\numberwithin{equation}{section}
\newcommand{\dotH}[1]{\dot{H}^{#1}_p}
\newcommand{\hilb}{\mathcal{H}}
\newcommand{\A}{\mathcal{A}}
\newtheorem{remark}{Remark}
\newtheorem{prop}{Proposition}
\newtheorem{thm}{Theorem}
\newtheorem{defn}{Definition}
\newcommand{\RR}{\mathbb{R}}
\newcommand{\NN}{\mathbb{N}}
\newcommand{\cost}{\mathcal{C}}
\newcommand{\Lag}{\mathcal{L}}
\definecolor{DarkGreen}{rgb}{0.1,0.35,0.1}
\title{Stabilising nontrivial solutions of the generalised Kuramoto-Sivashinsky equation using feedback and optimal control}
\author{
Susana N. Gomes, 
 Demetrios T. Papageorgiou,
 Grigorios A. Pavliotis \\
Department of Mathematics \\ Imperial College London \\ London, SW7 2AZ, UK.}
\begin{document}
\maketitle

\begin{abstract}
The problem of controlling and stabilising solutions to the Kuramoto-Sivashinsky equation is studied in this paper. We consider a generalised form of the equation in which the effects of an electric field and dispersion are included. Both the feedback and optimal control problems are studied. We prove that we can control arbitrary nontrivial steady states of the Kuramoto-Sivashinsky equation, including travelling wave solutions, using a finite number of point actuators. The number of point actuators needed is related to the number of unstable modes of the equation. Furthermore, the proposed control methodology is shown to be robust with respect to changing the parameters in the equation, e.g. the viscosity coefficient or the intensity of the electric field. We also study the problem of controlling solutions of coupled systems of Kuramoto-Sivashinsky equations. Possible applications to controlling thin film flows are discussed. Our rigorous results are supported by extensive numerical simulations.
\end{abstract}

\section{Introduction}
The Kuramoto-Sivashinsky (KS) equation on $L-$periodic domains
\begin{eqnarray}
u_t + u_{xxxx} + u_{xx} + uu_x = 0,\label{eq:KS}\\
u(x,t)=u(x+L,t),
\nonumber
\end{eqnarray}
is a paradigm evolution equation that has received considerable attention in recent years due to its wide applicability as well
as the rich and complex dynamics that it supports. The KS equation arises in many physical problems including
falling film flows \cite{Benney,SivMich,ShlangSiv,Hooper85}, two-fluid core-annular flows \cite{PMR90,CPS95},
flame front instabilities and reaction-diffusion-combustion dynamics \cite{Sivashinsky77,Sivashinsky83}, propagation of
concentration waves in chemical physics applications \cite{KT75,KT76,Kuramoto78}, and trapped ion mode dynamics in plasma
physics, \cite{Cohen}.
The KS equation \eqref{eq:KS} is one of the simplest partial differential equations (PDEs) that can 
produce complex dynamics including chaos - see
for example the numerical experiments in
\cite{HymanNico86,HNZ86,Jolly_et_al_90,Kevrekidis1990,Papageorgiou1991,Smyrlis1991,Wittenberg2002,WittenbergHolmes}.
Routes to chaos have been shown numerically to follow a Feigenbaum period-doubling cascade - see \cite{Smyrlis1991} where the
two universal Feigenbaum constants are also computed for the KS with three-digit accuracy. 
A detailed knowledge of the stationary, travelling and time-oscillatory solutions (typically chaotic) of \eqref{eq:KS} is significant in technological
applications that seek to enhance heat or mass transfer, for example. In this sense certain solutions are better than others
and a description of the solution phase space is a crucial step in constructing relevant control strategies that can access
unstable states, for instance, that may be desirable in applications.

In many studies equation
\eqref{eq:KS} is  scaled to $2\pi-$periodic domains according to the rescaling
\begin{equation}\label{rescaling}
 x^* = \frac{2\pi}{L}x, \quad t^* = \left(\frac{2\pi}{L}\right)^2 t, \quad u^* = \frac{L}{2\pi} u, \quad \delta^*= \frac{2\pi}{L}\delta,\quad \mu^*= \frac{2\pi}{L}\mu,
\end{equation}
to take the form (we drop the stars and use the same symbols for dependent and independent
variables) 
\begin{eqnarray}
u_t +\nu u_{xxxx} + u_{xx} + uu_x = 0,\label{eq:KS1}\\
u(x,t)=u(x+2\pi,t),\nonumber
\end{eqnarray}
where $\nu=(2\pi/L)^2$ is a positive parameter that decreases as the system size $L$ increases.
The mathematical interest in the KS equation (and related models - see below)
resides in the fact that it is a simple, one-dimensional equation exhibiting complex dynamics making it amenable to analysis
and also a good case study in the area of infinite-dimensional dynamical systems and their control.
The equation is of the active-dissipative type and instabilities are present depending on the value of $\nu$.
If $\nu>1$, it is well known \cite{Robinson2001,Sell2002,Tadmor1986,Temam1988} that the zero solution representing a flat film, is unique. However, when $\nu < 1$ the zero solution is linearly unstable and bifurcates into 
nonlinear states including steady states, travelling waves and solutions exhibiting
spatiotemporal chaos - the dynamical complexity increasing as $\nu$ decreases.
Some of these solutions are stable, and others are unstable \cite{Kevrekidis1990}.
In \cite{Frisch1986,Kevrekidis1990,Papageorgiou1993}, one can find studies of the
stability of steady states of the KS equation. 

There is an extensive literature on the behaviour of the solutions to the KS equation. Well posedness of solutions is studied, for instance, in \cite{Robinson2001,Tadmor1986,Temam1988}. 
It was proved in \cite{Constantin} that the long time dynamics of the KS equation are finite dimensional in the sense
that they are governed by a dynamical system of finite dimension which is at least as large as the number of
linearly unstable modes (this number scales with $L$ or $\nu^{-1/2}$ for \eqref{eq:KS} or \eqref{eq:KS1}, respectively);
these authors also proved that the solutions are attracted by a global attractor, a set of finite dimension.
Boundedness of solutions for general initial conditions was proved independently and by using distinct methods by
\cite{Collet1993,Goodman,Ilyashenko}. These studies also focussed on finding bounds for the dimension of the global attractor
by estimating $L^2-$norms of the solutions, starting with the odd-parity results of \cite{Nicolaenko1985} and those for general
initial data by \cite{Collet1993} along with more recent improvements in \cite{Bronski} and  \cite{Otto2009}. 
Analyticity of solutions in a strip in the complex plane around the real axis was also proved in \cite{Collet1993b}
and \cite{APS13} using different methods.

In the context of falling film flows there have been several studies to extend the KS equation by including additional
physical effects. Of most interest to the present study are the derivations in \cite{Tseluiko2006,Tseluiko2010} for film flow over
flat walls in the presence of electric fields applied perpendicular to the undisturbed interface. The resulting equation,
that also incorporates the effects of dispersion, is a generalisation of \eqref{eq:KS1} and takes the form
\begin{eqnarray}
u_t + \nu u_{xxxx} + \mu\hilb[u_{xxx}] + \delta u_{xxx} + u_{xx} + u u_x =0,\label{eq:KS2}\\
u(x,t)=u(x+2\pi,t),\qquad u(x,0)=u_0(x),\nonumber
\end{eqnarray}
where $\mu\ge 0$ measures the strength of the applied electric field and the parameter $\delta$ measures dispersive effects.
The linear operator $\hilb$ is the Hilbert transform operator
and represents flow destabilisation due to the electric field. 
On $2\pi-$periodic domains the definition of $\hilb$ is
\begin{eqnarray}\label{hilbert} \hilb[u](x) = \frac{1}{2\pi}PV\int_0^{2\pi}u(\xi)\cot\left(\frac{x - \xi}{2}\right)\,d\xi,
\label{eq:Hilbert}\end{eqnarray}
where $PV$ stands for the Cauchy principal value integral.
In the model analysed, the electric field needs to be found
by solving a harmonic problem above the film and calculating the Dirichlet to Neumann map of the solution to construct
the Maxwell stresses that interact with the hydrodynamics - see \cite{Tseluiko2006,Tseluiko2010} for the details.
In fact, for the linearised problem the eigenvalues $\lambda$
corresponding to the eigenfunctions $\exp(ikx)$ are
\begin{eqnarray}
\lambda=k^2+\mu k^2|k|-\nu k^4+i\delta k^3,\label{eq:lambda}
\end{eqnarray}
showing that the presence of the electric field destabilises the flow and increases the number of linearly unstable modes.
Note that instability is possible if $|k|<k_c=\frac{\mu + \sqrt{\mu^2+4\nu}}{2\nu}$, and so there are $2l+1$ unstable
modes where $l$ is the integer part of $k_c$ (this fact is used in Proposition \ref{prop1} later).
This additional destabilisation is important in what follows and makes the control problem more challenging.
The modified equation \eqref{eq:KS2} in the absence of dispersion ($\delta=0$),
has a similar dynamical behaviour to the KS equation \eqref{eq:KS1} but with chaotic dynamics appearing at higher values
of $\nu$ as $\mu$ increases. In fact, boundedness of solutions and an estimate of the dimension of the global attractor
have been proved in \cite{Tseluiko2007} for a class of more general operators whose symbols in Fourier space are 
such that the electric field term in \eqref{eq:lambda} is $|k|^\alpha$ with $3\le\alpha<4$.
On the other hand, in the absence of an electric field but with dispersion present, it is established that dispersion
acts to regularise the dynamics (even chaotic ones) into nonlinear travelling wave pulses - see \cite{Kawahara1983,Kawahara1985,
Akrivis2012} as well as \cite{Marc_IMA,Dmitri_IMA} for a weak interaction theory between pulses that are sufficiently separated.
 
More recently, Christofides \emph{et al.} \cite{Antoniades2001,Armaou2000,Christofides1998,Christofides2000a,Christofides2000,Dubljevic2010,Lou2003} 
showed how to stabilise the zero solution of the Kuramoto-Sivashinsky equation by using state feedback controls. 
They also proved that using linear feedback controls, it is possible to stabilise the zero solution using only 5 point actuated controls.
In addition they prove that the stabilisation is possible using only a certain number of observations of the solution instead of full
knowledge of the solution at all times, as long as the number of observations is equal to or exceeds the number of unstable modes. 
In further work utilising
nonlinear feedback controls \cite{Antoniades2001,Christofides2000a}, Christofides and co-workers formulated optimisation techniques 
and computed possible optimal states by analysing a large number of runs; a proof of the
existence of these optimal positions was not given, however.

In this work we use linear feedback controls and techniques similar to those in \cite{Armaou2000,Christofides2000a}, 
to stabilise non-uniform unstable steady states of generalised versions of the KS equation at small values
of $\nu$ that have not been attempted yet. The mathematical complication is due to the increase of the number
of linearly unstable modes as $\nu$ decreases and $\mu$ increases - see \eqref{eq:lambda}. We achieve stabilisation
of non-uniform states
by stabilising the zero solution of a modified PDE that is satisfied by a perturbation to the desired steady state. The resulting
equation to be controlled becomes
\[
u_t + \nu u_{xxxx} + \mu\hilb[u_{xxx}] + \delta u_{xxx} + u_{xx} + u u_x = \displaystyle{\sum_{i=1}^m b_i(x)f_i(t).}
\]
All our results are still valid in this as well as other cases, as long as the linear operator of the PDE has a self adjoint part, 
a well defined separation between stable and unstable modes,  and a bounded nonlinearity $\mathcal{N}(u)$ in an appropriate functional space. Such conditions are fairly generic in
physically derived systems and pose little restriction to our methodology.

In applications there may be some uncertainties in the estimation of the parameters of the equation, for example, if the intensity of the electric field or the dispersion
parameters are not known exactly. 
It is important, therefore, that the controls applied are robust, that is they still work even when these uncertainties are present. We use results from control theory \cite{Kautsky1985} to prove analytically that the controls are robust to uncertainties in $\nu, \, \delta$ and $\mu$, as long as the error in the prediction of the parameters is small enough, and present numerical simulations that demonstrate this point.

A natural question to address after robust stabilisation of the zero solution to the modified PDE is achieved, is whether this can be done in an optimal manner.
By this we mean stabilisation while minimising a cost functional that measures how close we are to the desired solution and how much energy we are spending with the controls.
This cost functional is of the form
\begin{eqnarray}
\cost\left(u, F\right) = \displaystyle{\frac{1}{2}\int_0^T \| u(\cdot,t)-\bar{u}\|^2 \ dt + \frac{1}{2} \| u(\cdot, T) - \bar{u}\|^2 } \displaystyle{+ \frac{\gamma}{2} \int_0^T \sum_{i=1}^m f_i(t)^2 \ dt},\label{eq:cost}
\end{eqnarray}
where $T$ is the final time of integration and $\bar{u}$ is the desired steady-state we are controlling.
As our control variables  we will consider the positions of the control actuators, following Lou and Christofides \cite{Lou2003}.
Note that the
presence of a nonlinearity in the PDE makes the problem non-convex and therefore we do not expect an optimal control to be unique. 
However, we only wish to prove the existence of an optimal control and to find computationally such optimal controls.

The methodology developed and implemented here can also be applied to systems of nonlinear coupled PDEs.
Of particular interest are systems of coupled Kuramoto-Sivashinsky equations that arise in applications
to interfacial fluid dynamics problems. Such equations were derived systematically using asymptotic methods in~\cite{Papaefthymiou2013}
to model the nonlinear stability of three immiscible viscous fluids of different properties flowing in a stratified
arrangement in a plane channel under the action of gravity and/or a driving pressure gradient.
The ensuing dynamics is very rich and in fact instabilities can emerge even in the absence of inertia,
unlike analogous two-fluid flows. Coupled nonlinear systems are mathematically significantly more challenging than scalar PDEs
since analytical results on global existence and estimates of solution norms, for example, are poorly understood.
Detailed computational results into the complexity of the solutions (especially their zero diffusion limits) 
of such coupled systems of KS equations can be found in \cite{Papaefthymiou2015}.
In the present study we consider the problem of feedback and of optimal control when the equations are coupled through the second
derivatives alone. This is a special case but arises in the derivation of the equations, see~\cite{Papaefthymiou2013}; more generally the nonlinear terms are also coupled and can cause
hyperbolic-elliptic transitions by supporting complex eigenvalues of the nonlinear flux functions, see \cite{Papaefthymiou2015}.
We find that solutions to such a system of ocupled PDEs can also be controlled through a linear feedback loop and also optimally.

\begin{figure}[h!]
 \centering
	\subfloat[$\mu = 0$]{\includegraphics[width=0.5\linewidth]{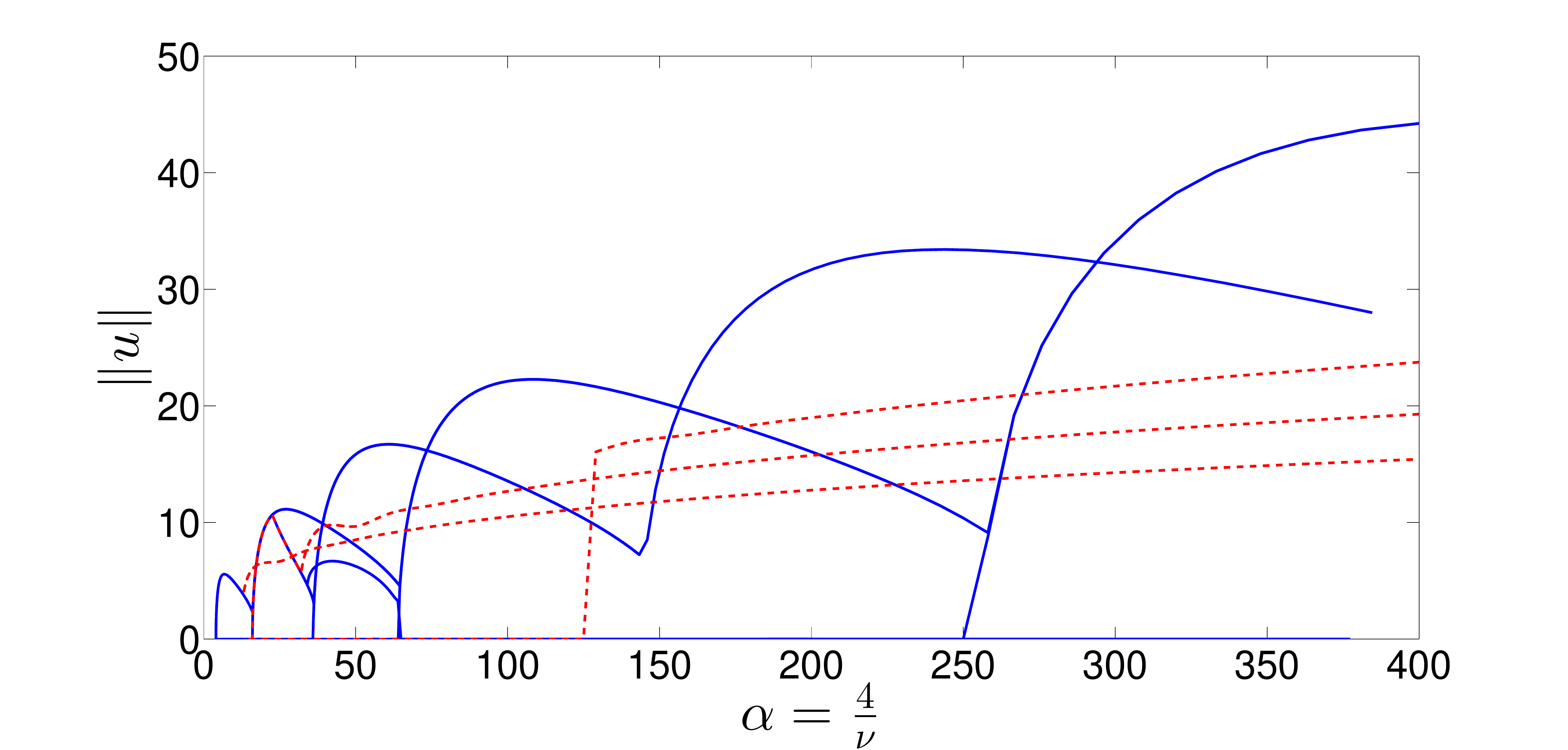}\label{fig:bifdiag3}}
	\subfloat[$\mu = 0.2$]{\includegraphics[width=0.5\linewidth]{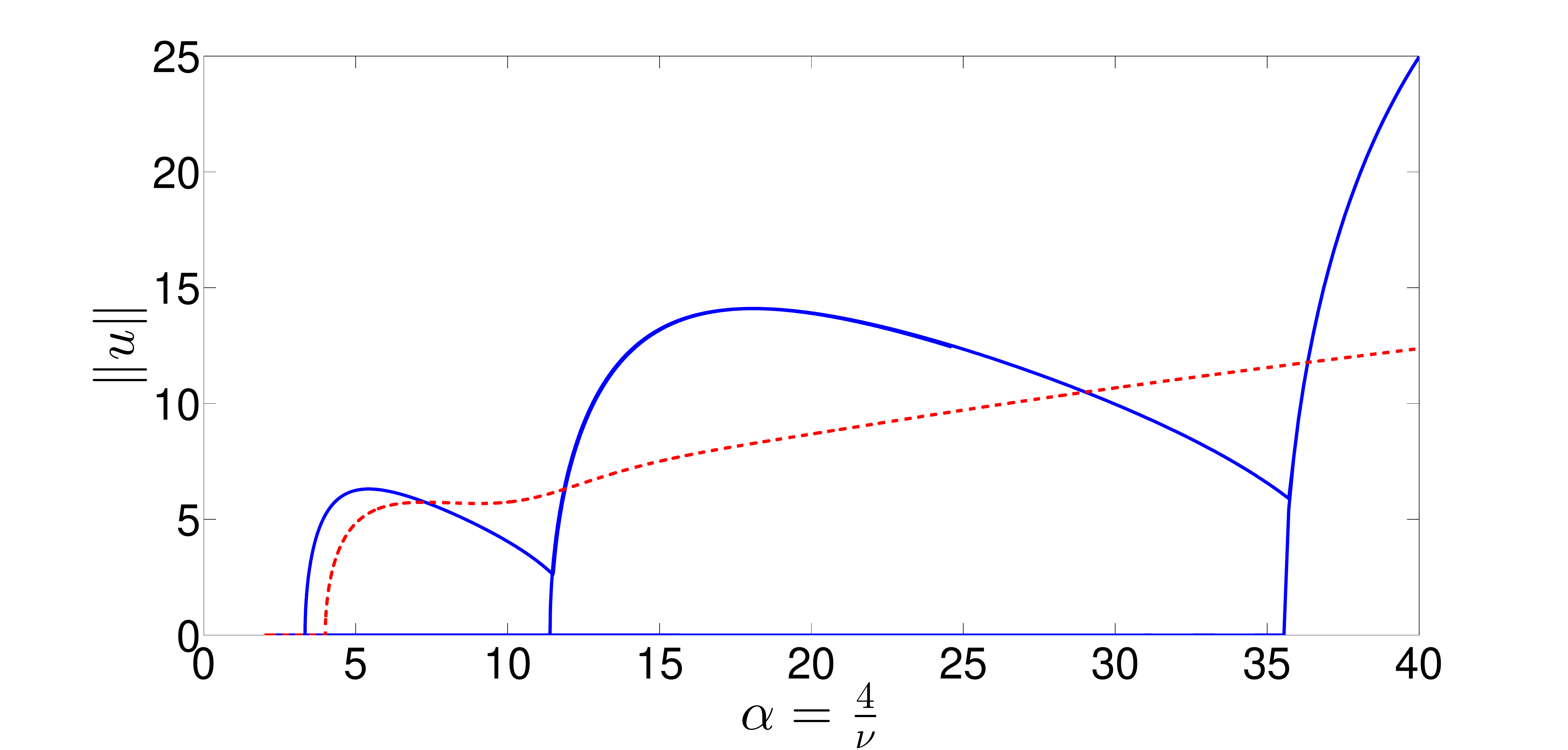}\label{fig:bifdiag4}}

	\subfloat[$\mu = 0.5$]{\includegraphics[width=0.5\linewidth]{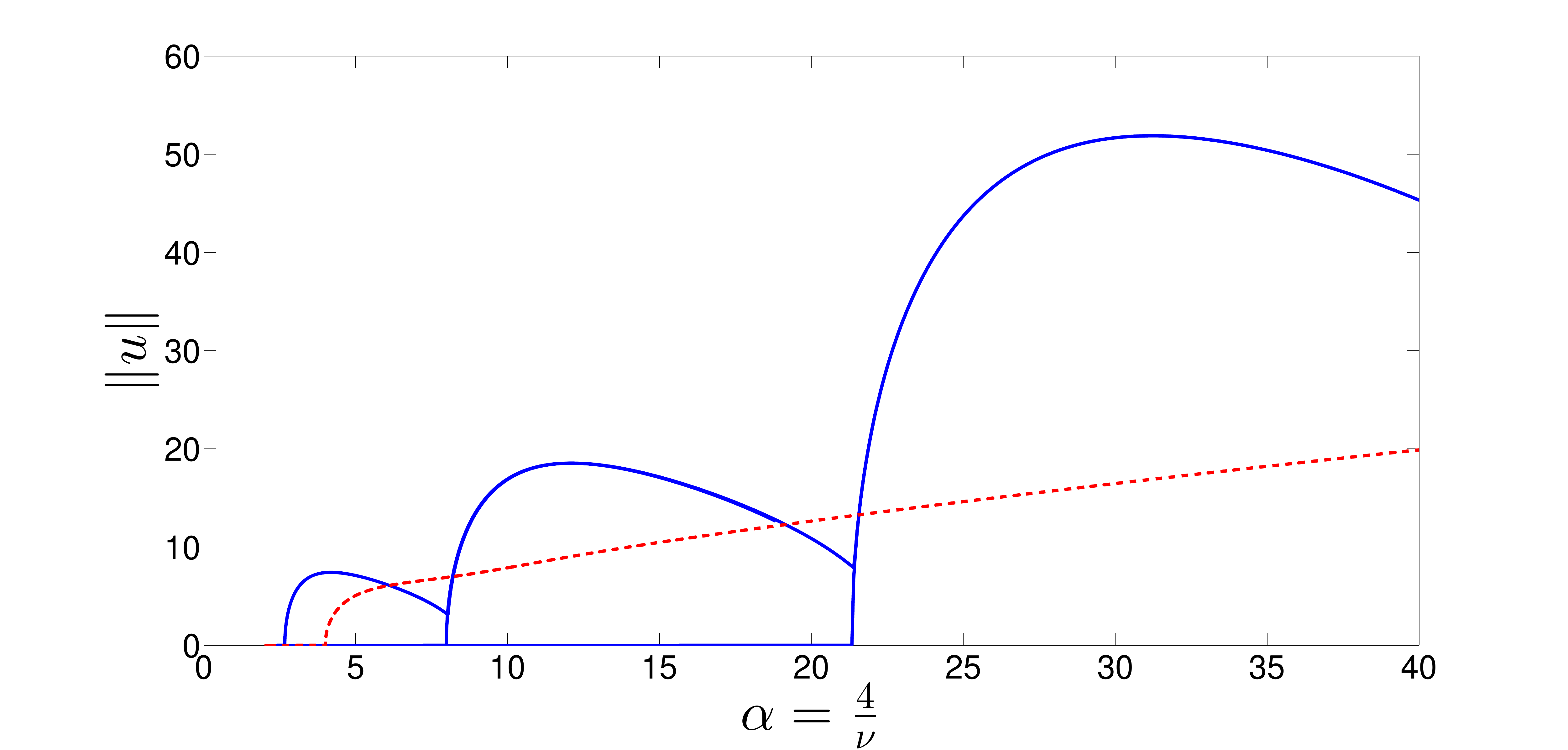}\label{fig:bifdiag5}}
	\subfloat[$\mu = 1$]{\includegraphics[width=0.5\linewidth]{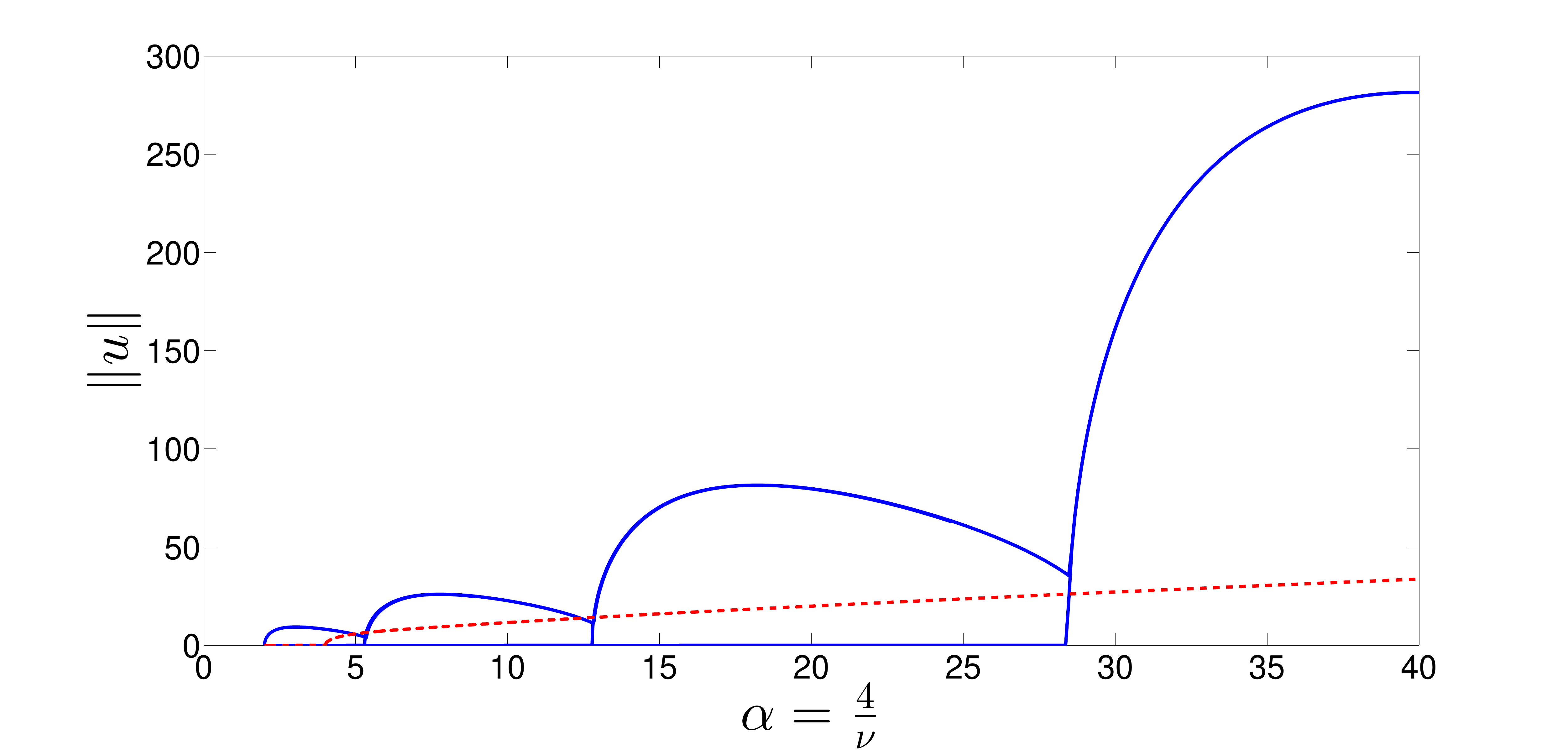}\label{fig:bifdiag6}}
	 \caption{Bifurcation diagram of the $L^2$-norm of the steady state solutions (solid curves - blue online) 
	 and travelling wave solutions (dashed curves - red dashed online) to the gKS equation \eqref{eq:KS2} (with $\delta=0$) in the presence of an electric field for $0.1\le\nu\le 1$, and $\mu = 0$ (\ref{fig:bifdiag3}), $\mu = 0.2$ (\ref{fig:bifdiag4}), $\mu = 0.5$ (\ref{fig:bifdiag5}), and $\mu = 1$(\ref{fig:bifdiag6}).}
	 \label{fig:mu>0}
 \end{figure}

The rest of the paper is organised as follows. In Section~\ref{sec:stabilize} we prove rigorously that nontrivial solutions to the generalised Kuramoto-Sivashinsky equation can be stabilised and analyse the robustness of the proposed controls to variations in the parameters of the equation and in Section~\ref{sec:Numerical} we present numerical experiments that confirm our results. In Section~\ref{sec:optimization} we prove that there exist optimal distributed controls for the KS equation, construct an algorithm to optimise the placement of the control actuators and show our numerical results. Finally, in Section~\ref{sec:coupled} we extend our results to a system of coupled KS equations. We discuss our results in Section~\ref{sec:conclusion}.

%
%

\section{Stabilisation of nontrivial unstable steady states using linear feedback controls}\label{sec:stabilize}

Our goal is to stabilise non-uniform unstable steady states or steady-state travelling wave solutions of equation \eqref{eq:KS2}. 
For the theoretical analysis of the feedback control problem for the KS equation we need $L^\infty$ bounds on the solution and its derivatives. 
To establish such estimates we will use well known $L^2$ bounds, together with the Sobolev embedding theorem.
Optimal estimates for the solution of the KS equation \eqref{eq:KS} in $(0,L)$ were obtained by Otto~\cite{Otto2009}, and
for the re-scaled $2\pi-$periodic KS equation \eqref{eq:KS1} these estimates can be expressed in terms of $\nu= \left(\frac{2\pi}{L}\right)^2$ to find
\begin{subequations}\label{L2estimates}
\begin{align}
\label{solution} \limsup_{t\rightarrow\infty} \|u(\cdot,t)\| & \leq \mathcal{O}(\nu^{-1/6}), \\
\label{1stderivative} \limsup_{t\rightarrow\infty} \|u_x(\cdot,t)\| & \leq \mathcal{O}\left(\nu^{1/2}\ln ^{5/3}\left(\nu^{-1/2}\right)\right), \\
\label{2ndderivative} \limsup_{t\rightarrow\infty} \|u_{xx}(\cdot,t)\| & \leq \mathcal{O}\left(\nu\ln ^{5/3}\left(\nu^{-1/2}\right)\right),
\end{align}	
\end{subequations}
where $\|\cdot\|=(\int_0^{2\pi} (\cdot)^2 \ dx)^{1/2}$ denotes the $L^2$-norm of the solution.
For the generalised equation \eqref{eq:KS2} but in the absence of dispersion ($\delta=0$),
Tseluiko \& Papageorgiou \cite{Tseluiko2007} use the backround flow method to obtain similar estimates in the presence of an electric field.  Their estimates are of the form
\begin{equation}\label{estimatesHT}
\|u\| \leq \left(\|u_0\| + \|\varphi\|\right)e^{-Dt} + C(\nu,\mu) + \|\varphi\|,
\end{equation}
where $C$ and $D$ are constants depending on $\mu$ and $\nu$, and $\varphi$ is a constructed function  with finite $L^2$-norm (we do not need to give it here). 
They also proved that for $u_0 \in \dotH{1}(0,2\pi)$, the first and second derivatives of the solution are bounded, and therefore, $u \in \dotH{2}(0,2\pi)$, where $\dotH{s}$ is the Sobolev space of $s-$times differentiable functions that are periodic and have zero mean.

It is also important to remark that in the case of the generalised KS
equation \eqref{eq:KS2}, with both $\delta$ and $\mu$ non-zero, it was proved in \cite{Frankel2008a,Frankel2008} that the $L^2$ norm of the solution is also bounded. 
In fact
\begin{equation}\label{estimatesDispersion}
 \limsup_{t\rightarrow\infty} \|u(\cdot,t)\| \leq \left\{\begin{array}{lc}
 \mathcal{O}(\nu^{-17/4}), &\textrm{ if } \nu < \nu_0,\\
C , &\textrm{ if } \nu \geq \nu_0,
\end{array}
\right.
\end{equation}
where $\nu_0$ depends on the symbol of the linear operator. Note that these are not optimal bounds.
 The authors also prove boundedness in $L^2$ of spatial derivatives of $u$ up to order $4$.
The estimates \eqref{L2estimates}-\eqref{estimatesDispersion} imply (by use of the Sobolev embedding theorem) that there exist constants $C_1, \, C_2$ that depend only on $\nu$ and $\mu$ such that
\begin{equation} \label{inftyestimate} \|u\|_{\infty} \leq C_1\|u\|_{H^2}, \quad \|u_x\|_{\infty} \leq C_2\|u_x\|_{H^1}.\end{equation}

The controlled 
generalised KS equation can now be introduced and will form the basis of our analysis and computations. 
This consists of a forced version of \eqref{eq:KS2} and reads
\begin{equation}\label{controlledGKS}
\left\{\begin{array}{rclr}
u_t + \nu u_{xxxx} + \mu\hilb[u_{xxx}] + \delta u_{xxx} + u_{xx} + u u_x &=& \displaystyle{\sum_{i=1}^m b_i(x)f_i(t),} &  x\in (0,2\pi), \, t>0,\\
u(x,0) &=& u_0(x), &  x\in (0,2\pi),\\
\displaystyle{\frac{\partial^j u}{\partial x^j}(x+2\pi,t)} &=& \displaystyle{\frac{\partial^j u}{\partial x^j}(x,t), } &  x\in (0,2\pi),\, t>0, \\
f_i(t) & \in & L^2(0,T). &
\end{array}\right.\end{equation}
We assume that the initial condition satisfies $u_0 \in \dotH{2}(0,2\pi)$, $m$ denotes the number of controls, $b_i(x)$, $i = 1,\dots,m$ are the control actuator functions and $f_i(t)$, $i = 1,\dots,m$ are the controls. We will use point actuator functions, which means that the functions $b_i(x)$ are delta functions centered at positions $x_i$, i.e. $b_i(x) = \delta(x-x_i)$, 
or a smooth approximation of such delta functions.

We use an argument similar to \cite{Armaou2000a,Christofides1998,Christofides2000} to prove that it is possible to stabilise nontrivial steady states of the generalised KS
equation~\eqref{eq:KS2}.
Using the Galerkin representation of $u$,
\begin{equation}\label{galerkintruncation}
u(x,t) =  \sum_{n = 1}^{\infty} u_n^s(t)\sin(nx) + \sum_{n = 0}^{\infty}u_n^c(t)\cos(nx),
\end{equation}
substituting into~\eqref{controlledGKS}, and taking the inner product with the functions $\frac{1}{\sqrt{2\pi}}$, $\frac{\sin(nx)}{\sqrt{\pi}}$ and $\frac{\cos(nx)}{\sqrt{\pi}}$, $n = 1,\dots,\infty$, we obtain the following infinite system of ODEs (dots denote time derivatives):
\begin{equation}\label{ODEsystem}
\left\{
\begin{array}{rclr}
\displaystyle \dot{u}_n^s &=& \displaystyle \left(-\nu n^4 +\mu n^3 +  n^2\right) u_n^s +  \delta n^3 u_n^c + g_n^s + \sum_{i = 1}^m b_{in}^sf_i(t) & n = 1,\dots,\infty \\
\displaystyle \dot{u}_n^c &=& \displaystyle \left(-\nu n^4 +\mu n^3 +  n^2\right) u_n^c  - \delta n^3 u_n^s+ g_n^c + \sum_{i = 1}^m b_{in}^cf_i(t) & n = 0,\dots,\infty 
\end{array}\right.
\end{equation}
where $b_{in}^s = \int_0^{2\pi}b_i(x)\sin(nx)dx$ and $b_{in}^c = \int_0^{2\pi}b_i(x)\cos(nx)dx$. The nonlinearities $g_n^s$ and
$g_n^c$ are given by \cite{Akrivis2011}
\[ g_n^s = \frac{n}{4\sqrt{\pi}} \sum_{j + k = n} (u^c_ju^c_k - u^s_ju^s_k) + \frac{n}{2\sqrt{\pi}}\sum_{j-k = n}(u^c_ju^c_k + u^s_ju^s_k),\quad n = 1,\dots,\infty,\]

\[ g_n^c = -\frac{n}{2\sqrt{\pi}} \sum_{j + k = n} u^c_ju^s_k  + \frac{n}{2\sqrt{\pi}}\sum_{j-k = n}(u^c_ju^s_k - u^s_ju^c_k), \quad n = 0,\dots,\infty.\]
In deriving the system \eqref{ODEsystem} we used $\hilb[\sin(x)](x) = -\cos(x)$ and $\hilb[\cos(x)](x) = \sin(x)$; these formulas
can be derived using contour integrations in the complex plane, for example.

We now define
$$ z^u = \left[\begin{array}{c}z_u^u \\ z_s^u \end{array}\right], $$ 
where 
$$z_u^u =  \left[u_0^c \ u_1^s \ u_1^c \cdots u_l^s \ u_l^c\right]^T,\qquad
z_s^u =  \left[u_{l+1}^s \ u_{l+1}^c \cdots \right]^T,
$$
where $z_u^u$ contains the coefficients of the (slow) unstable modes and  $z_s^u$ those of the (fast) stable modes.
In addition,
$$G =  \left[0 \ g_1^s \ g_1^c \ g_2^s \ g_2^c \ \cdots\right]^T,\quad 
D = \operatorname{diag}\left(0, \delta, -\delta, \cdots, \delta n^3, -\delta n^3, \cdots\right),\quad
F = \left[f_1(t) \ f_2(t) \ \cdots \ f_m(t)\right]^T.$$ 
Furthermore, we introduce the notation
\begin{equation}\label{A-B} A = \left[\begin{array}{cc} A_u & 0 \\ 0 & A_s \end{array}\right] \quad \textrm{ and } \quad B = \left[ \begin{array}{c} B_u \\  B_s\end{array}\right], \end{equation}
where $$ A_u = \operatorname{diag}(0,-\nu +\mu + 1, -\nu + \mu + 1, \cdots, -l^4\nu + \mu l^3 + l^2,-l^4\nu + \mu l^3 + l^2),$$
$$A_s = \operatorname{diag}(-(l+1)^4\nu + \mu (l+1)^3 +(l+1)^2,-(l+1)^4\nu + \mu (l+1)^3 + (l+1)^2,\cdots),$$
and
\begin{equation}\label{B}
B_u = \left[\begin{array}{cccc}
b_{10}^c & b_{20}^c & \cdots & b_{m0}^c \\
b_{11}^s & b_{21}^s & \cdots & b_{m1}^s \\
b_{11}^c & b_{21}^c & \cdots & b_{m1}^c \\
\vdots & \vdots & \cdots & \vdots\\
b_{1l}^s & b_{2l}^c & \cdots & b_{ml}^c \\
b_{1l}^s & b_{2l}^s & \cdots & b_{ml}^s 
\end{array}\right], \quad \quad B_s = \left[\begin{array}{cccc}
b_{1(l+1)}^s & b_{2(l+1)}^s & \cdots & b_{m(l+1)}^s \\
b_{1(l+1)}^c & b_{2(l+1)}^c & \cdots & b_{m(l+1)}^c \\
\vdots & \vdots & \cdots & \vdots
\end{array}\right].\end{equation}
We can rewrite the infinite dimensional system of ODEs~\eqref{ODEsystem} as
\begin{equation}
\dot{z}^u = Az^u + Dz^u + G + BF.
\end{equation}
We have the following result.
\begin{prop}\label{prop1}
Let $\bar{u}$ be a linearly unstable steady state or travelling wave solution of \eqref{eq:KS2} and let $2l+1$ be the number of unstable eigenvalues of the system
\begin{equation}\label{linearized} u_t = -\nu u_{xxxx} -  \mu\hilb[u_{xxx}] - u_{xx},\end{equation} $\textrm{i.e.},\,  l+1 > \frac{\mu + \sqrt{\mu^2+4\nu}}{2\nu} > l$. If \  $m =2l +1$ and  there exists a matrix $K\in\RR^{m\times m}$ such that all of the eigenvalues of the matrix $A_u + B_uK$ have negative real part, then the state feedback controls \begin{equation}\label{control} [f_1 \ \cdots \ f_m]^T = F = K(z^u_u-z^{\bar{u}}_u)\end{equation} stabilise $\bar{u}$.
\end{prop}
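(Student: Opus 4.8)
The plan is to recast the assertion as local asymptotic stability of the zero equilibrium for the equation governing the perturbation $w=u-\bar u$, and to produce a Lyapunov functional for it. If $\bar u$ is a travelling wave of speed $c$, I first move to the co-moving coordinate $\xi=x-ct$, in which $\bar u$ becomes a fixed profile and the only new term $-c\,w_\xi$ has purely imaginary Fourier symbol, so it joins the dispersive part and is energy-neutral; the now time-dependent actuator positions enter as before, and henceforth $\bar u$ may be assumed a genuine steady state. Subtracting the (unforced) equation for $\bar u$ from \eqref{controlledGKS}, $w$ solves the controlled equation with the same linear operator, control term $\sum_i b_i f_i(t)$, and shifted nonlinearity $\mathcal{N}(\bar u+w)-\mathcal{N}(\bar u)=-(\bar u\,w)_x-w\,w_x$. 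Expanding $w$ in the Fourier basis as in \eqref{galerkintruncation}-\eqref{ODEsystem} and splitting $w=(w_u,w_s)$ into the $2l+1$ slow (formerly unstable) modes and the stable tail gives
\begin{align*}
\dot w_u &= M_u w_u + \widetilde G_u(w),\qquad M_u:=A_u+D_u+B_uK,\\
\dot w_s &= A_s w_s + D_s w_s + B_s K w_u + \widetilde G_s(w),
\end{align*}
where $\widetilde G=(\widetilde G_u,\widetilde G_s)$ are the Fourier coefficients of $-(\bar u\,w)_x-w\,w_x$ and $D_u,D_s$ the skew-symmetric blocks of $D$.

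Next I collect the facts that should make a Lyapunov estimate close. First, since $A_u$ has no strictly stable eigenvalue, the existence of a stabilising $K$ for $A_u+B_uK$ is equivalent to controllability of $(A_u,B_u)$, which is preserved on adding the known skew term $D_u$; so one may fix $K$ with $M_u$ Hurwitz ($M_u=A_u+B_uK$ when $\delta=0$), giving a symmetric positive definite $P_u$ with $M_u^{T}P_u+P_uM_u=-Q_u$ and $Q_u$ symmetric positive definite. Second, on the stable tail the symbols $-\nu n^4+\mu n^3+n^2$ are $\le -c_s<0$ for every $n\ge l+1$ — exactly what $l+1>\tfrac{\mu+\sqrt{\mu^2+4\nu}}{2\nu}$ provides — and decay like $-\nu n^4$, so $A_s$ generates an exponentially decaying analytic semigroup with $-\langle A_s w_s,w_s\rangle_{\dot H^2}\gtrsim \nu\|w_s\|_{\dot H^4}^2+c_s\|w_s\|_{\dot H^2}^2$ (modulo finitely many mid-range modes handled directly), while the Fourier multiplier $D_s$ satisfies $\langle D_s w_s,w_s\rangle_{\dot H^2}=0$. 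Third, by the a priori estimates \eqref{L2estimates}-\eqref{estimatesDispersion}, the Sobolev inequalities \eqref{inftyestimate}, and smoothness of steady states, the $C^k$ norms of $\bar u$ are controlled by constants depending on $\nu,\mu$ (and $\|\bar u_x\|_\infty$ is in fact small as $\nu\to0$ in the pure KS case, by \eqref{1stderivative}-\eqref{2ndderivative}); consequently, for $w$ small in $\dotH{2}$, $\widetilde G$ is locally Lipschitz with $\|\widetilde G(w)\|_{\dot H^2}$ bounded, after the usual Kuramoto-Sivashinsky interpolation, by a small multiple of $\|w_s\|_{\dot H^4}$ plus $O(\|w\|_{\dotH{2}})$ plus higher-order terms. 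Fourth, taking the point actuators to be the smooth approximations of $\delta(x-x_i)$ allowed in \eqref{controlledGKS} makes the $b_i$ lie in $H^2$, so $B,B_s$ are bounded and $\|B_sKw_u\|_{\dot H^2}\lesssim\|w_u\|$.

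I would then use the composite functional $V(w)=w_u^{T}P_uw_u+\kappa\|w_s\|_{\dot H^2}^2$ with $\kappa>0$ small. Differentiating along trajectories, the $w_u$-block contributes $-w_u^{T}Q_uw_u+2w_u^{T}P_u\widetilde G_u$; the $w_s$-block contributes $-\kappa\big(\tfrac{\nu}{2}\|w_s\|_{\dot H^4}^2+c_s\|w_s\|_{\dot H^2}^2\big)$ from the dissipation (the quartic term absorbing the lower-order linear pieces by interpolation), the coupling $2\kappa\langle w_s,B_sKw_u\rangle_{\dot H^2}$, and $2\kappa\langle w_s,\widetilde G_s\rangle_{\dot H^2}$; the skew terms vanish. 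Treating the coupling term by Young's inequality with $\kappa$ small enough that $\kappa\|P_sB_sK\|^2\ll\lambda_{\min}(Q_u)$, absorbing the $\|w_s\|_{\dot H^4}$ part of $\widetilde G_s$ into the quartic dissipation, and restricting to a ball $\{\|w\|_{\dotH{2}}\le r\}$ so the quadratic remainders are $o(V)$, one arrives at $\dot V\le -c_0 V$ there. With local well-posedness of the controlled equation in $\dotH{2}$, this forces $w(t)\to 0$ exponentially whenever $\|w(0)\|_{\dotH{2}}\le r$; equivalently, the feedback \eqref{control} renders $\bar u$ locally asymptotically stable, as claimed.

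I expect the main obstacle to be the advective coupling $-(\bar u\,w)_x$ between the stabilised slow modes and the infinite stable tail: since $\bar u$ is an $O(1)$ profile at the small $\nu$ of interest, this term is not a small perturbation of the linear dynamics. To control it one must (i) use the integration-by-parts identity, which replaces $\bar u\,w_x$ tested against $w$ by $\bar u_x\,w^2$, so that only $\|\bar u_x\|_\infty$ — controlled, and small for small $\nu$ by \eqref{1stderivative}-\eqref{2ndderivative} — multiplies $\|w\|^2$; (ii) exploit the strong $\dot H^4$ dissipation of $A_s$ to soak up the derivatives the nonlinearity costs on the stable tail, in the standard Kuramoto-Sivashinsky fashion; and, when $\bar u$ is not small, (iii) use the freedom — available because a stabilising $K$ exists only if $(A_u,B_u)$ is controllable — to place the closed-loop spectrum of $M_u$ deep enough that $\lambda_{\min}(Q_u)$ dominates the residual $O(1)$ nonlinear coefficients, which quietly strengthens the bare ``Hurwitz'' hypothesis to a quantitative gap. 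A secondary but genuine point is the fourth item above: bare $\delta$-actuators are not in $L^2$, so without the smooth regularisation the coupling term $B_sKw_u$ would not be a well-defined element of the state space.
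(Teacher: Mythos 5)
Your overall strategy --- a slow/fast splitting with a composite Lyapunov functional $w_u^{T}P_uw_u+\kappa\|w_s\|_{\dotH{2}}^2$ and a local (small-ball) closure --- is genuinely different from the paper's, but as written it has a gap precisely at the point you flag as the main obstacle, and the remedies you list do not close it. The identity in your item (i), $-\int_0^{2\pi}(\bar u\,w)_x w\,dx=-\tfrac12\int_0^{2\pi}\bar u_x w^2\,dx$, is available only when the \emph{whole} equation is paired against $w$ in unweighted $L^2$; once the slow block is weighted by $P_u$ and the tail is measured in $\dotH{2}$, this cancellation is destroyed and $(\bar u\,w)_x$ re-enters as a two-sided linear coupling between $w_u$ and $w_s$ with $O(1)$ coefficients. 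Absorbing it then requires a quantitative small-gain condition, and your item (iii) --- place the closed-loop spectrum ``deep enough'' --- is asserted rather than proved: with only $m$ actuators, deep placement inflates $\|K\|$ (hence the coupling $\|B_sK\|$) and the eigenvector conditioning that controls $P_u$ (cf.\ Proposition~\ref{prop:bounds}), and in any case it silently replaces the hypothesis of the proposition, which concerns a \emph{given} $K$ making $A_u+B_uK$ Hurwitz, by a stronger quantitative one. (The claimed smallness of $\|\bar u_x\|_\infty$ as $\nu\to0$ is likewise not established for the nontrivial states being stabilised; the bounds \eqref{L2estimates} are limsup bounds for solutions, and the paper only uses finiteness of $\inf\bar u_x$.) The paper avoids all of this with a single unweighted $L^2$ energy estimate for \eqref{pertGKS}: testing with $v$ makes the dispersion term and the Burgers term vanish identically, the term $(\bar u v)_x$ contributes exactly $-\tfrac12\int\bar u_x v^2\le-\tfrac{\inf\bar u_x}{2}\|v\|^2$, and the closed-loop linear-plus-feedback operator is required to satisfy $(\A v,v)\le -a\|v\|^2$ with $2a+\inf\bar u_x\ge0$. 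Your item (i) is exactly this step, but invoking it amounts to abandoning the composite $P_u$/$\dotH{2}$ functional and reverting to the paper's plain energy argument, so the two halves of your plan are not compatible as written.

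Two further steps would fail as stated. For travelling waves, passing to the co-moving frame makes $\bar u$ stationary but moves the actuators: $b_i$ becomes $\delta\bigl(\xi-(x_i-ct)\bigr)$, so $B_u,B_s$ are time-periodic and pointwise Hurwitzness of $A_u+B_u(t)K$ does not imply stability of the resulting time-varying closed loop; the paper instead stays in the lab frame and feeds back against the time-dependent reference $z^{\bar u}(t)$ built from \eqref{conversion}, its $L^2$ estimate being unaffected because only the translation-invariant quantity $\inf\bar u_x$ enters. Similarly, folding the dispersion into the slow block via $M_u=A_u+D_u+B_uK$ is not covered by the hypothesis: adding a skew block to a non-normal Hurwitz matrix can push eigenvalues into the right half-plane, so the given $K$ need not render $M_u$ Hurwitz; the paper never needs this, since $\int_0^{2\pi} v_{xxx}v\,dx=0$ removes dispersion from the energy estimate altogether.
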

\begin{proof}
Let $u =  \bar{u} + v$ be a solution to \eqref{eq:KS2}. Substituting into \eqref{eq:KS2} and using the fact
that $\bar{u}$ is a steady state or travelling wave solution of \eqref{controlledGKS}, we obtain the following PDE for the perturbation $v$,
\begin{equation}\label{pertGKS}
v_t + \nu v_{xxxx} + \mu\hilb[v_{xxx}] + \delta v_{xxx} + v_{xx} + vv_x + (\bar{u}v)_x = 0,
\end{equation}
and in controlled form we have
\begin{equation}\label{pertGKScontrol}
v_t + \nu v_{xxxx}  + \mu\hilb[v_{xxx}] + \delta v_{xxx} + v_{xx} + vv_x + (\bar{u}v)_x = \sum_{i = 1}^m b_i(x)f_i(t).
\end{equation}
First we will prove that the given controls stabilise the zero solution of
\begin{equation}\label{linearizedGKS}
v_t = - \nu  v_{xxxx} + \mu\hilb[v_{xxx}] + v_{xx}.
\end{equation}
Note that the dispersion term does not affect instability and it is not necessary to include it in this part of the analysis. After applying a Galerkin truncation and the controls given by~\eqref{control}, we obtain
\begin{equation}\label{expstab}
\dot{z}^v = \left[\begin{array}{cc} A_u + B_u K & 0 \\ B_s K & A_s \end{array}\right]z^v = Cz^v.
\end{equation}
Since the eigenvalues of $A_u+B_uK$ have negative real part and the matrix multiplying $z^v$ is triangular,  
it follows that the zero solution to \eqref{expstab} is exponentially stable.

Next, following  \cite{Armaou2000a,Christofides1998,Christofides2000}, we use a Lyapunov argument to show that these controls 
stabilise the zero solution to equation~\eqref{pertGKS}.
We first use the fact that exponential stability of the system~\eqref{expstab} implies that there exists a positive constant $a$ such that the operator $\A(v) = -\nu v_{xxxx} - \mu \hilb[v_{xxx}]  - v_{xx} + \sum_{i = 1}^m b_i(x)K_{i\cdot}z^v_u$, where $K_{i\cdot}$ denotes the $i-$th row of the matrix $K$, satisfies
\begin{equation}\label{operator} (\A v,v) \leq -a \|v\|^2. \end{equation}
Defining $V(v) = \int_0^{2\pi} \frac{v^2}{2} \ dx$, it is easy to verify that $V(0) = 0$ and $V(v) > 0, \, \forall v>0$. Multiplying \eqref{pertGKS} by $v$ and integrating gives
\begin{equation} \label{Lyapunov}
\frac{d}{d t}\int_0^{2\pi} \frac{v^2}{2} \ dx = \int_0^{2\pi} vv_t \ dx = (\A v,v) - \delta\int_0^{2\pi} v_{xxx}v \ dx- \int_0^{2\pi} v^2v_x  \ dx - \int_0^{2\pi} v(\bar{u}v)_x \ dx.
\end{equation}
Integration by parts and and use of periodicity shows that 
the first two integrals on the right-hand side of \eqref{Lyapunov} are zero. 
It remains to obtain an estimate for the third integral. Again using integration by parts and periodicity gives
\[
-\int_0^{2\pi}(\bar{u}v)_x v\ dx = -\int_0^{2\pi}\bar{u}v_x v\ dx -\int_0^{2\pi}\bar{u}_x v^2\ dx  = -\frac{1}{2}\int_0^{2\pi} \bar{u}_x v^2 \ dx
\]
\[
\leq - \frac{\inf \bar{u}_x}{2}\int_0^{2\pi}v^2\ dx = -\frac{\inf \bar{u}_x}{2}\left\|v\right\|^2.
\]
Adding everything up, we obtain
\begin{equation}\label{estimate_stabilization}
\frac{1}{2}\frac{d}{dt}\left\|v\right\|^2\leq -\left(a + \frac{\inf \bar{u}_x}{2}\right)\left\|v\right\|^2.
\end{equation}
If the eigenvalues of the matrix $A_u+B_uK$ are chosen such that $2a+ \inf \bar{u}_x \geq 0$, we obtain that $\frac{d}{dt}V(v(t)) \leq 0$, which proves that $V$ is a Lyapunov function for the system and therefore the zero solution is stable.

Using controls \eqref{control}, we can therefore stabilise the nontrivial steady state $\bar{u}$ of the original equation.
\end{proof}

Using Proposition \ref{prop1}, we can conclude that in order to stabilise 
the steady state $\bar{u}$ of equation~\eqref{eq:KS2} we should
solve the PDE
\begin{equation}\label{stateequation}
u_t + \nu u_{xxxx} + \mu\hilb[u_{xxx}] +  \delta u_{xxx} +  u_{xx} + uu_x = \sum_{i = 1}^m b_i(x)K_{i\cdot}(z^u_u-z^{\bar{u}}_u).
\end{equation}

\begin{remark}\label{rmk1}
Since the solutions to the generalised Kuramoto-Sivashinsky equation are taken to be periodic with mean zero, it follows that
$\inf \bar{u}_x < 0$. Therefore, the constant $a$ in \eqref{estimate_stabilization} must be chosen large enough so that $a + \frac{\inf \bar{u}_x}{2}$ is positive. 
In the case when $\delta > 0$, we also need to account for the fact that the amplitude of the solutions (and therefore the absolute value of their derivatives) grows with $\delta$ \cite{Kawahara1998,Akrivis2012}. Further details can be found in Section~\ref{sec:Numerical}.
\end{remark}
\begin{remark}\label{rmk2}
The above proposition is clearly valid for the case when $\bar{u} = 0$, in which case the controls are $f_i(t) = K_{i\cdot}z^u_u$, as presented in \cite{Armaou2000a,Christofides1998,Christofides2000}. In the case when $\bar{u}$ is a travelling wave, the 
result follows using a time dependent $z^{\bar{u}}$. See Section~\ref{sec:Numerical} and in particular Equation~\eqref{conversion}.
\end{remark}

\begin{remark}\label{rmk3}
From estimates~\eqref{L2estimates}, it follows that the value $\inf |\bar{u}_x|$ is finite and therefore we can conclude~\eqref{estimate_stabilization}.
\end{remark}

\begin{remark}\label{rmk4}
Christofides et al. \cite{Armaou2000a,Christofides1998,Christofides2000} argued that due to the multiplicity of the eigenvalues of the Kuramoto-Sivashinsky equation being less or equal than $4$, one would only need $5$ controls to stabilise the zero solution of that equation. 
The same holds in our case: the multiplicity of the eigenvalues of the linear operator in \eqref{linearizedGKS} is also less than
or equal than 4, but numerical results suggest that we need to use $m = 2l+1$ controls, or at least a number of controls that is close to the number of unstable modes, see Fig.~\ref{fig:compareControls} and the discussion below.
\end{remark}

\begin{remark}\label{rmk5}
The fact that we are separating the system between stable and unstable modes implies 
that the matrix $B_u$ is square ($B_u\in\RR^{m\times m}$), and using $b_i(x) = \delta(x-x_i)$  means that $B_u$ has full rank. 
It follows that the Kalman rank condition~\cite{Zabczyk1992} is automatically verified and the matrix $K$ needed for the stabilisation will always exist.
\end{remark}

\subsection{Robustness of controls}

A natural and important question is whether the proposed control
methodology is robust with respect to changes (or uncertainty)
in the parameters $\nu$, $\mu$ and $\delta$ that appear in the equation. The
robustness of our method can be proved rigorously using techniques from
control theory, e.g.~\cite[Thm. 6]{Kautsky1985}, and we take this up next.

\begin{prop}\label{prop:bounds}
Let $\lambda_i$, $i=1,\dots,N$ be the eigenvalues of the matrix $C$ appearing in \eqref{expstab}, $X$ be the matrix of eigenvectors of $C$ ,
and let $\kappa(\cdot)$ denote its condition number. Then we have
\begin{equation}\label{normOfK}
\|K\|_2 \leq \frac{\left(\|A\|_2 + \displaystyle{\max_j}\left(|\lambda_j|\right)\kappa(X)\right)}{\sigma_m(B)}
\end{equation}
where $\sigma_m(B)$ is the $m$-th smallest singular value of $B$, which is defined in Equations~\eqref{A-B} and~\eqref{B}, and the solution $z^v$ to equation~\eqref{expstab} satisfies
\begin{equation}\label{normOfSolution}
\|z^v(t)\|\leq \kappa(X)\displaystyle{\max_j}\left(|e^{\lambda_jt}|\right)\|z^v(0)\|.
\end{equation}
\end{prop}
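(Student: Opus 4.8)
The plan is to prove the two bounds separately, since they are essentially independent consequences of the spectral decomposition of $C$.

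\medskip

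\noindent\textbf{The bound on $\|z^v(t)\|$.}  This is the easier of the two and I would do it first. Since $X$ is the matrix of eigenvectors of $C$, we have $C = X \Lambda X^{-1}$ with $\Lambda = \operatorname{diag}(\lambda_1,\dots,\lambda_N)$, hence $e^{Ct} = X e^{\Lambda t} X^{-1}$. Taking the (spectral/$2$-) norm and using submultiplicativity gives
\[
\|z^v(t)\| = \|e^{Ct} z^v(0)\| \le \|X\|_2\,\|e^{\Lambda t}\|_2\,\|X^{-1}\|_2\,\|z^v(0)\|.
\]
Now $\|e^{\Lambda t}\|_2 = \max_j |e^{\lambda_j t}|$ because $e^{\Lambda t}$ is diagonal, and $\|X\|_2\|X^{-1}\|_2 = \kappa(X)$ by definition of the condition number. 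This is exactly \eqref{normOfSolution}. (One caveat to flag: if $C$ is not diagonalisable — e.g. because of the eigenvalue multiplicities mentioned in Remark~\ref{rmk4} — then $X$ should be read as a matrix bringing $C$ to a form in which the exponential can be bounded by $\max_j|e^{\lambda_j t}|$ times a polynomial; I would either assume $C$ diagonalisable or absorb the polynomial factor, and I expect the paper does the former.)

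\medskip

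\noindent\textbf{The bound on $\|K\|_2$.}  The starting point is the closed-loop relation: in the unstable block, the feedback $F = K(z^u_u - z^{\bar u}_u)$ replaces $A_u$ by $A_u + B_u K$, and more globally (writing everything on the full coordinate vector, or just on the unstable block where $B_u$ is square and invertible by Remark~\ref{rmk5}) we have an identity of the form $B K = C_{\mathrm{cl}} - A$, where $C_{\mathrm{cl}}$ is the closed-loop matrix whose eigenvalues are the prescribed $\lambda_j$ and whose eigenvector matrix is $X$. Solving for $K$ using that $B_u$ has full rank,
\[
\|K\|_2 \le \frac{\|BK\|_2}{\sigma_m(B)} \le \frac{\|C_{\mathrm{cl}}\|_2 + \|A\|_2}{\sigma_m(B)},
\]
where $\sigma_m(B)$ is the smallest singular value of the relevant (square, full-rank) block, so that $\|B^{-1}\|_2 = 1/\sigma_m(B)$. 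The remaining step is to estimate $\|C_{\mathrm{cl}}\|_2$ in terms of its spectrum: since $C_{\mathrm{cl}} = X\Lambda X^{-1}$,
\[
\|C_{\mathrm{cl}}\|_2 \le \|X\|_2\,\|\Lambda\|_2\,\|X^{-1}\|_2 = \kappa(X)\,\max_j|\lambda_j|,
\]
which, substituted above, yields \eqref{normOfK}. This argument is precisely the content of \cite[Thm.~6]{Kautsky1985} specialised to our setting, so I would cite that theorem for the structure and just spell out the two or three lines above.

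\medskip

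\noindent\textbf{Anticipated main obstacle.}  The routine calculus is trivial; the real care is bookkeeping about which matrix is $B$ and which is $C$. The statement writes $\sigma_m(B)$ with $B$ from \eqref{A-B}–\eqref{B}, i.e. the tall infinite matrix $[B_u;B_s]$, but the inversion step genuinely uses only the square, full-rank block $B_u$; I would need to reconcile this, presumably by noting $\sigma_m$ of the tall matrix equals $\sigma_{\min}$ of $B_u$ after the stable rows are accounted for, or simply by interpreting $B$ as $B_u$ in \eqref{normOfK}. Likewise $C$ in \eqref{expstab} is the (block-triangular) $N\times N$ Galerkin truncation, so "eigenvalues of $C$" really means the union of the eigenvalues of $A_u+B_uK$ and of $A_s$; since the diagonalisation and the submultiplicativity estimates are insensitive to this block structure, the argument goes through once the notation is pinned down, and that pinning-down is the only thing requiring attention.
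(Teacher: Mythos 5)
Your argument is correct and coincides with what the paper intends: the paper gives no separate proof of this proposition but simply invokes \cite[Thm.~6]{Kautsky1985}, whose underlying derivation is exactly your two steps — $\|K\|_2\le\sigma_m(B)^{-1}\left(\|A\|_2+\|X\Lambda X^{-1}\|_2\right)$ from $BK=C-A$ and full column rank of $B$, and the solution estimate from $e^{Ct}=Xe^{\Lambda t}X^{-1}$ with $\|X\|_2\|X^{-1}\|_2=\kappa(X)$. Your bookkeeping caveats (diagonalisability of $C$ and reading $\sigma_m(B)$ as the smallest of the $m$ singular values of the full-column-rank $B$) are the same implicit assumptions the paper makes.
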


\begin{prop}\label{prop:robustness}
If the feedback matrix $K$ is such that Equation~\eqref{expstab} is exponentially stable, then the perturbed closed loop system matrix $A+BK+\Delta$ remains stable for all disturbances $\Delta$ which satisfy
\begin{equation}\label{pertMatrix}
\|\Delta\|_2 < \displaystyle{\min_{s = i\omega}} \sigma_N\left(sI -(A+BK)\right)=\colon \zeta(K),
\end{equation}
where
\begin{equation*}
\zeta(K)\geq \min_j Re\left(\frac{-\lambda_j}{\kappa(X)}\right),
\end{equation*}
and $Re(\cdot)$ denotes the real part.
\end{prop}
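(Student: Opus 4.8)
The plan is to read Proposition~\ref{prop:robustness} as a robust-stability (stability radius) statement for the closed-loop matrix and to establish it in the spirit of \cite[Thm.~6]{Kautsky1985}. Throughout, write $M := A+BK$, i.e.\ the closed-loop matrix $C$ of \eqref{expstab}, which by hypothesis is Hurwitz, with eigenvalues $\lambda_1,\dots,\lambda_N$ satisfying $Re\,\lambda_j < 0$ for all $j$ and eigenvector matrix $X$. I would split the argument into three parts: (i) show that $M+\Delta$ has no eigenvalue on the imaginary axis whenever $\|\Delta\|_2 < \zeta(K)$; (ii) upgrade this to genuine Hurwitz stability of $M+\Delta$ by a homotopy/continuity argument; and (iii) derive the quantitative lower bound on $\zeta(K)$ by a short diagonalisation estimate.

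For (i) I would invoke the elementary fact that, for an invertible matrix $N$, the spectral-norm distance to the nearest singular matrix equals its smallest singular value: $\min\{\|E\|_2 : N-E\ \text{is singular}\} = \sigma_N(N)$. If $M+\Delta$ had an eigenvalue $s = i\omega$ on the imaginary axis, then $i\omega I - M - \Delta$ would be singular, so $\Delta$ would be an admissible $E$ for $N = i\omega I - M$; hence $\|\Delta\|_2 \ge \sigma_N(i\omega I - M) \ge \min_{s=i\omega}\sigma_N(sI - M) = \zeta(K)$, contradicting $\|\Delta\|_2 < \zeta(K)$. Therefore $M+\Delta$ has no spectrum on $i\RR$.

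Part (ii) closes the gap that (i) only excludes marginal modes. I would deform along the path $t \mapsto M + t\Delta$, $t \in [0,1]$: at $t=0$ the matrix is $M$, which is Hurwitz; for every $t \in [0,1]$ one has $\|t\Delta\|_2 \le \|\Delta\|_2 < \zeta(K)$, so by part (i) no $M + t\Delta$ has an eigenvalue on $i\RR$; since the eigenvalues (roots of the characteristic polynomial) depend continuously on $t$ and $[0,1]$ is connected, the number of eigenvalues of $M+t\Delta$ in the open right half-plane is a locally constant, hence constant, integer along the path, equal to its value $0$ at $t=0$. Thus $M+\Delta$ is Hurwitz, which is the asserted stability. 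For part (iii) I would diagonalise $M = X\Lambda X^{-1}$ with $\Lambda = \operatorname{diag}(\lambda_1,\dots,\lambda_N)$, so that $(i\omega I - M)^{-1} = X(i\omega I - \Lambda)^{-1}X^{-1}$, and estimate
\begin{align*}
\big\|(i\omega I - M)^{-1}\big\|_2 &= \big\|X(i\omega I - \Lambda)^{-1}X^{-1}\big\|_2 \le \kappa(X)\,\big\|(i\omega I - \Lambda)^{-1}\big\|_2 \\
&= \kappa(X)\,\max_j \frac{1}{|i\omega - \lambda_j|} \le \frac{\kappa(X)}{\min_j |Re\,\lambda_j|},
\end{align*}
where the last inequality uses $|i\omega - \lambda_j| \ge |Re(i\omega - \lambda_j)| = |Re\,\lambda_j|$ uniformly in $\omega$. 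Taking reciprocals of this bound, which is uniform in $\omega$, gives $\sigma_N(i\omega I - M) \ge \min_j |Re\,\lambda_j|/\kappa(X)$ for all $\omega$, hence $\zeta(K) = \min_{s=i\omega}\sigma_N(sI - M) \ge \min_j |Re\,\lambda_j|/\kappa(X) = \min_j Re(-\lambda_j/\kappa(X))$.

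I expect the main obstacle to be part (ii): the singular-value bound of (i) only precludes eigenvalues on the imaginary axis, and concluding that $M+\Delta$ is genuinely Hurwitz --- rather than merely free of marginal modes --- requires a careful appeal to continuity of the spectrum together with connectedness of the deformation path, the precise statement being that the count of right-half-plane eigenvalues is locally constant in $t$ and hence constant. The other ingredients --- the distance-to-singularity identity of part (i) and the diagonalisation estimate of part (iii) --- are routine linear algebra; it is also worth recording that the whole argument is carried out at the level of the finite Galerkin truncation, where $C = A+BK$ is a genuine $N\times N$ matrix, so that $\zeta(K) > 0$ (indeed the lower bound from (iii) makes this explicit).
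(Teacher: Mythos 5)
Your argument is correct, and it is worth noting that the paper itself does not supply a proof of this proposition at all: it simply invokes the robust pole-assignment literature, citing \cite[Thm.~6]{Kautsky1985}, so your write-up is a self-contained reconstruction of exactly the stability-radius result that the citation stands in for. The three ingredients you use are the standard ones underlying that reference: the Eckart--Young/distance-to-singularity identity $\min\{\|E\|_2 : sI-M-E \text{ singular}\}=\sigma_N(sI-M)$ to rule out imaginary-axis eigenvalues of $M+\Delta=A+BK+\Delta$ under \eqref{pertMatrix}; the homotopy $t\mapsto M+t\Delta$ with continuity of the spectrum to convert ``no marginal modes'' into ``Hurwitz'' (this is indeed the step that needs the most care, and your locally-constant-count argument is the right way to close it, since eigenvalues remain bounded along the path and can only change half-plane by crossing $i\RR$); and the diagonalisation estimate $\sigma_N(i\omega I-M)=\|(i\omega I-M)^{-1}\|_2^{-1}\ge \min_j|\mathrm{Re}\,\lambda_j|/\kappa(X)$, which reproduces the stated lower bound on $\zeta(K)$ because $\kappa(X)>0$ and $\mathrm{Re}\,\lambda_j<0$. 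Two implicit hypotheses you correctly make explicit are also implicit in the paper's statement: the argument is carried out for the finite ($N\times N$) Galerkin truncation, and $C=A+BK$ is assumed diagonalisable (otherwise $X$ and $\kappa(X)$ in the proposition are not defined). In short, your proof buys a self-contained, elementary verification where the paper relies on an external theorem; the paper's route buys brevity and a pointer to sharper, more general perturbation bounds in \cite{Kautsky1985}.
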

In particular, if there is an error in the estimation of the parameters $\nu$ and $\mu$, then
the feedback matrix $K$ will still stabilise the zero solution as long as the error in the parameter estimation
is bounded by $\zeta(K)$. We have studied
the robustness of the controls for stabilising steady states and travelling waves by combining Propositions~\ref{prop:bounds}
 and~\ref{prop:robustness}. We now present a summary of our results.
\subsubsection*{Variations in $\delta$}
As seen from the dispersion relation \eqref{eq:lambda},
variations in $\delta$ do not affect the stability of the solutions and consequently so they do not affect the matrix $K$. 
This implies that the matrix $\Delta$ is zero, and the zero solution to system \eqref{expstab}
is still stable. In the case when we are interested in stabilising travelling waves, 
we need to take into account the fact that the amplitude of the travelling waves increases with $\delta$ - see for example \cite[Fig.~1]{Kawahara1998}. Hence, if we overestimate the value of $\inf \bar{u}_x$ and take this into account when choosing the new eigenvalues, the stabilised solution should remain close to the desired travelling wave as demonstrated by our numerical
experiments in Figure \ref{fig:compareControls_delta}.

\subsubsection*{Variations in $\nu$ and $\mu$}

As seen from \eqref{eq:lambda}
variations in $\nu$ and $\mu$ can affect the stability of the solutions and the number of unstable modes. 
An increase in unstable modes in turn affects the number of controls needed since our theoretical results support that
 we need the same number of controls as unstable modes
as stated in Remark~\ref{rmk4}.
However, we have performed numerical experiments (see Fig.~\ref{fig:compareControls}) that show that using 
two less controls than predicted theoretically does not affect the stability of the solutions.
\begin{figure}[h!]
 \centering
	\subfloat[$t = 1$]{\includegraphics[width=0.5\linewidth]{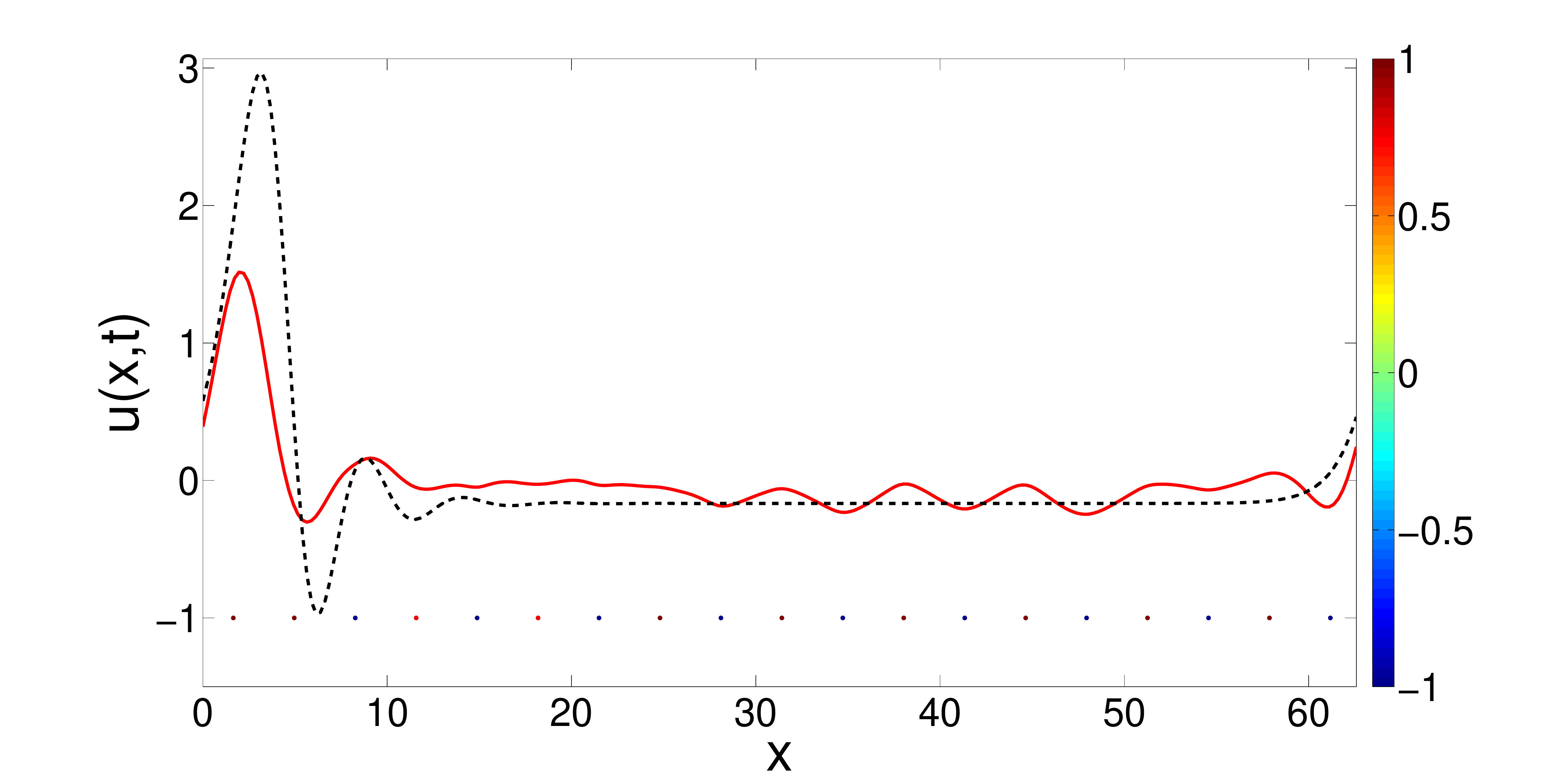}\label{fig:t1}}
	\subfloat[$t = 3$]{\includegraphics[width=0.5\linewidth]{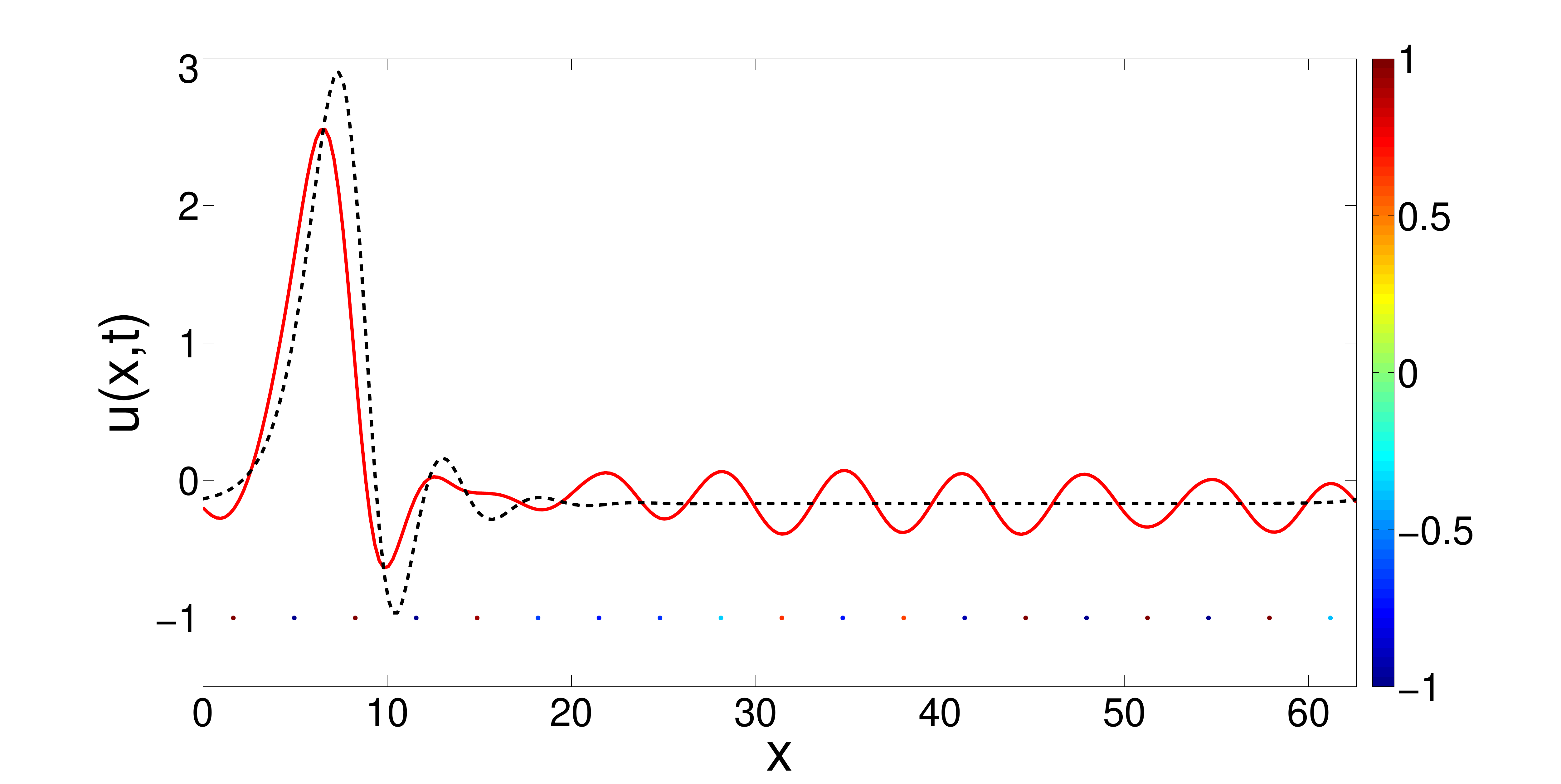}\label{fig:t3}}
	
	\subfloat[$t = 5$]{\includegraphics[width=0.5\linewidth]{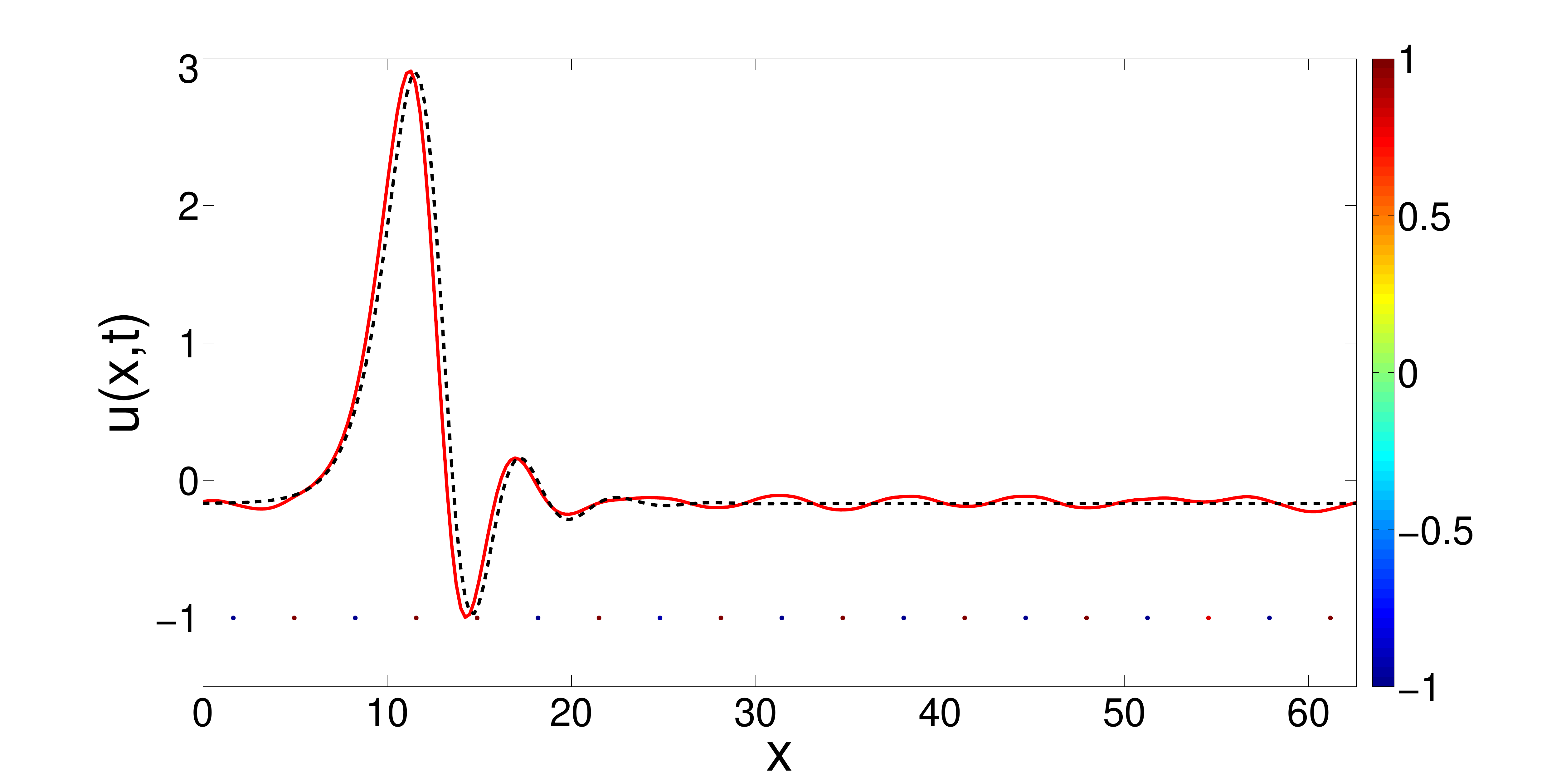}\label{fig:t5}}
	\subfloat[$t = 7$]{\includegraphics[width=0.5\linewidth]{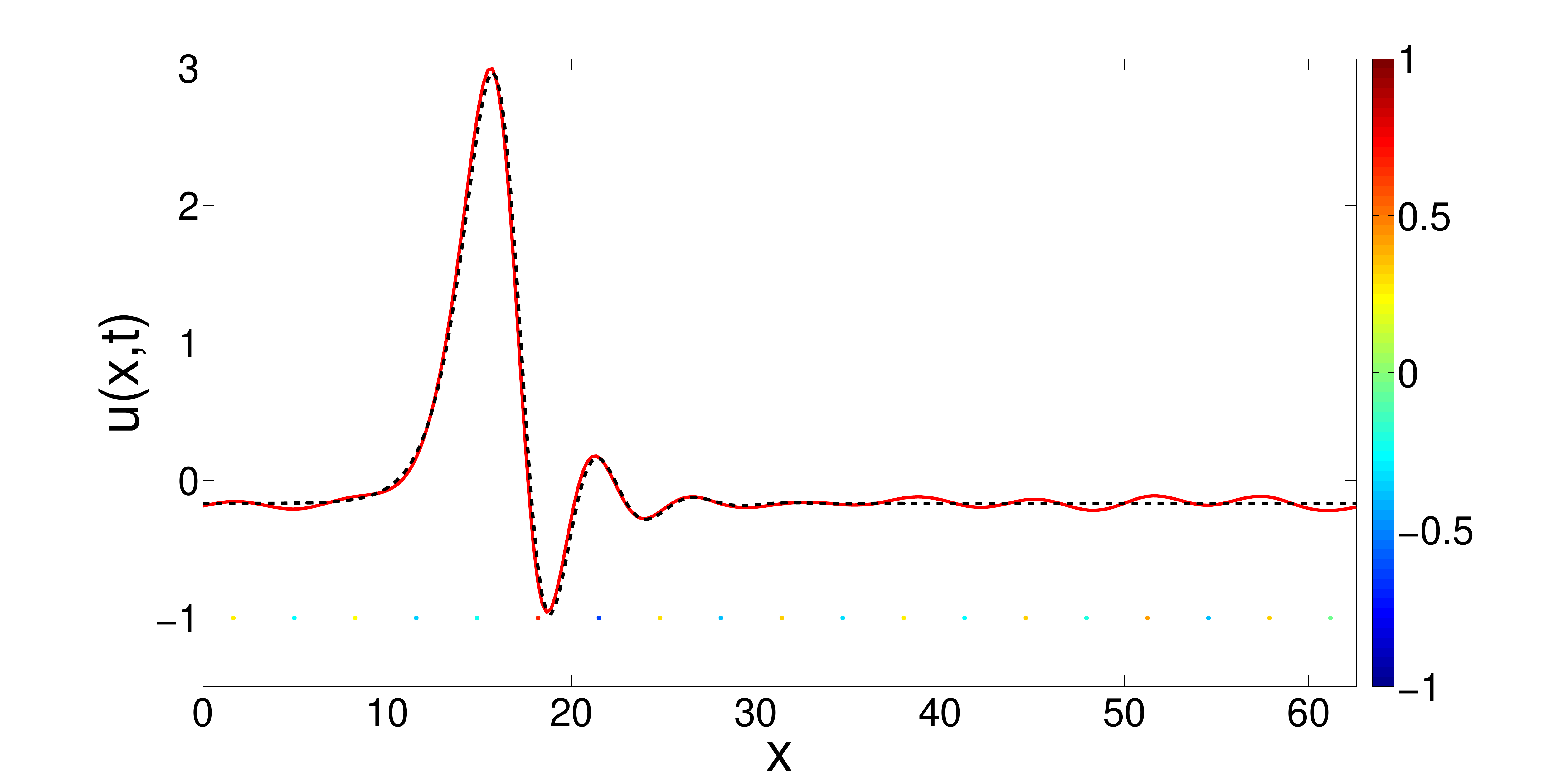}\label{fig:t7}}
	
	\subfloat[$t = 13$]{\includegraphics[width=0.5\linewidth]{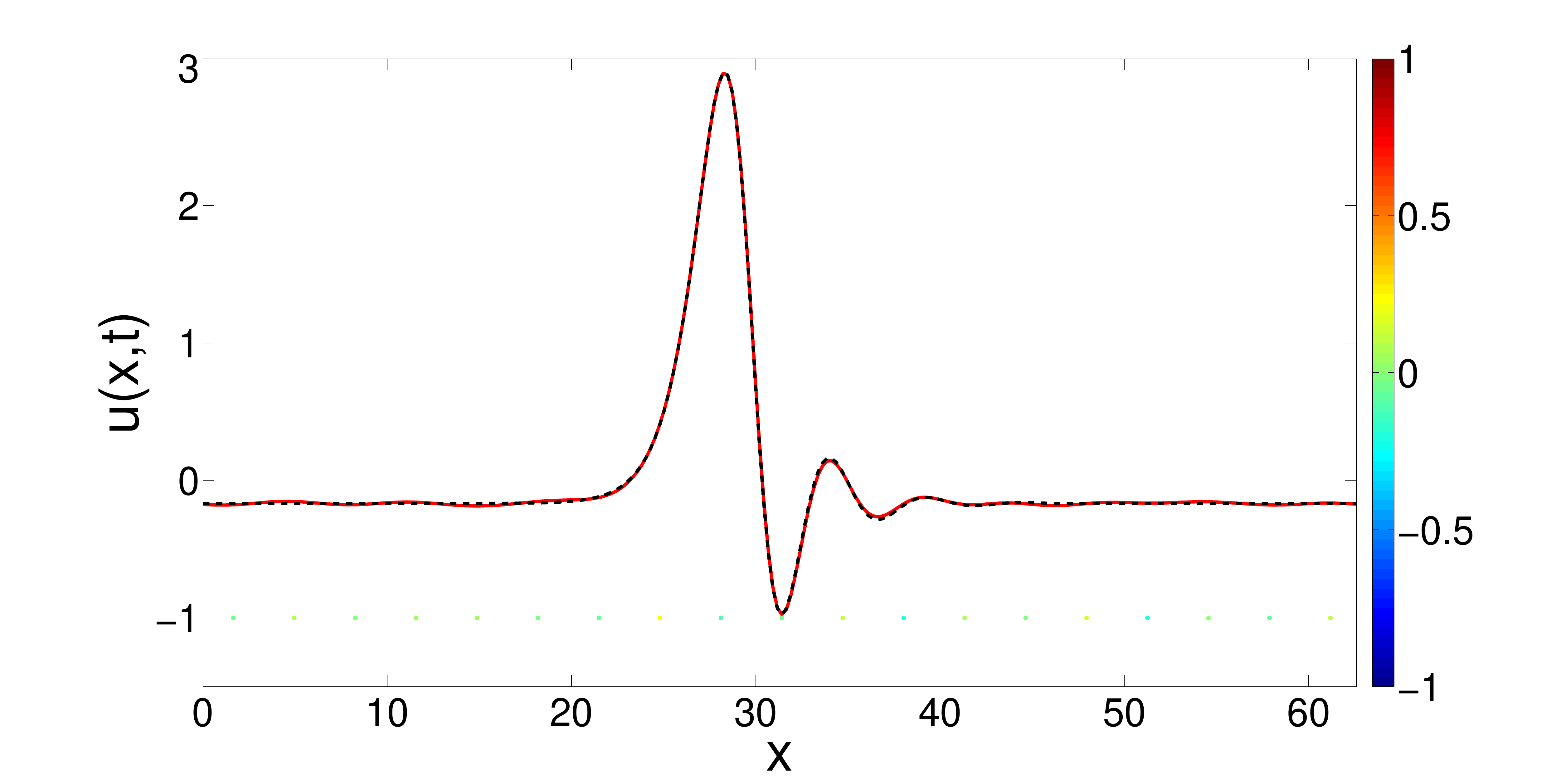}\label{fig:t13}}
	\subfloat[$t = 20$]{\includegraphics[width=0.5\linewidth]{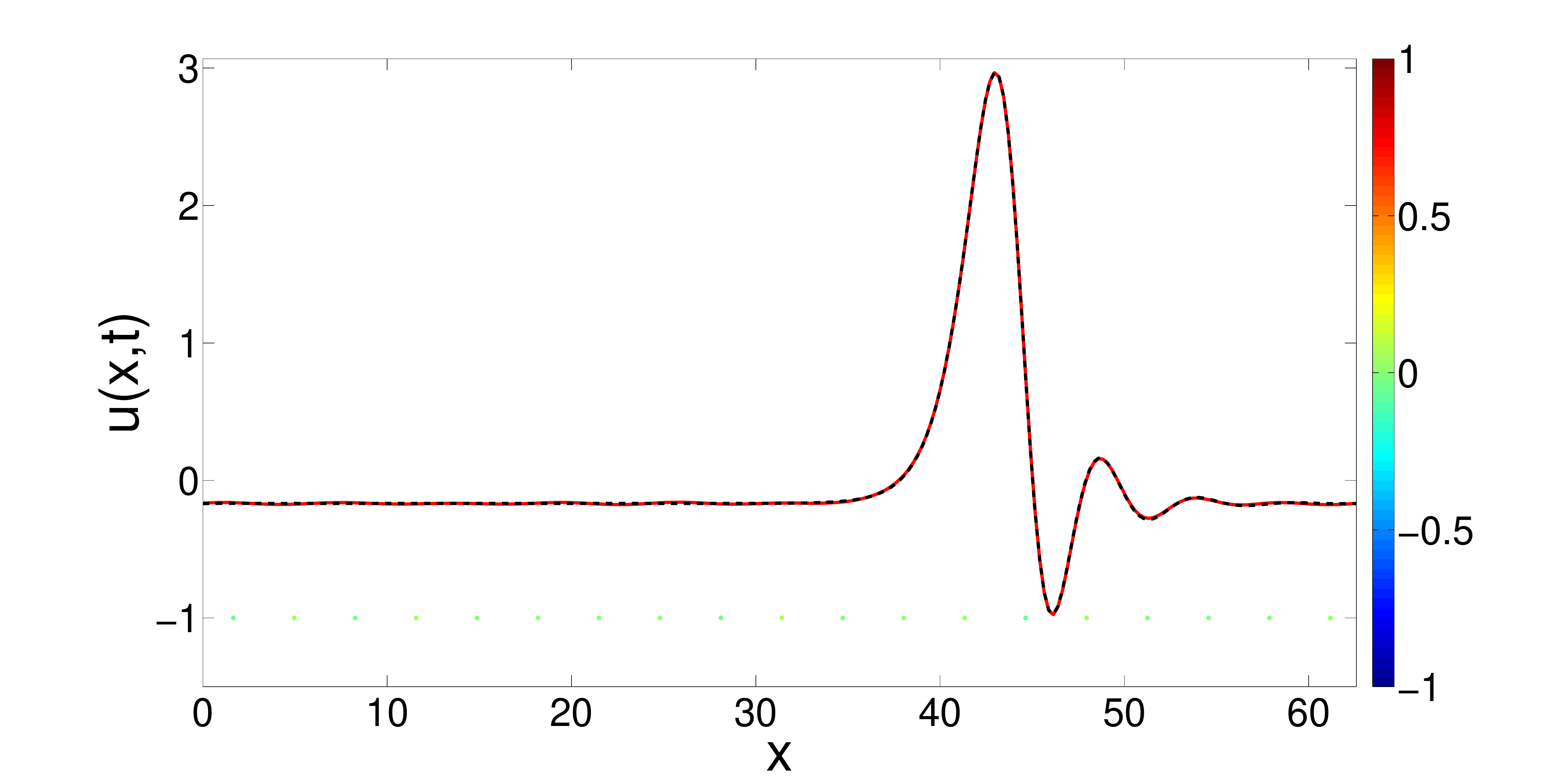}\label{fig:t20}}
	 \caption{Snapshots of the time evolution of the stabilised travelling wave solution in figure \ref{fig:delta0_1pulse} using $m=19$ controls instead of $m=21$ at different times. Red full line is the controlled solution, black dashed lines are the desired travelling wave and the dots represent the controls and their intensity.}
	 \label{fig:compareControls}
 \end{figure}

Now we consider the case where we have some uncertainty of amplitude $\epsilon_1$ and $\epsilon_2$ in the values of $\nu$ and $\mu$,
respectively,
\begin{eqnarray}
u_t = -(\nu + \epsilon_1)u_{xxxx} - (\mu + \epsilon_2)\hilb[u_{xxx}] - u_{xx} -uu_x + \sum_{i=1}^m b_i(x)K_{i\cdot}\left(z^u-z^{\bar{u}}\right).
\label{eq:uncertainty}
\end{eqnarray}
The controls have been chosen so that the solution to the equation is stabilised when $\epsilon_1=\epsilon_2=0$. 
Multiplying \eqref{eq:uncertainty} by $u$, integrating by parts and using Young's inequality, we find
\[
\frac{1}{2}\frac{d}{dt}\|u(\cdot,t)\|^2 \leq -\kappa \|u\|^2 - \epsilon_1\|u_{xx}\|^2 + \frac{\epsilon_2}{2}\left(\|u_x\|^2 + \|u_{xx}\|^2\right),
\]
where $\kappa = a + \frac{\inf \bar{u}_x}{2}$ is a constant.
On the other hand, the perturbation $-\epsilon_1 u_{xxxx} - \epsilon_2 \hilb[u_{xxx}]$ can be discretised and written as
\begin{equation}\label{Delta}
\Delta = \operatorname{diag}(0,-\epsilon_1 k^4 + \epsilon_2 k^3,-\epsilon_1 k^4 + \epsilon_2 k^3),
\end{equation}
$k=1,\dots,N/2$, and it follows that its Fr\"{o}benius norm is given by
\begin{eqnarray}
\|\Delta\|_2^2 = 2\sum_{k=1}^{N/2} k^6\left(-\epsilon_1 k +\epsilon_2\right)^2 = 2 \sum_{k=1}^{N/2}k^6\left(\epsilon_1^2 k^2 - 2\epsilon_1\epsilon_2 k + \epsilon_2^2\right).\label{eq:frobenius}
\end{eqnarray}
For stability we need \eqref{eq:frobenius} to satisfy estimate \eqref{pertMatrix} - see Proposition \ref{prop:robustness}.
Therefore, we have the following proposition.

\begin{prop}
Let $K$ be a matrix such that $A_u + B_u K$ has the prescribed (negative real part) eigenvalues, $\lambda_1,\dots,\lambda_m$, with $m=2l+1$, and let
\[
BK = \left[\begin{array}{cc} B_uK & 0 \\ B_sK & 0\end{array}\right].
\]
Then the perturbed system $A+BK + \Delta$, where $\Delta$ is given by \eqref{Delta},  is stable provided that
\[
\left(2 \sum_{k=1}^{N/2} k^6\left(\epsilon_1^2 k^2 - 2\epsilon_1\epsilon_2 k + \epsilon_2^2\right)\right)^{1/2} \leq \displaystyle{\min_{s = i\omega}}\,\sigma_N\left(sI -(A+BK)\right).
\]
\end{prop}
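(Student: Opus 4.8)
The plan is to reduce the statement to a direct application of Proposition~\ref{prop:robustness}, once the closed-loop matrix and the magnitude of the perturbation have been identified. First I would observe that, with $BK$ in the block form given in the statement,
\[
A + BK = \left[\begin{array}{cc} A_u + B_u K & 0 \\ B_s K & A_s \end{array}\right],
\]
which is exactly the matrix $C$ appearing in \eqref{expstab}. Being block lower-triangular, its spectrum is the union of the spectra of $A_u + B_u K$ and of $A_s$: the former consists of the prescribed eigenvalues $\lambda_1,\dots,\lambda_m$ with negative real part by hypothesis, while the latter is $\{-\nu k^4 + \mu k^3 + k^2 : k \ge l+1\}$, whose entries are strictly negative precisely because $l+1 > \frac{\mu + \sqrt{\mu^2+4\nu}}{2\nu}$, which is the defining property of $l$ in Proposition~\ref{prop1}. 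Hence $A+BK = C$ is Hurwitz and the system~\eqref{expstab} is exponentially stable, so Proposition~\ref{prop:robustness} applies and guarantees that the perturbed matrix $A+BK+\Delta$ remains stable whenever $\|\Delta\|_2 < \zeta(K) = \min_{s=i\omega}\sigma_N\left(sI - (A+BK)\right)$.

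Next I would bound $\|\Delta\|_2$ by the left-hand side of the claimed inequality. The matrix $\Delta$ of \eqref{Delta} is diagonal in the real Fourier basis, with entries $0$ and $-\epsilon_1 k^4 + \epsilon_2 k^3$, the latter occurring once for the sine and once for the cosine mode, $k = 1,\dots,N/2$. Its spectral norm is therefore $\max_k |{-\epsilon_1 k^4 + \epsilon_2 k^3}|$, which is dominated by its Frobenius norm, already computed in \eqref{eq:frobenius}; thus
\[
\|\Delta\|_2 \le \left( 2\sum_{k=1}^{N/2} k^6 \left( \epsilon_1^2 k^2 - 2\epsilon_1\epsilon_2 k + \epsilon_2^2 \right) \right)^{1/2}.
\]
Consequently, if the right-hand side here is at most $\min_{s=i\omega}\sigma_N\left(sI - (A+BK)\right)$, then \emph{a fortiori} $\|\Delta\|_2 \le \zeta(K)$, and Proposition~\ref{prop:robustness} yields stability of $A+BK+\Delta$, which is exactly the assertion.

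I do not expect a genuine obstacle: the substance is carried entirely by Propositions~\ref{prop:bounds} and~\ref{prop:robustness}, and what is left is bookkeeping. The two points to keep in mind are (i) fixing the Galerkin truncation dimension $N$ once and for all, so that $A$, $B$, $K$ and $\Delta$ are all compatible $N\times N$ matrices, exactly as in the derivation of \eqref{expstab} and \eqref{Delta}; and (ii) the mild gap between the spectral norm of $\Delta$, which is what Proposition~\ref{prop:robustness} really requires, and the Frobenius norm, which is what \eqref{eq:frobenius} computes — replacing the former by the latter only makes the hypothesis more conservative, hence still sufficient, and should one insist on the strict inequality demanded by Proposition~\ref{prop:robustness}, it suffices to read the displayed inequality as strict, the equality case being at worst marginal.
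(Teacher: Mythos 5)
Your argument is essentially the paper's own: the proposition is stated there as an immediate consequence of identifying $A+BK$ with the closed-loop matrix $C$ of \eqref{expstab}, discretising the parameter perturbation as $\Delta$ in \eqref{Delta} with norm \eqref{eq:frobenius}, and invoking Proposition~\ref{prop:robustness}. Your extra care about the spectral-versus-Frobenius norm (conservative, hence still sufficient) and the strict inequality in \eqref{pertMatrix} only tightens details the paper leaves implicit.
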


We have performed
numerical experiments to test the robustness of the controls, 
and in particular we focussed on robustness with respect to the parameters $\delta$ and $\nu$.
Numerical results are presented in Figures \ref{fig:compareControls}-\ref{fig:compareControls_delta} (results in these figures as well as in Figure \ref{fig:delta0},
are shown in the original unscaled domain of length $L$ - see \eqref{rescaling} for the transformations).
In figure \ref{fig:compareControls} we use the same parameter values as in Figure \ref{fig:delta0_1pulse} but we use $19$ controls instead
of $21$, i.e. two controls less than the number of unstable eigenvalues. The dashed curve is the desired travelling wave solution
and the solid curve (red online) is the controlled solution with $19$ controls. We conclude, therefore, that
our control methodology is robust with respect to a slight decrease in the number of controls. Note however, that the number of controls
cannot be significantly smaller than the number of unstable eigenvalues - for example, running the same numerical experiment with
$17$ controls did not yield satisfactory results in the sense that wavy perturbations observed in panels (b) and (c) were not suppressed.

A robustness test with respect to changes in $\nu$ (with $\delta=\mu=0$) is depicted in Figure \ref{fig:compareControls_nu}.
We begin with an unstable travelling wave at $\nu=0.013$ and wish to control it but by solving the KS equation with a reduced
value of $\nu=0.01$, i.e. we impose an uncertainty in the value of the parameter $\nu$ or equivalently in the shape of the desired solution.
The results again show robust behaviour with the two solutions being almost indistinguishable.
Finally in Figure \ref{fig:compareControls_delta} we present robustness experiments for $\mu=0$, $\nu=0.01$ and changes in
the dispersion parameter $\delta$ from $0.03$ to $0.04$, with equally accurate performance as before.

\begin{figure}[h!]
 \centering
	\subfloat[$t = 5$]{\includegraphics[width=0.5\linewidth]{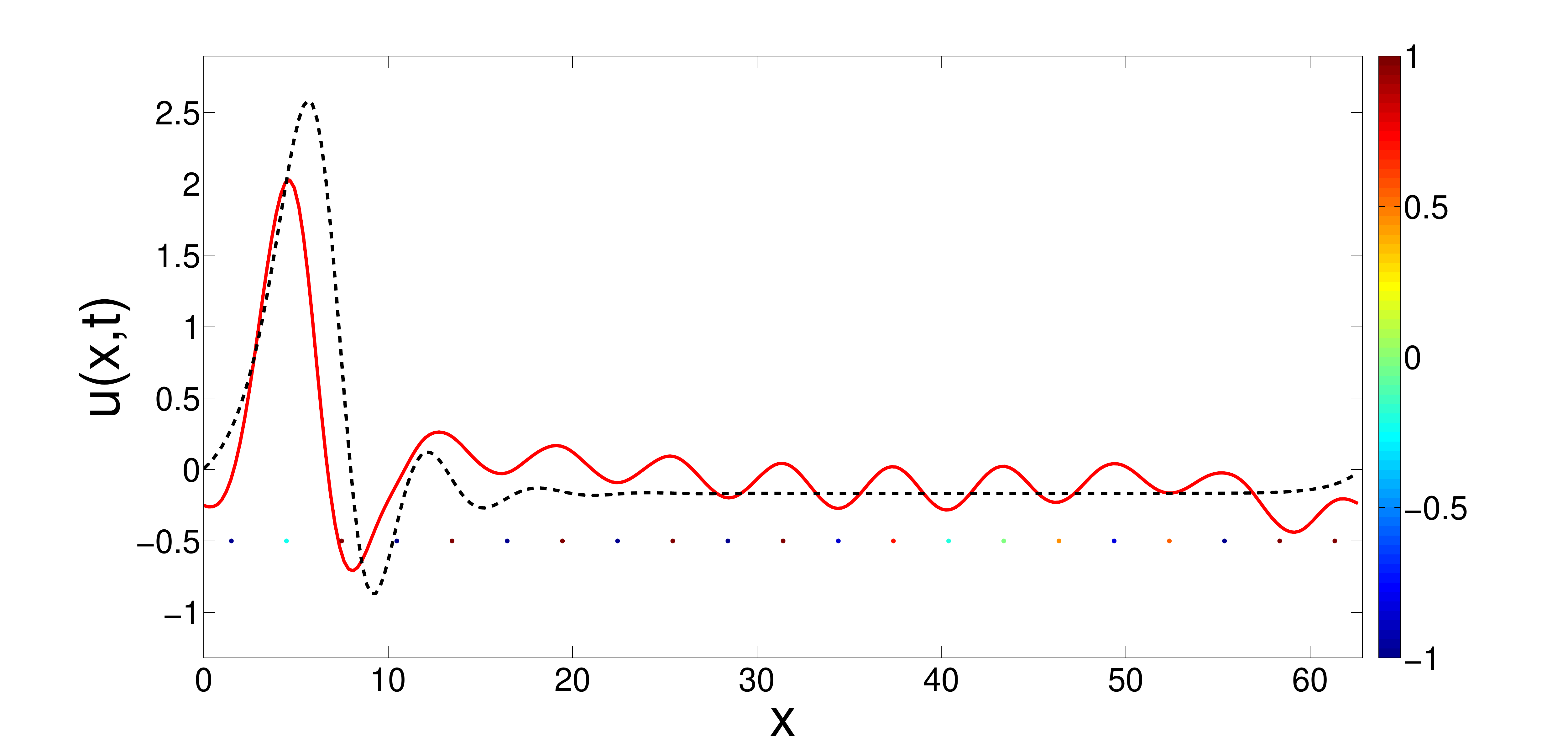}\label{fig:t5_nu}}
	\subfloat[$t = 20$]{\includegraphics[width=0.5\linewidth]{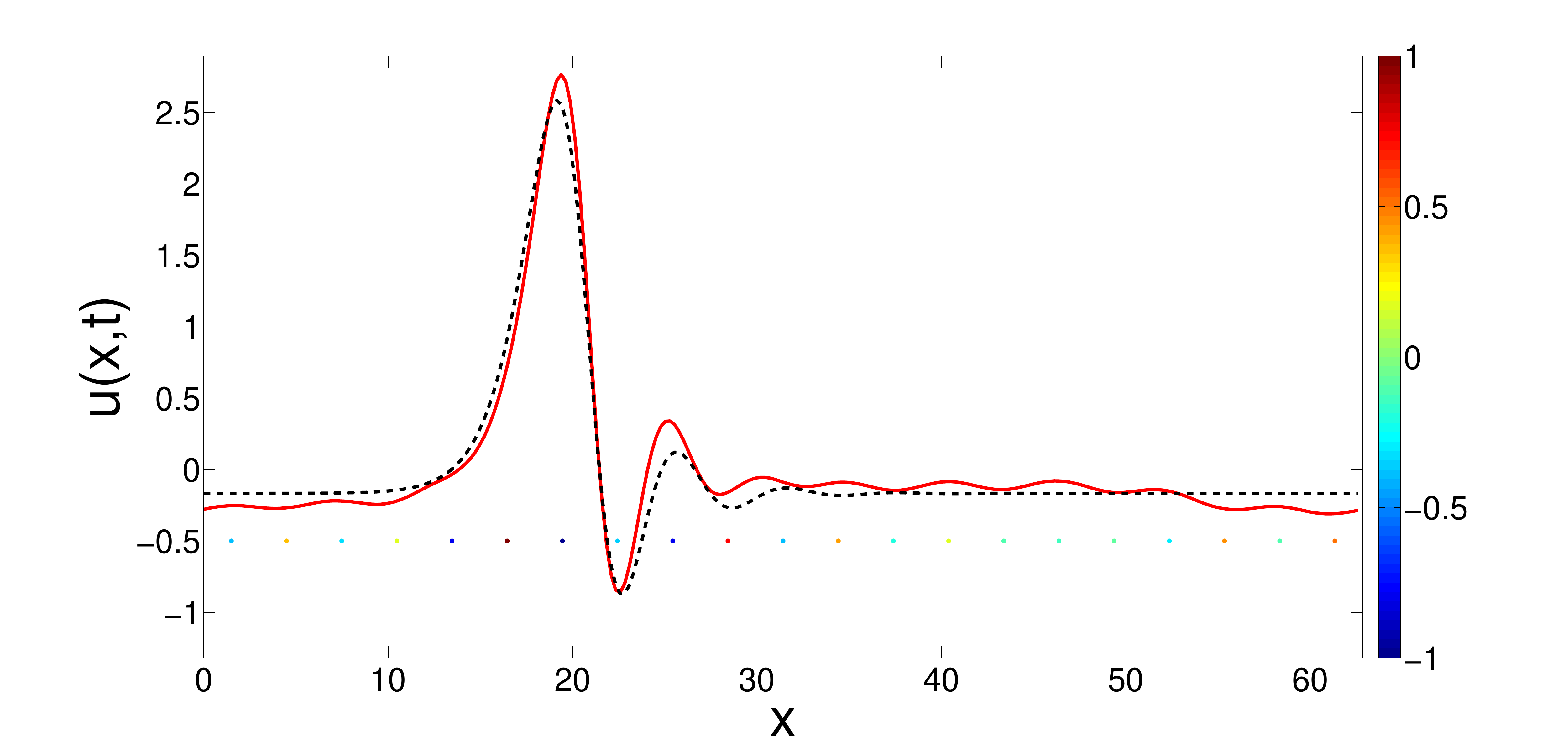}\label{fig:t20_nu}}

	\subfloat[$t = 30$]{\includegraphics[width=0.5\linewidth]{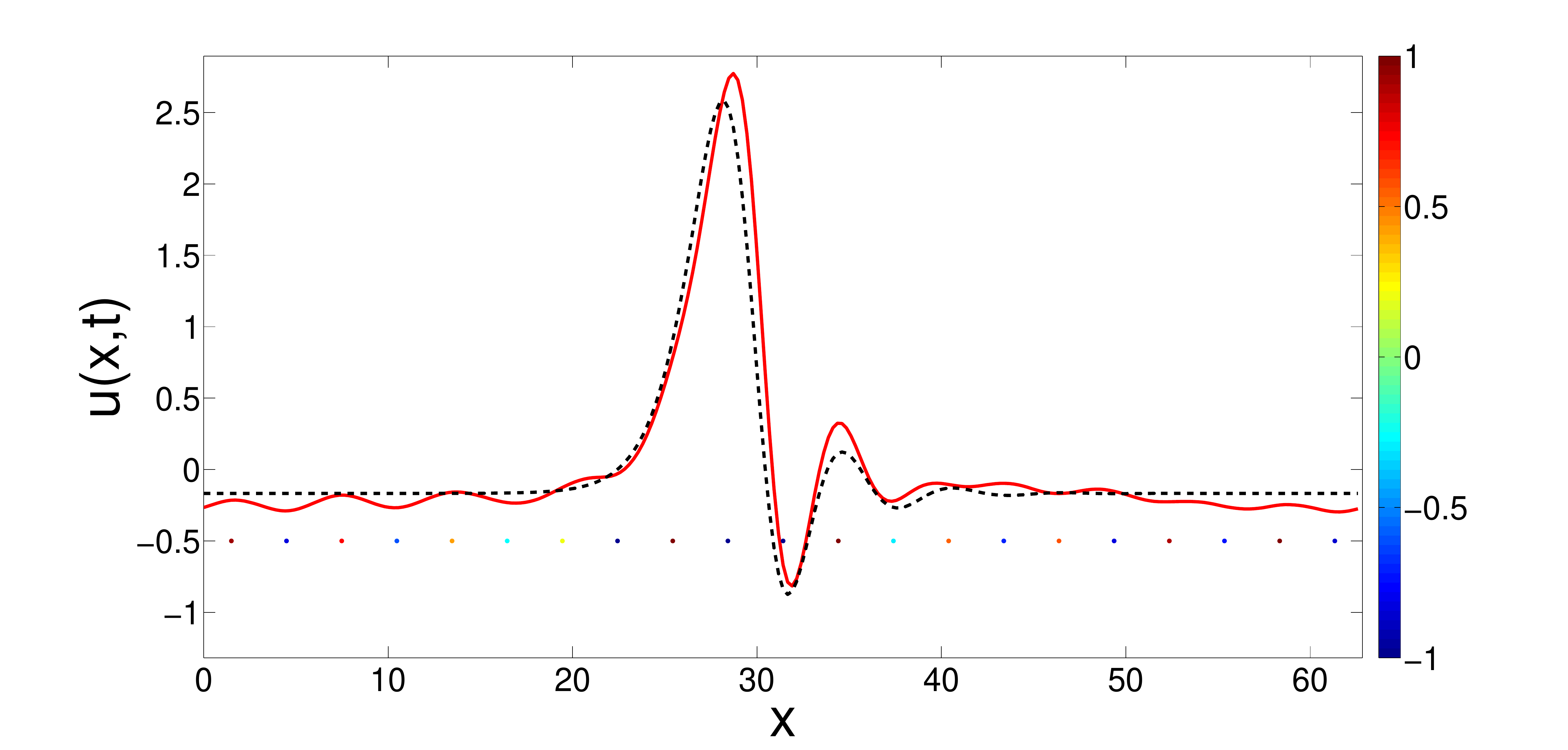}\label{fig:t30_nu}}
	\subfloat[$t = 60$]{\includegraphics[width=0.5\linewidth]{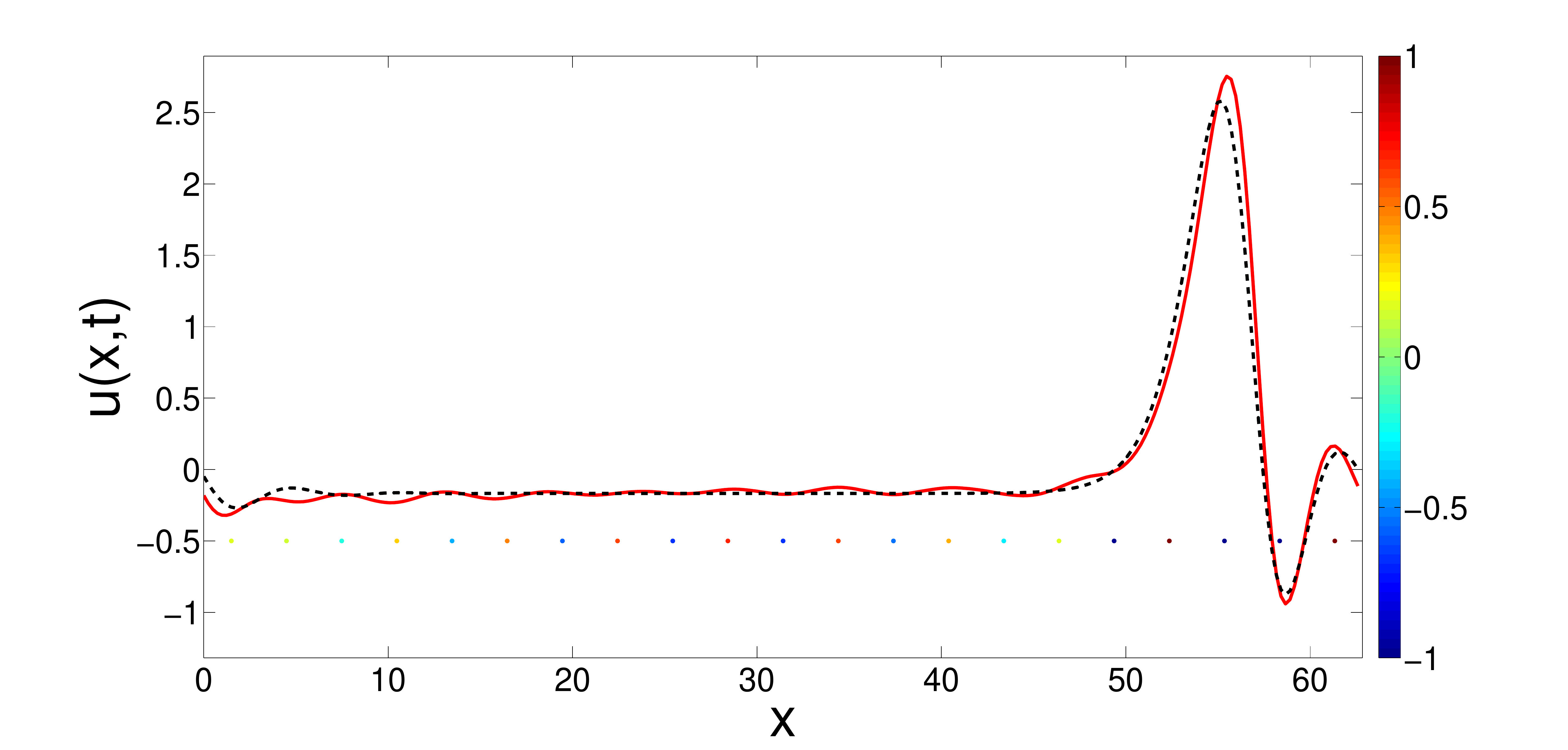}\label{fig:t60_nu}}

	\subfloat[$t = 90$]{\includegraphics[width=0.5\linewidth]{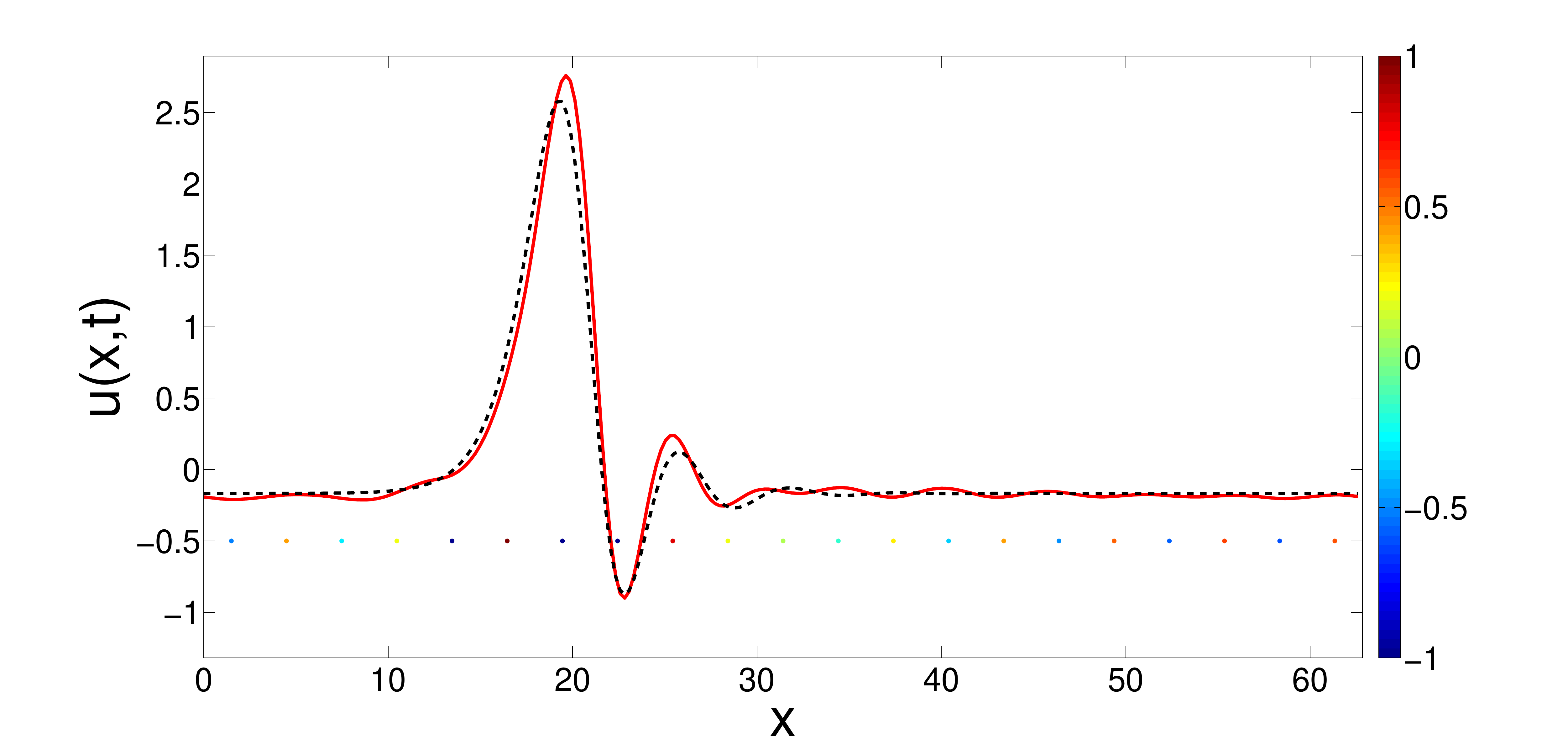}\label{fig:t90_nu}}
	\subfloat[$t = 200$]{\includegraphics[width=0.5\linewidth]{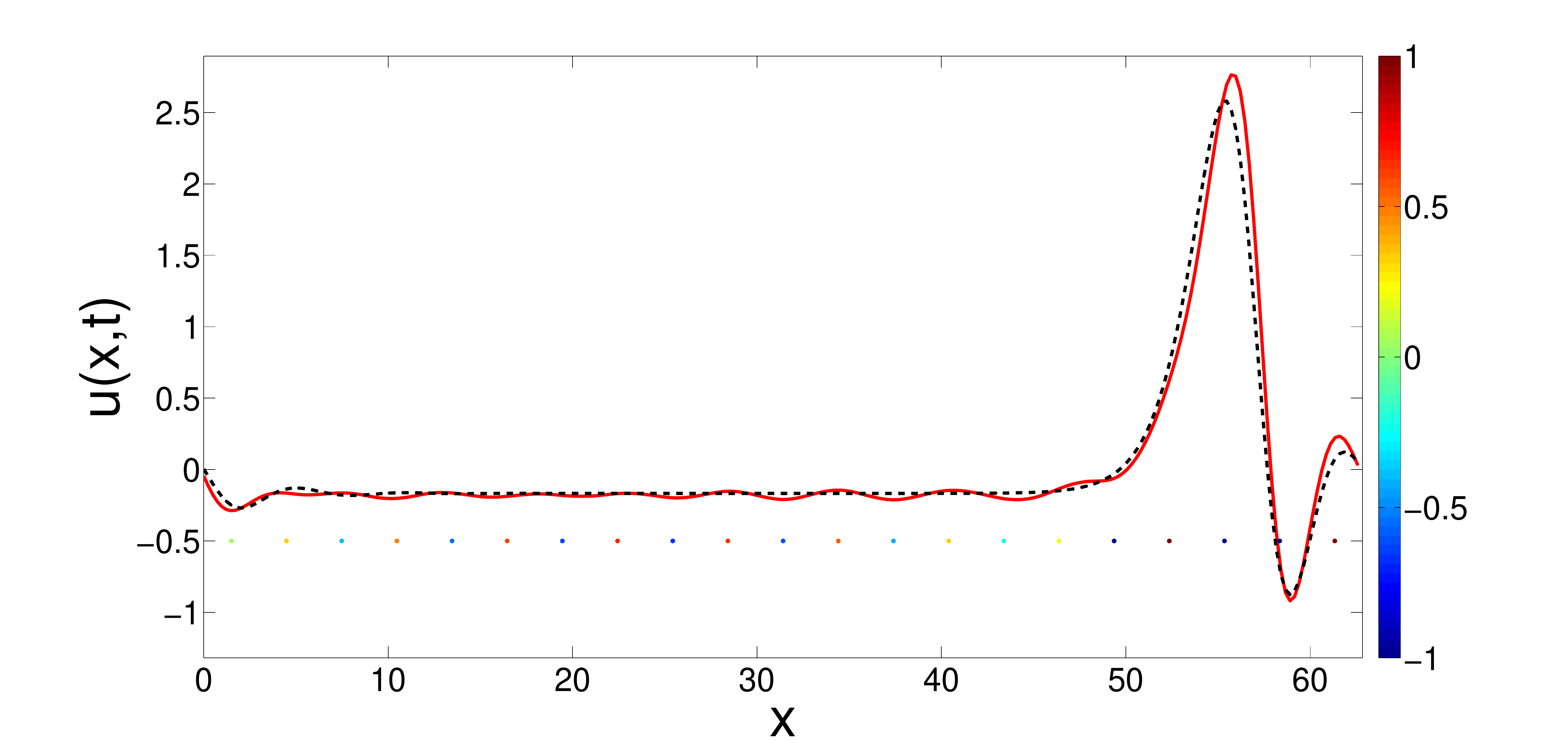}\label{fig:t200_nu}}
	 \caption{Snapshots of the time evolution of a stabilised travelling wave solution for $\delta = \mu = 0$ and assuming uncertainty in the parameter $\nu$. Black dashed line is the desired travelling wave (which is the correct solution for $\nu = 0.013 \Leftrightarrow L \approx 55$) and red full line is the controlled solution assuming $\nu = 0.01\Leftrightarrow L \approx 62$.}
	 \label{fig:compareControls_nu}
 \end{figure}

\begin{figure}[h!]
 \centering
	\subfloat[$t = 5$]{\includegraphics[width=0.5\linewidth]{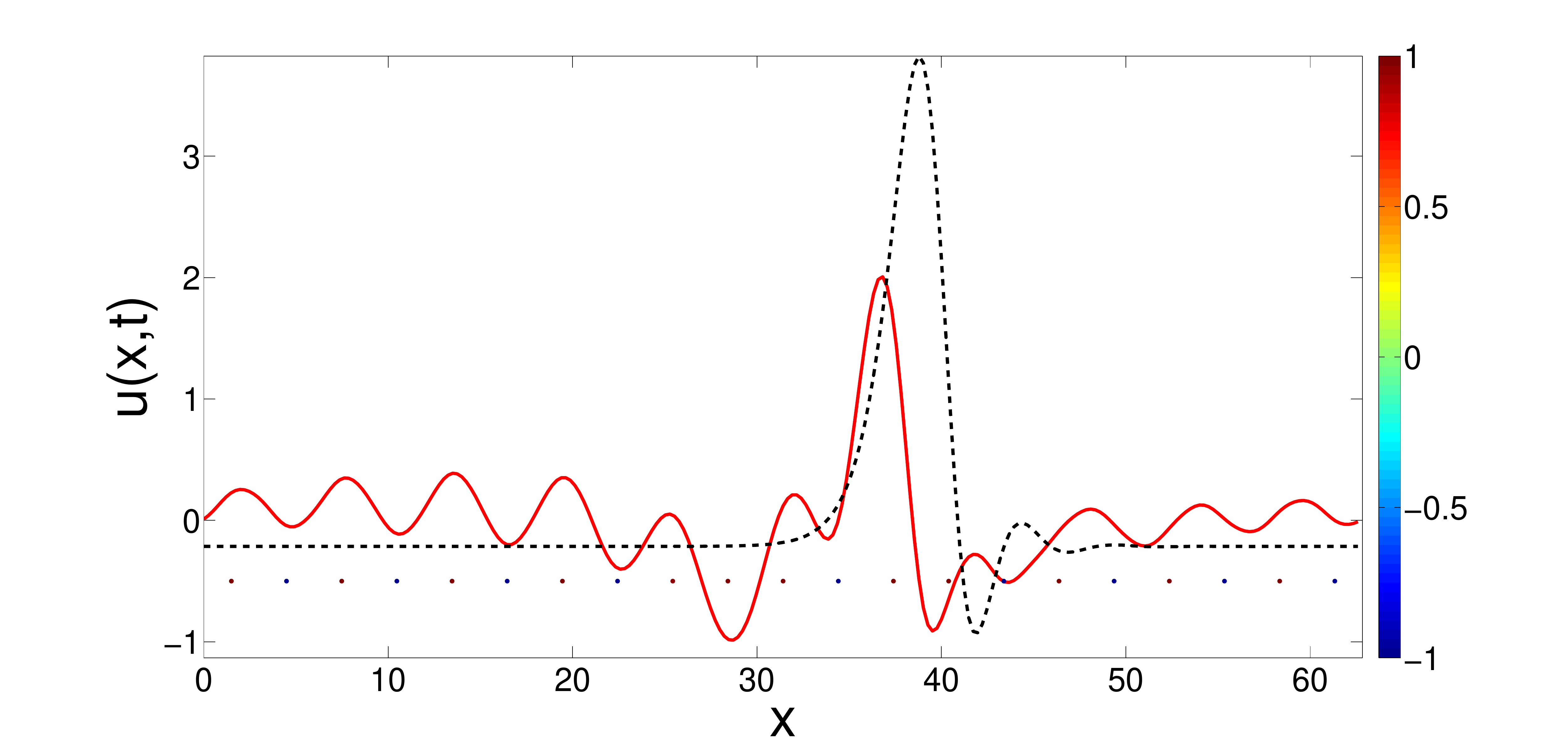}\label{fig:t5_delta}}
	\subfloat[$t = 20$]{\includegraphics[width=0.5\linewidth]{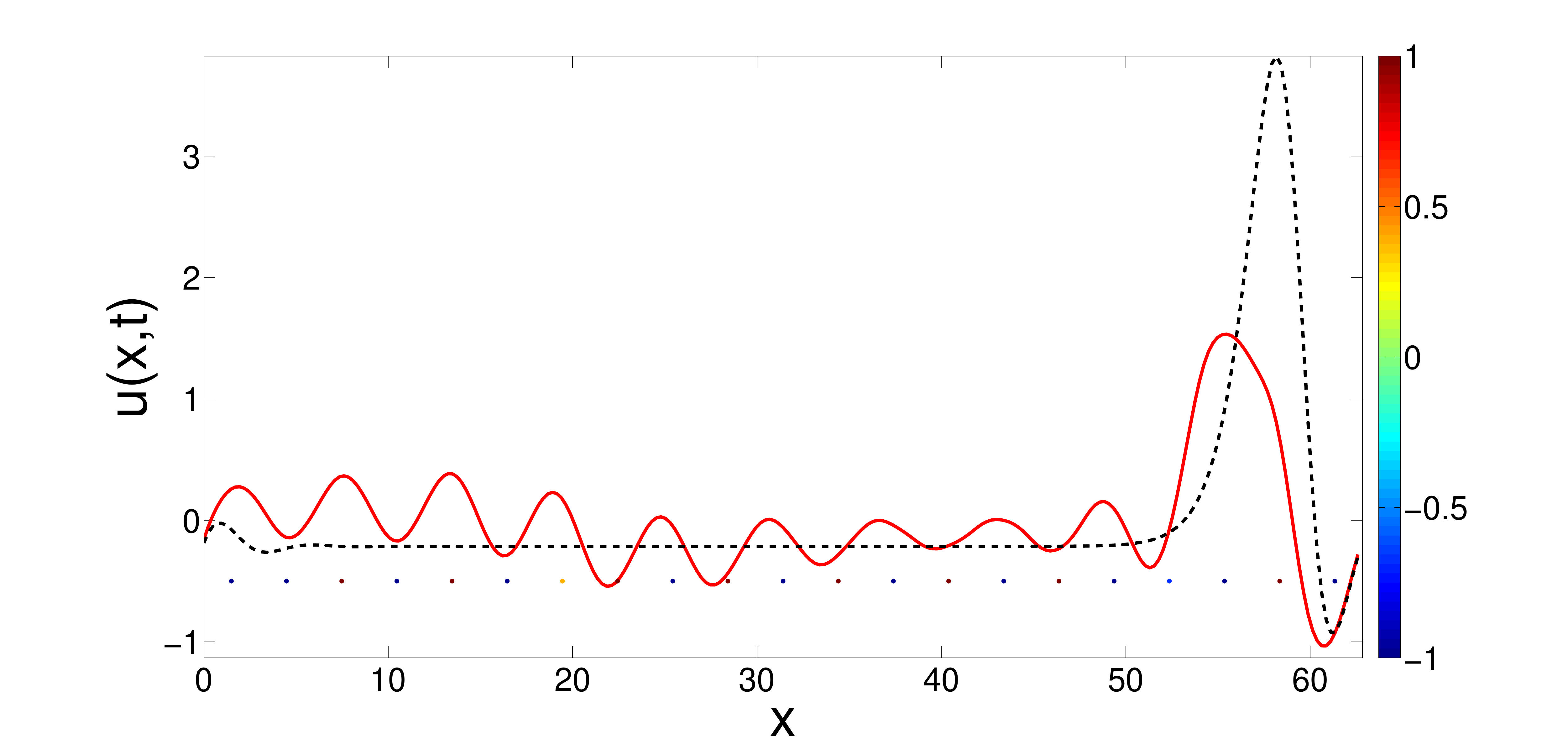}\label{fig:t20_delta}}

	\subfloat[$t = 30$]{\includegraphics[width=0.5\linewidth]{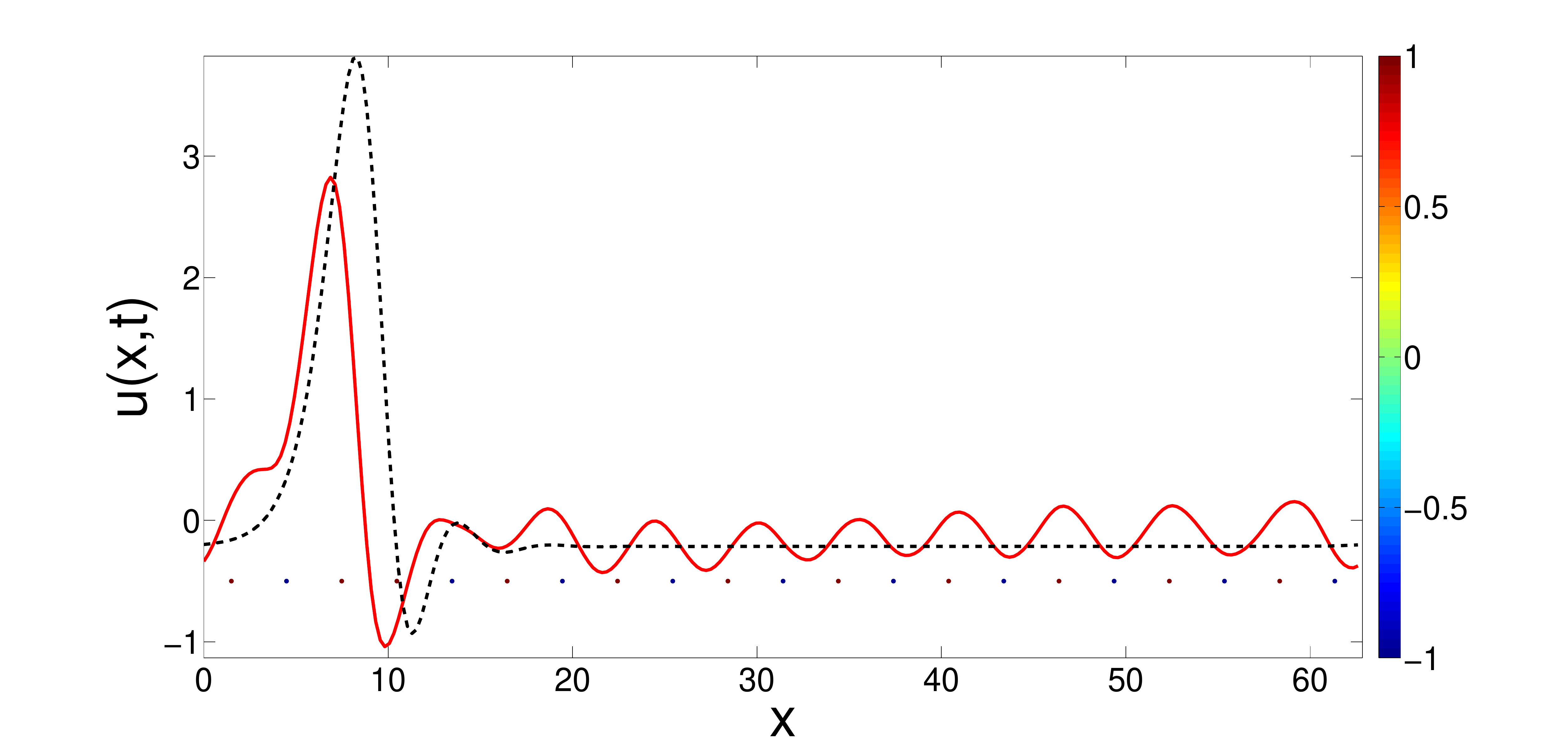}\label{fig:t30_delta}}
	\subfloat[$t = 60$]{\includegraphics[width=0.5\linewidth]{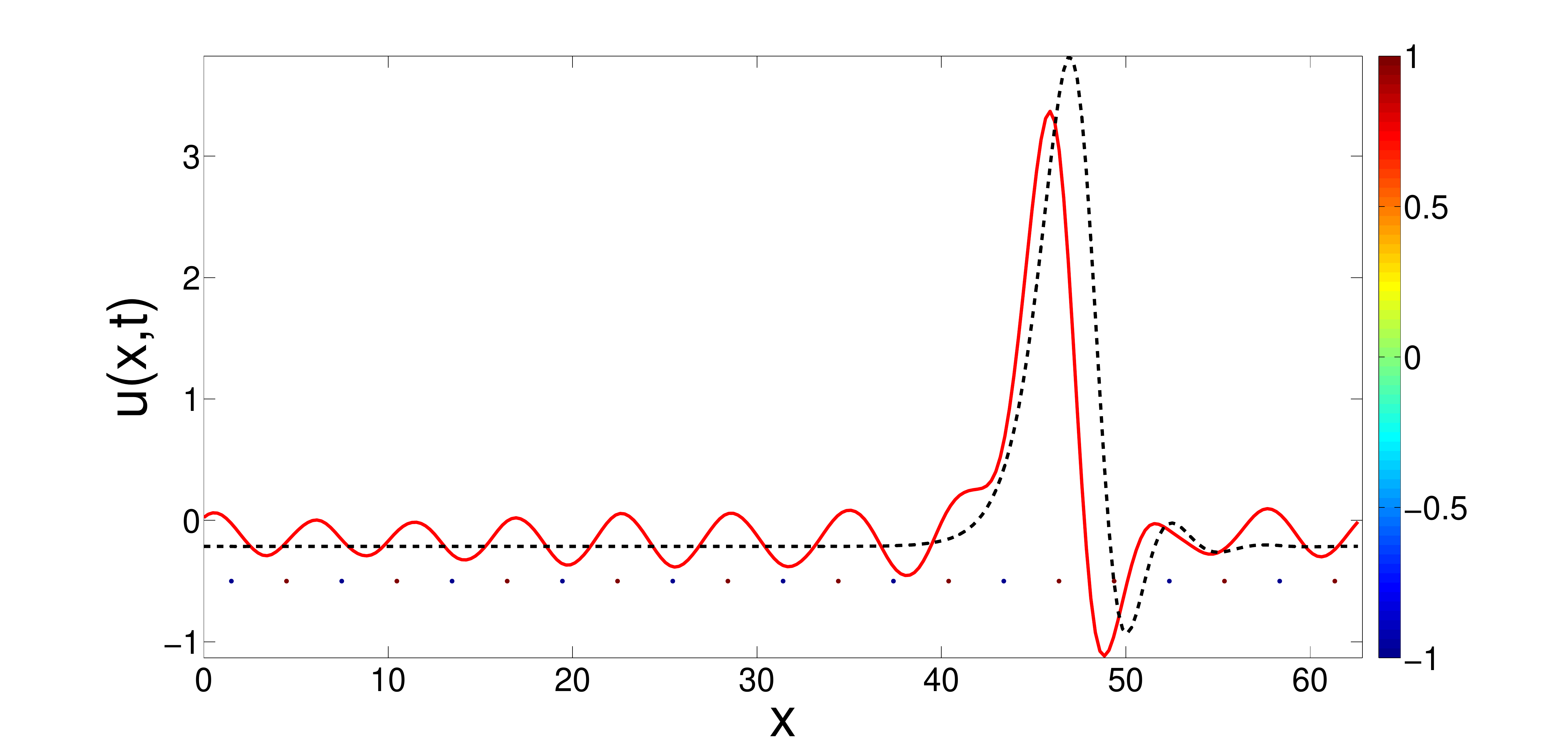}\label{fig:t60_delta}}

	\subfloat[$t = 90$]{\includegraphics[width=0.5\linewidth]{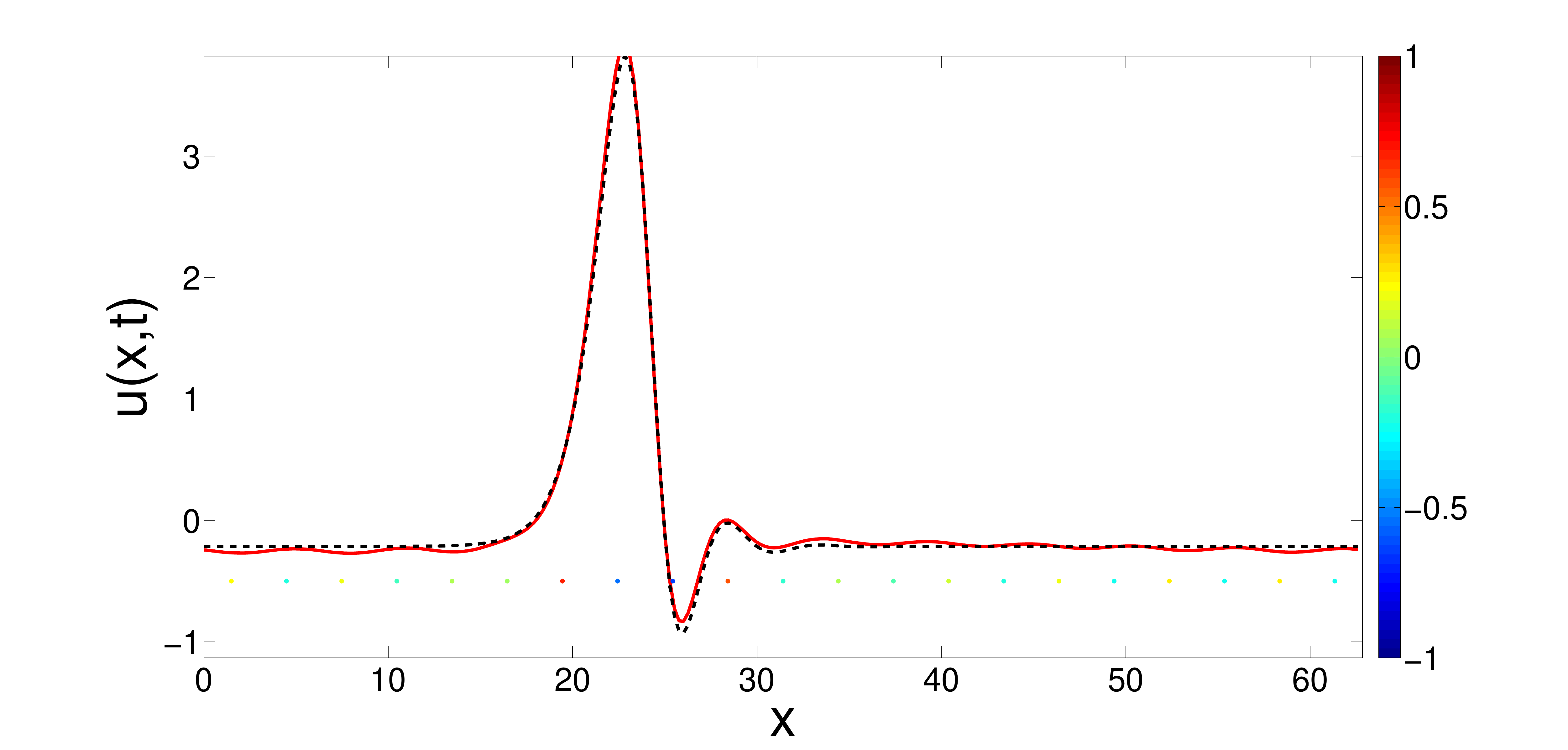}\label{fig:t90_delta}}
	\subfloat[$t = 200$]{\includegraphics[width=0.5\linewidth]{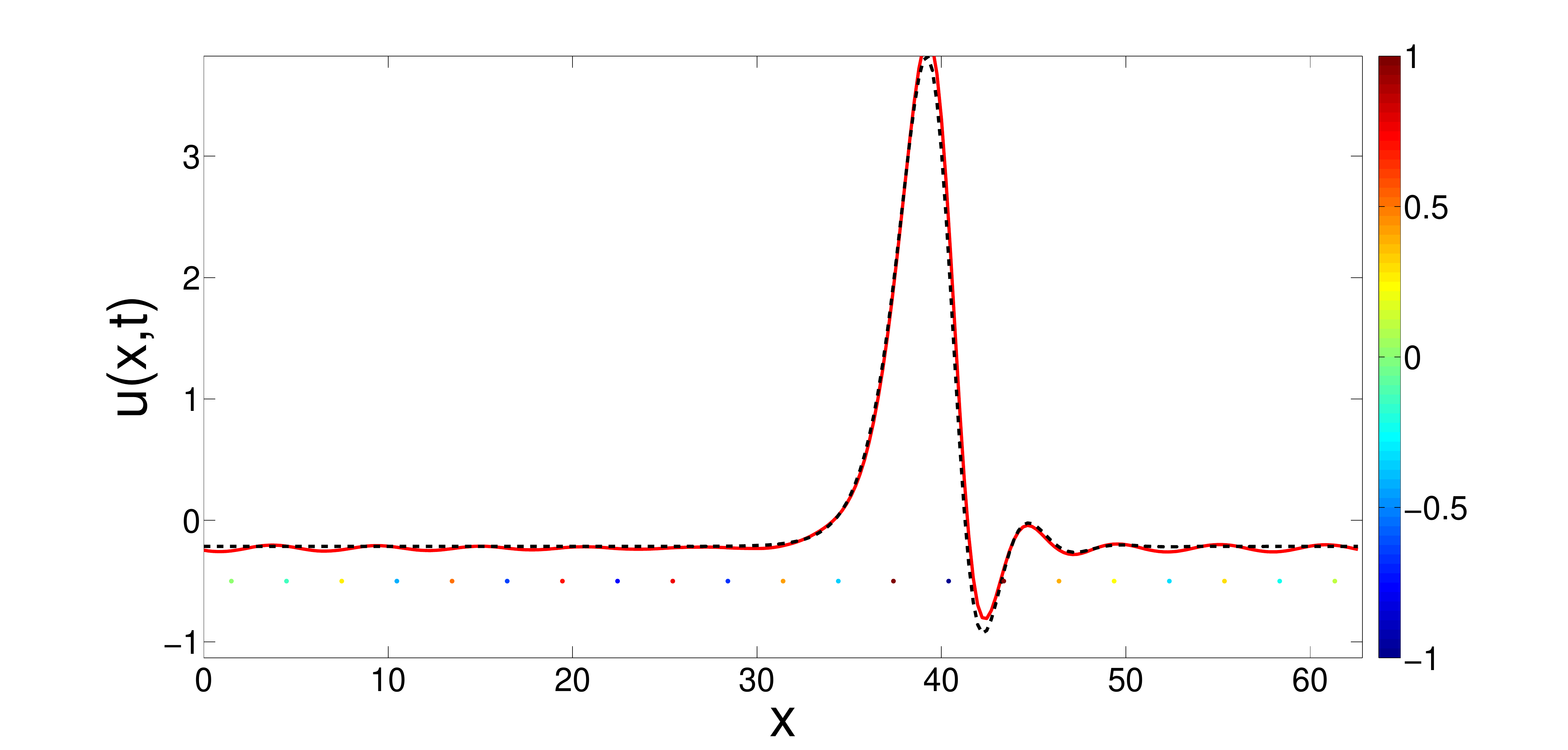}\label{fig:t200_delta}}
	 \caption{Snapshots of the time evolution of a stabilised travelling wave solution for $\mu = 0, \, \nu = 0.01$ and assuming uncertainty in the parameter $\delta$. Black dashed line is the desired travelling wave (which is the correct solution for $\delta = 0.03$) and red full line is the controlled solution assuming $\delta = 0.04$.}
	 \label{fig:compareControls_delta}
 \end{figure}

%
%
\section{Numerical Results}\label{sec:Numerical}
%
%
Section \ref{sec:stabilize} was devoted to proving rigorously that steady states and steady-state travelling wave
solutions of the generalised KS equation can be stabilised using linear feedback controls. The number of controls
is predicted to be at least as large as the number of linearly unstable modes, and robustness with respect to
uncertainty in the parameters $\nu$ and $\delta$ was also proved. In this section we implement the linear
feedback controls numerically and undertake an extensive computational study of the stabilisation and control in
practical situations.
\subsection{Construction of non-uniform steady states and travelling waves}
One of the main objectives of the present work is the stabilisation of unstable solutions of the gKS equation.
To obtain steady state solutions $\bar{u}(x)$ (in the absence of dispersion), we need to solve the equation
\begin{eqnarray}  \nu  \bar{u}_{xxxx} + \mu \hilb[\bar{u}_{xxx}] + \bar{u}_{xx} + \bar{u}
\bar{u}_x = 0,\label{eq:steadystate}\end{eqnarray}
in the interval $[0, 2 \pi]$, subject to periodic boundary conditions. Travelling waves of speed $c$ are found by looking for solutions of the form
$\bar{u}(x,t) = U(x-ct) = U(\xi)$ and solving
\begin{equation}\label{TWeq}
-cU' + \nu U'''' + \mu \hilb[U'''] + \delta U'''+  U'' +UU' = 0,
\end{equation}
subject to periodic boundary conditions, where primes denote differentiation with respect to $\xi$. We note that 
equation~\eqref{eq:steadystate} is a particular case of \eqref{TWeq}. Expressing the solutions in Fourier series
\begin{equation}\label{GalerkinTW}
U(\xi) = \sum_{n=1}^\infty U_n^s \sin(n\xi) + U_n^c \cos(n\xi),
\end{equation}
and substituting into \eqref{eq:steadystate} and \eqref{TWeq} we obtain an infinite system of nonlinear algebraic equations for the coefficients $U_n^s, \, U_n^c$, $n=1, \dots, \infty$, or for the coefficients and the velocity $c$, in the case of travelling waves. 
The resulting system of equations for steady states is
\begin{subequations}\label{coefsystem-SS}
\begin{align}
\label{first}(\nu n^4 - \mu n^3 -n^2)U_n^c +g_n^c  =   0, & \qquad n=1,\dots,\infty,\\
\label{second}(\nu n^4 - \mu n^3 - n^2)U_n^s + g_n^s  =  0, & \qquad n=1,\dots,\infty.
\end{align}
\end{subequations}
For travelling waves we can assume, without loss of generality due to translation invariance, that $U_1^s = 0$, to obtain
\begin{subequations}\label{coefsystem-TWs}
\begin{align}
\label{first}-(cn+\delta n^3)U_n^s + (\nu n^4 - \mu n^3 -n^2)U_n^c +g_n^c  =  0, & \qquad n=1,\dots,\infty,\\
\label{second} (cn+\delta n^3)U_n^c + (\nu n^4 - \mu n^3 - n^2)U_n^s + g_n^s  =  0, & \qquad n=2,\dots,\infty,\\
\label{third} (c+\delta)U_1^c + g_1^s  =  0, &
\end{align}
\end{subequations}
The systems were truncated and solved using a nonlinear solver (e.g. Matlab's \emph{fsolve}) to
find solutions to system \eqref{coefsystem-SS} by first setting $\mu = 0$ and carrying out a numerical continuation on $\nu$, and secondly by
fixing the desired value of $\nu$ and varying $\mu$.
For travelling waves we used continuation on $\nu, \, \mu$ and $\delta$. Without loss
os generality we also impose $c>0$: if $U(x-ct)$ is a solution of \eqref{TWeq} with $c<0$, then $-U(-x-(-c)t)$ is also a solution with $c>0$.

Given the Fourier coefficients and the velocity of a travelling wave, we can write the soluton of the KS equation as  
\begin{equation}
\begin{array}{rl}
\bar{u}(x,t) = U(x-ct) = &\sum_{n=1}^\infty \left(U_n^s\cos(nct) + U_n^c\sin(nct)\right)\sin(nx) +  \\
 & \sum_{n=1}^\infty \left( U_n^c\cos(nct) - U_n^s\sin(nct)\right)\cos(nx).\label{conversion}
\end{array}
\end{equation}

Our computational results are presented in the bifurcation diagram in Figure \ref{fig:mu>0} that depicts the variation of
the $L^2$-norm with $\nu$ of the steady states and travelling wave solutions of the gKS equation \eqref{eq:KS2} in
the absence of dispersion ($\delta=0$). Panels (a)-(d) correspond to $\mu=0,\,0.2,\,0.5,\,1.0$;
 steady-states are plotted with solid curves (blue online) and travelling waves with dashed curves (red dashed online).
We observe that the presence of the Hilbert transform increases the value of $\nu$ for which instability arises~\cite{Tseluiko2006}, 
but it does not change the shape of the bifurcation diagram. This is because the Hilbert transform term acts as a negative diffusion, see Equation~\eqref{eq:lambda}, and therefore its presence acts to
shift the bifurcation diagram to higher $\nu$, i.e. lower $\alpha=4/\nu$ as seen in the figure. 
We emphasise that the bifurcation diagrams in Figure~\ref{fig:mu>0} are not complete and
we expect additional unstable branches in analogy with known results for the KS equation \cite{Kevrekidis1990}.
This is not a restriction here, since we are interested in demonstrating the stabilisation of unstable
steady or travelling wave solutions, rather than the stabilisation of all such branches.
For the branches computed here, we analysed their stability
numerically by adding a small perturbation to the initial condition
(about $10\%$ or smaller of the amplitude of the steady state solution) and studied the time evolution to ensure that
we identified unstable steady solutions to be stabilised using linear feedback controls.

\subsection{Time dependent simulations and feedback control}
We used a Galerkin truncation~\cite{Trefethen2000} for the spatial discretisation of the PDE, with the number of modes varying between $32$, $64$ and $128$ depending on the number of unstable modes. Time integration is
carried out using second order implicit-explicit backward differentiation formulae (BDF) schemes \cite{Akrivis2011,Akrivis2012}.

To construct the matrix $K$ necessary for the stabilisation of the steady states, we used \emph{Matlab}'s command \emph{place}. 
Given the matrices $A$ and $B$, we sought a matrix $K$ such that the eigenvalues of the matrix $A+BK$ were:
\begin{itemize}
\item $-1$ if it is the eigenvalue corresponding to the constant eigenfunction $\frac{1}{\sqrt{2\pi}}$.
\item $\pm\lambda$ if $\lambda$ is an eigenvalue of $A$ with negative/positive real part.
\item $-10\delta\lambda$  instead of $-\lambda$ if $\delta>0$. We do this because the
 amplitude of the solutions grows with $\delta$ - \cite{Kawahara1998}, so we need to account for this when building the controls.
\end{itemize}
We begin by presenting numerical results in the absence of electric fields and dispersion ($\mu=0$, $\delta=0$) and for
two values of $\nu=0.2$ and $\nu=0.4$ (note that the number of unstable eigenvalues is $2l+1$ where $l=[\nu^{-1/2}]$,
where $[\cdot]$ denotes the integer part.
The number of controls used is $5$ and $3$, respectively, i.e. equal to $2l+1$; these are placed equidistantly and the
initial condition is
\[
u_0(x) = \frac{1}{\sqrt{2\pi}} + \frac{1}{\sqrt{\pi}}\sum_{n = 1}^5 \left(\sin(nx) + \cos(nx)\right).
\]
The results are presented in Figure \ref{fig:compareChristofides} and clearly show that the system is controlled to the
zero solution long before the final computed time of $t=5$; our results are also in good agreement with those in  \cite{Christofides2000}.
\begin{figure}[h!]
 \centering
	\subfloat[$\nu = 0.2$]{\includegraphics[width=0.5\linewidth]{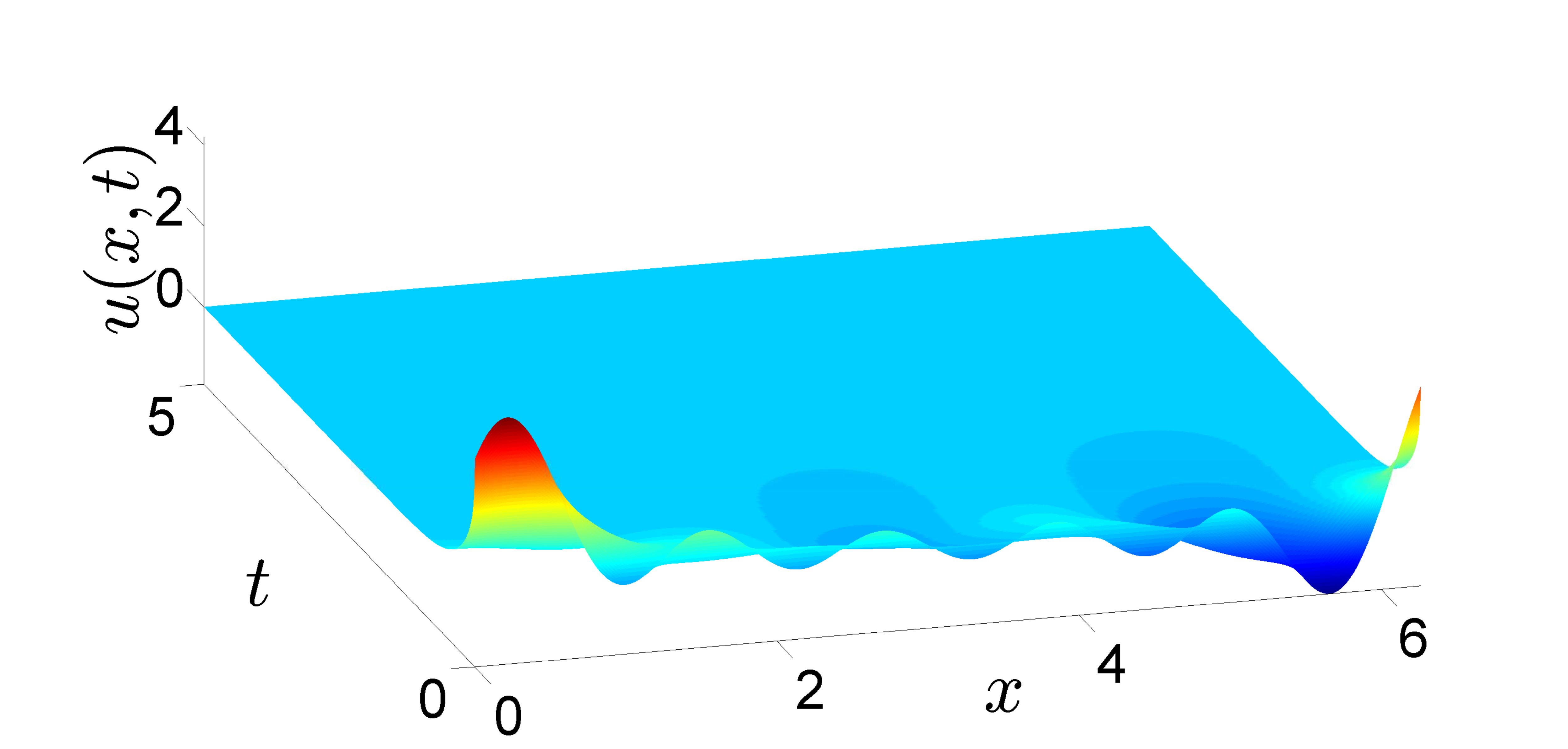}\label{fig:nu02}}
	\subfloat[$\nu = 0.4$]{\includegraphics[width=0.5\linewidth]{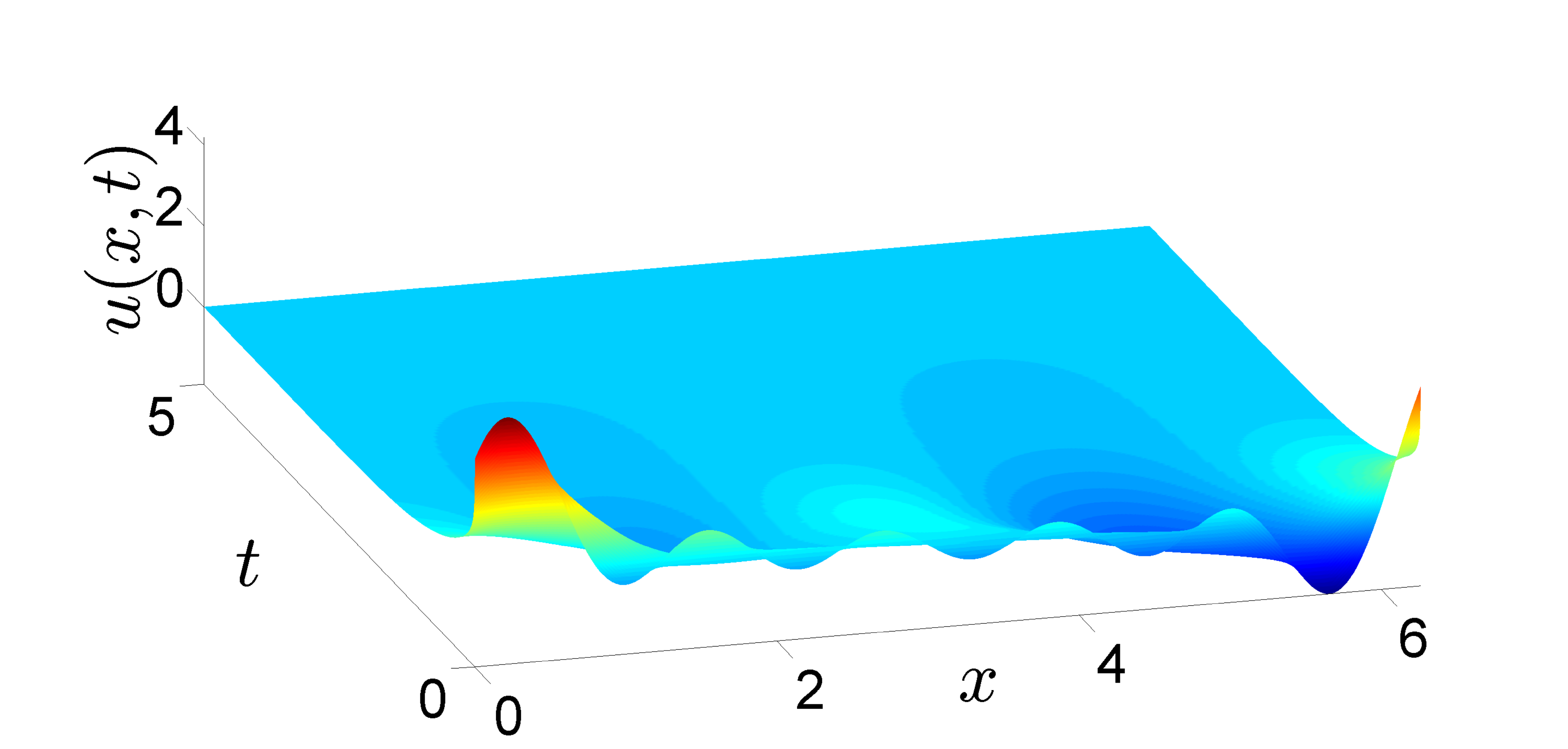}\label{fig:nu04}}
	 \caption{Spatiotemporal evolution showing stabilisation to the zero solution of the KS equation for (a) $\nu = 0.2$ ($\alpha = 20$),
	   and (b) $\nu = 0.4$  ($\alpha = 10$).}
	 \label{fig:compareChristofides}
 \end{figure}

Results analogous to those presented in Figure \ref{fig:compareChristofides} were found regarding the
stabilisation of the zero solution to the KS equation in the presence of an electric field. 
In what follows we use the following initial condition unless stated otherwise:
\begin{equation}\label{eq:initial}
u_0(x) = \frac{1}{\sqrt{\pi}}\left(\sin(x) + \cos(x)\right).
\end{equation}
Note that the number of unstable modes is $2l+1$ where $l=\left[\frac{\mu + \sqrt{\mu^2+4\nu}}{2\nu}\right]$: see 
Proposition \ref{prop1}- and this is the number of controls used in the numerical experiments.
The numerical results for $\nu = 0.2$ and $\mu = 0.5$ with $5$ equidistant controls
are shown in Figure~\ref{fig:nu02mu05}, where we again clearly observe
stabilisation to the zero solution.
\begin{figure}[h!]
	\centering
	\includegraphics[scale=.3]{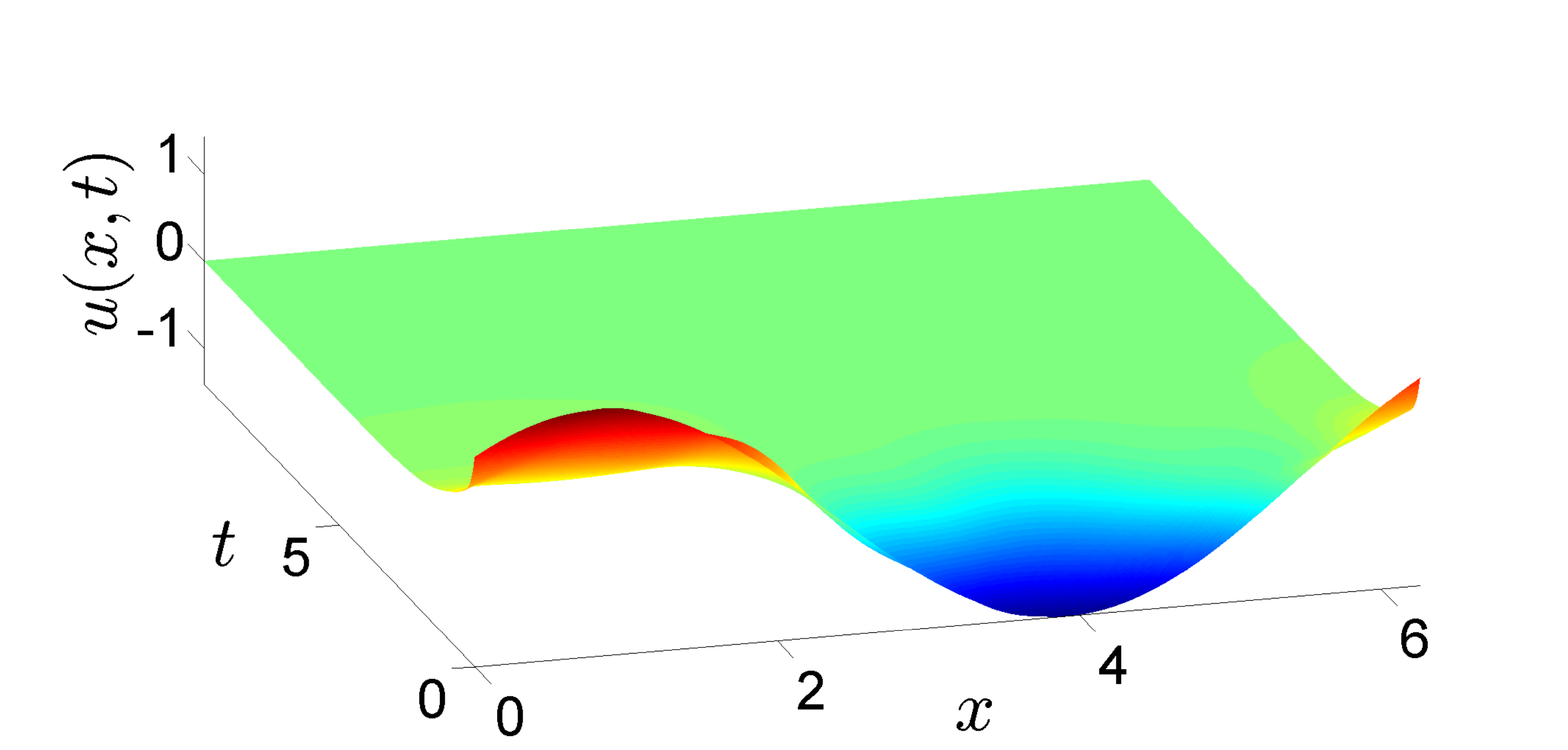}
	\caption{Spatiotemporal evolution showing stabilisation to the zero solution of the KS equation in the presence of an electric field 
	with $\mu = 0.5$ and $\nu = 0.2$ ($\alpha = 20$).}
	\label{fig:nu02mu05}
\end{figure}

Having shown the stabilisation of zero states for relatively small values of $\nu$, we turn next to the stabilisation of
nontrivial steady states of the generalised KS equation \eqref{eq:KS2}, in the absence of dispersion.
We illustrate the feasibility of our control methodology for two typical cases that yield unstable steady states as computed
in the bifurcation diagram of Figure~\ref{fig:mu>0}. In the first case we use $\nu = 0.1115, \, \mu = 0$, and in the second
$\nu = 0.35, \, \mu = 0.3$. In both cases we used $2l+1$ equidistant controls, i.e., the same as the number of unstable eigenvalues of the system.
The results of our numerical experiments are presented in Figures~\ref{fig:nu01115steadystate} and~\ref{fig:nu035mu03steadystate}, respectively.
\begin{figure}[h!]
	\centering
	\includegraphics[width = 0.8\linewidth]{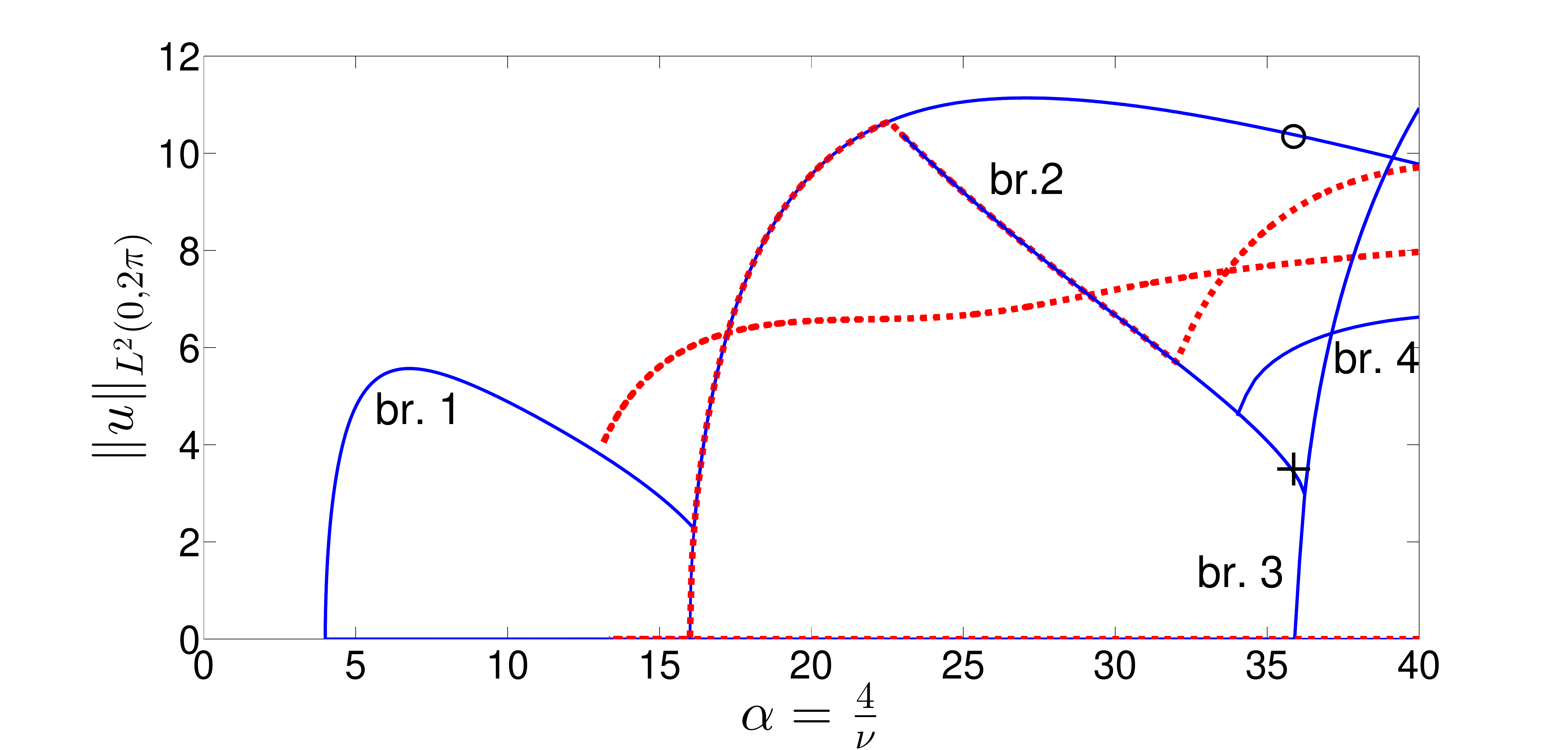}
	\caption{Zoom in of Panel \ref{fig:mu>0}(a) with $\nu \in [0.1,1]$. Branches are labelled as used in Tables~\ref{tab2.1}-\ref{tab2.3}: Branch 1 unimodal steady states;
	branch 2 - bimodal steady states; branch 3 - trimodal steady states; branch 4 - tetramodal steady states.
	The cross and open circle symbols indicate the steady states (stable and unstable, respectively) that are shown in the Figure \ref{fig:nu01115steadystate}.}
	\label{fig:zoomBD}
\end{figure}
When $\nu=0.1115,\,\mu=0$, i.e. $\alpha\approx 35.87$, both stable and unstable steady states coexist
and the solution of the PDE with a given initial condition, e.g. \eqref{eq:initial}, evolves to the most attracting
stable state. This is shown in figure~\ref{fig:nu01115steadystate}(a) where it is seen that the solution evolves to a stable
bimodal steady state, marked with a circle in Figure~\ref{fig:zoomBD}. We are interested in using feedback control to stabilise one of the coexisting unstable steady states, and the
results of achieving this are presented in Figures \ref{fig:nu01115steadystate}(b)-(c); panel \ref{fig:nu01115steadystate}(b) shows the
evolution of the initial condition \eqref{eq:initial} using $2l+1=5$ equidistant controls and stabilisation of the steady state marked with a $+$ in Figure~\ref{fig:zoomBD} is achieved relatively quickly
after approximately $2$ time units. The evolution of the amplitudes of the $5$
applied controls is shown in Figure \ref{fig:nu01115steadystate}(c), and we see that the required energy tends to values
very close to zero as time evolves. Note that the control amplitudes remain small and close to zero once the unstable
controlled state is reached, but they cannot be identically zero due to the unstable nature of the controlled solution.
\begin{figure}[h!]
	\centering 
	\subfloat[Uncontrolled Solution]{	\includegraphics[width = 0.5\linewidth]{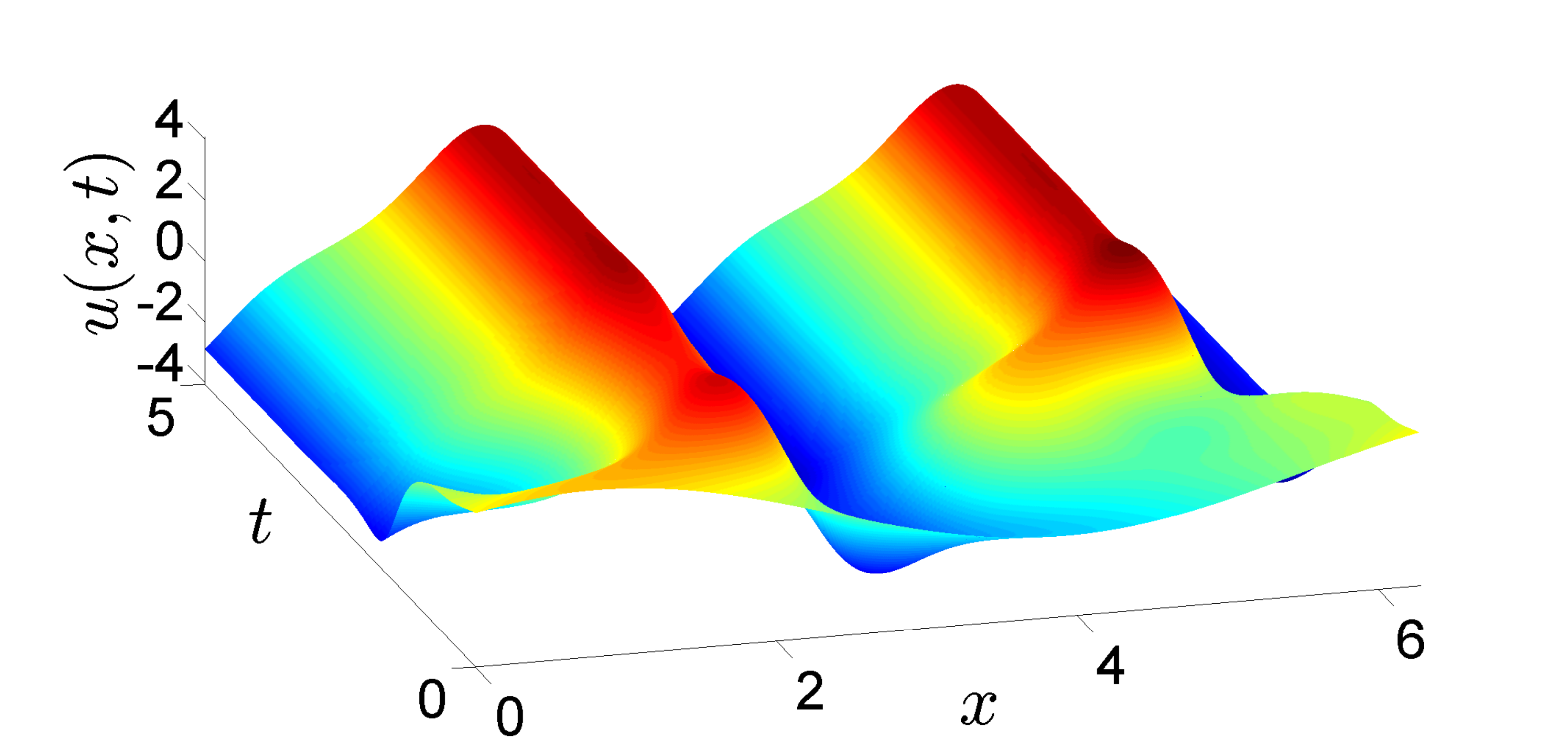}\label{unc}}
	\subfloat[Controlled Solution]{	\includegraphics[width = 0.5\linewidth]{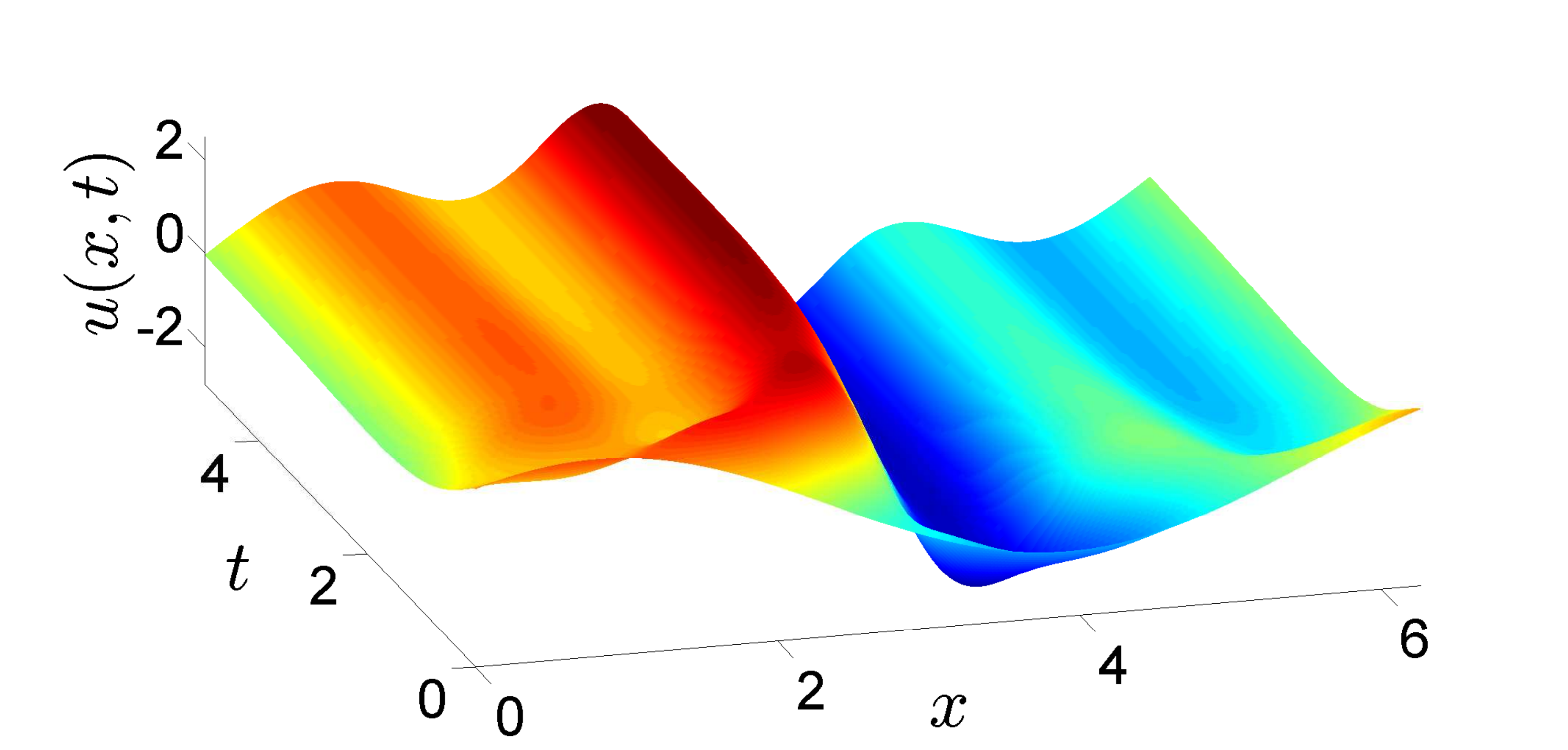}\label{contr}}

	\subfloat[Controls]{	\includegraphics[width = 0.6\linewidth]{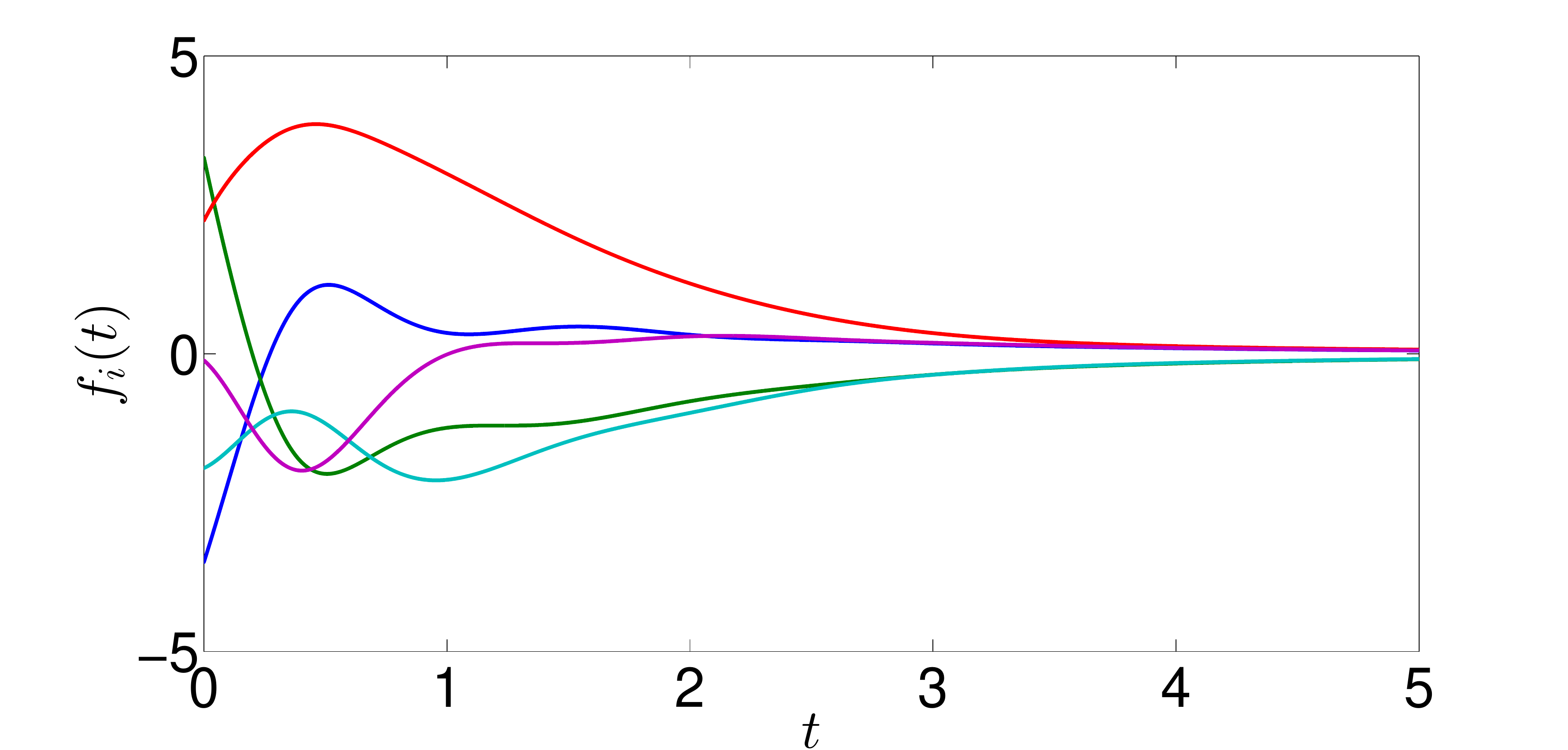}\label{ctrls}}
	 \caption{Control of non-uniform solutions of the KS equation for $\nu = 0.1115$; panel (a) spatiotemporal
	 evolution without controls (solution belongs to branch 1 of the bifurcation diagram in Figure~\ref{fig:bifdiag3});
	 panel (b) controlled to the steady state in branch 4 of the bifurcation diagram in Figure~\ref{fig:bifdiag3};
	 panel (c) evolution of the amplitude of the $5$ applied controls.}
	 \label{fig:nu01115steadystate}
\end{figure}
Figure \ref{fig:nu035mu03steadystate} shows the results for $\nu=0.35,\,\mu=0.3$. The solution we choose to stabilise
 at these values is an unstable 
bimodal steady state and Figure \ref{fig:nu035mu03steadystate} shows how it is stabilised using $2l+1=5$ controls.

\begin{figure}[h!]
	\centering 
	\includegraphics[scale=.3]{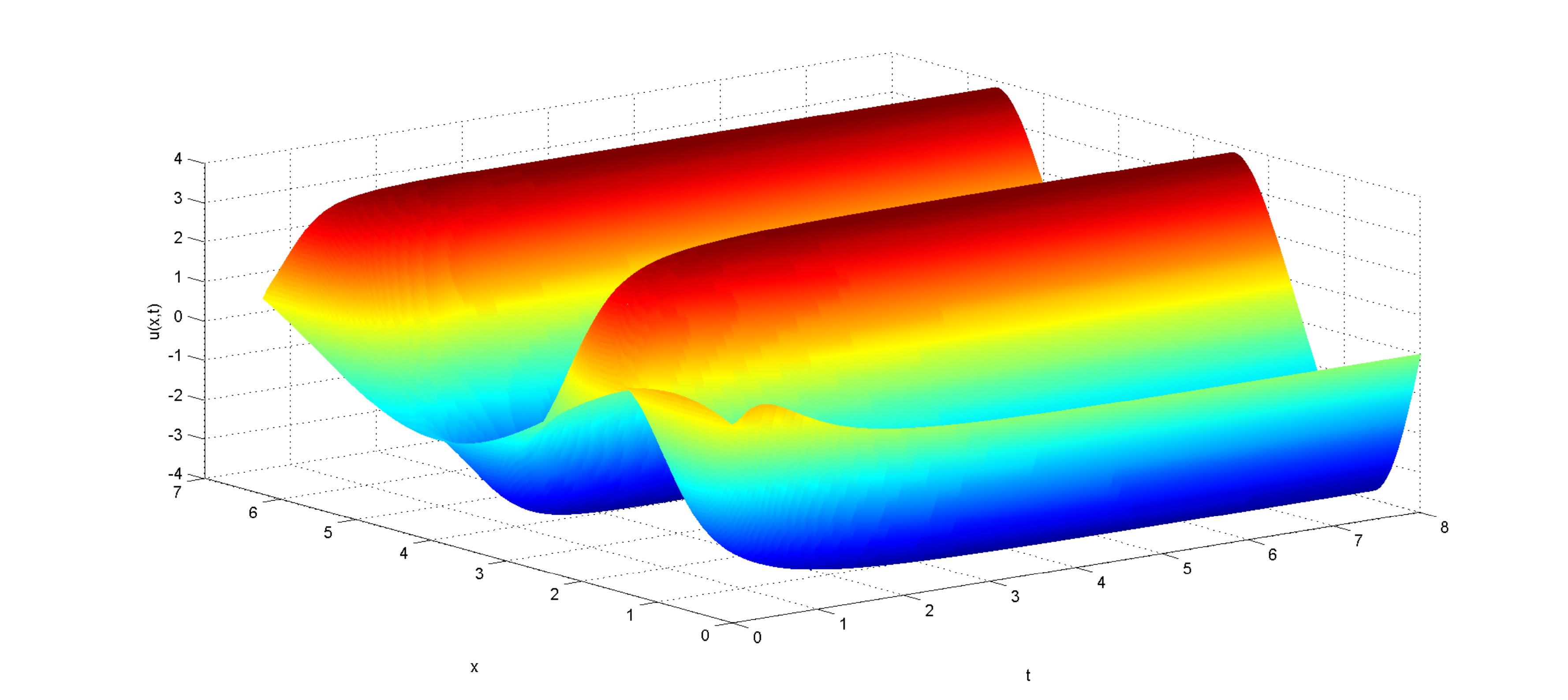}
	 \caption{Spatiotemporal evolution of the stabilised steady state of the Kuramoto-Sivashinsky equation for $\nu = 0.35$ ($\alpha\approx 11.43$), $\mu = 0.3$ .}
	 \label{fig:nu035mu03steadystate}
\end{figure}

Our last task is to stabilise travelling wave solutions of the equation with and without dispersion and electric field. Figure~\ref{fig:delta0} illustrates the stabilisation of three different travelling wave solutions to the KS equation \eqref{eq:KS2} with no dispersion or electric field ($\delta = \mu = 0$) and for a small value of $\nu$ ($\nu = 0.01$) which corresponds to a very large domain ($L = 20\pi \approx 62$) that enables the existence of single pulse travelling waves as well as two- or three-pulse bound states. However, due to the small value of $\nu$, when solving the PDE the initial condition evolves to a solution that exhibits the spatiotemporal chaotic behaviour that is characteristic of this equation. Panel~\ref{fig:delta0}(a) shows this chaotic behaviour while Panels~(b)-(d) show the evolution of the controlled solution to $1$, $2$ and $3$ pulses, respectively. We used $m=21$ equidistant controls in each case.

\begin{figure}[!htbp]
	\subfloat[Uncontrolled solution]{\includegraphics[width=0.5\linewidth]{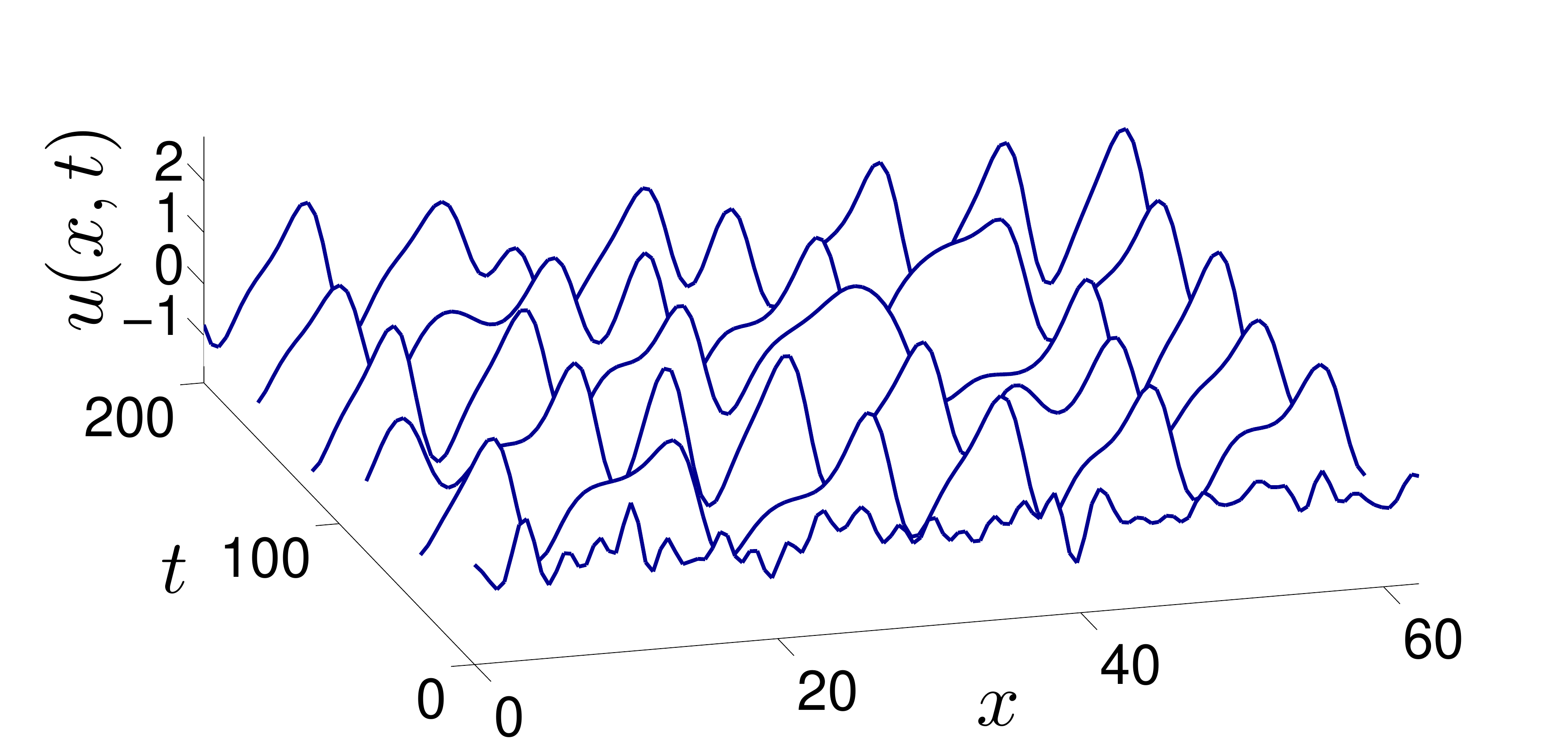}\label{fig:delta0_unc}}
	\subfloat[Controlled to one pulse]{\includegraphics[width=0.5\linewidth]{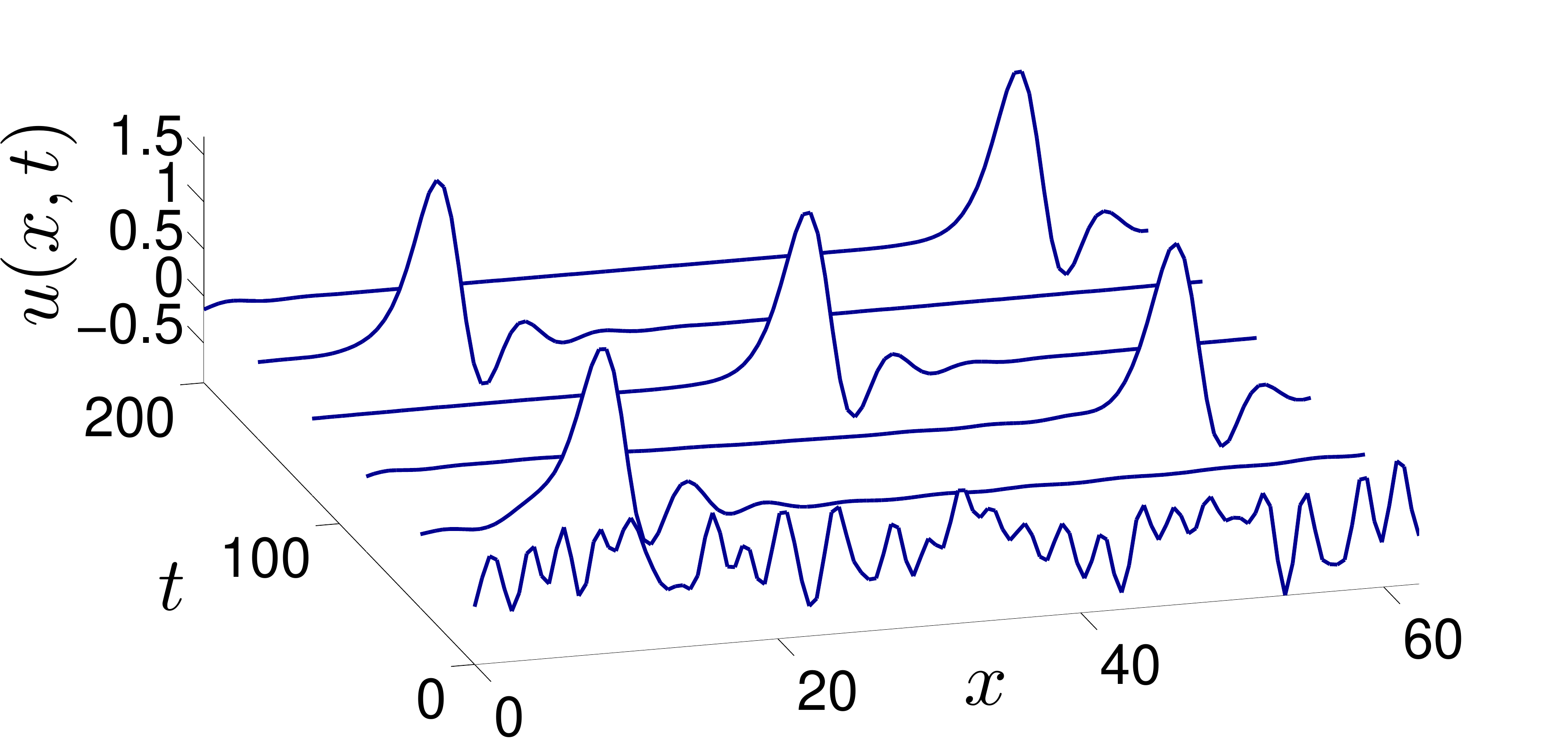}\label{fig:delta0_1pulse}}

	\subfloat[Controlled to two pulses]{\includegraphics[width=0.5\linewidth]{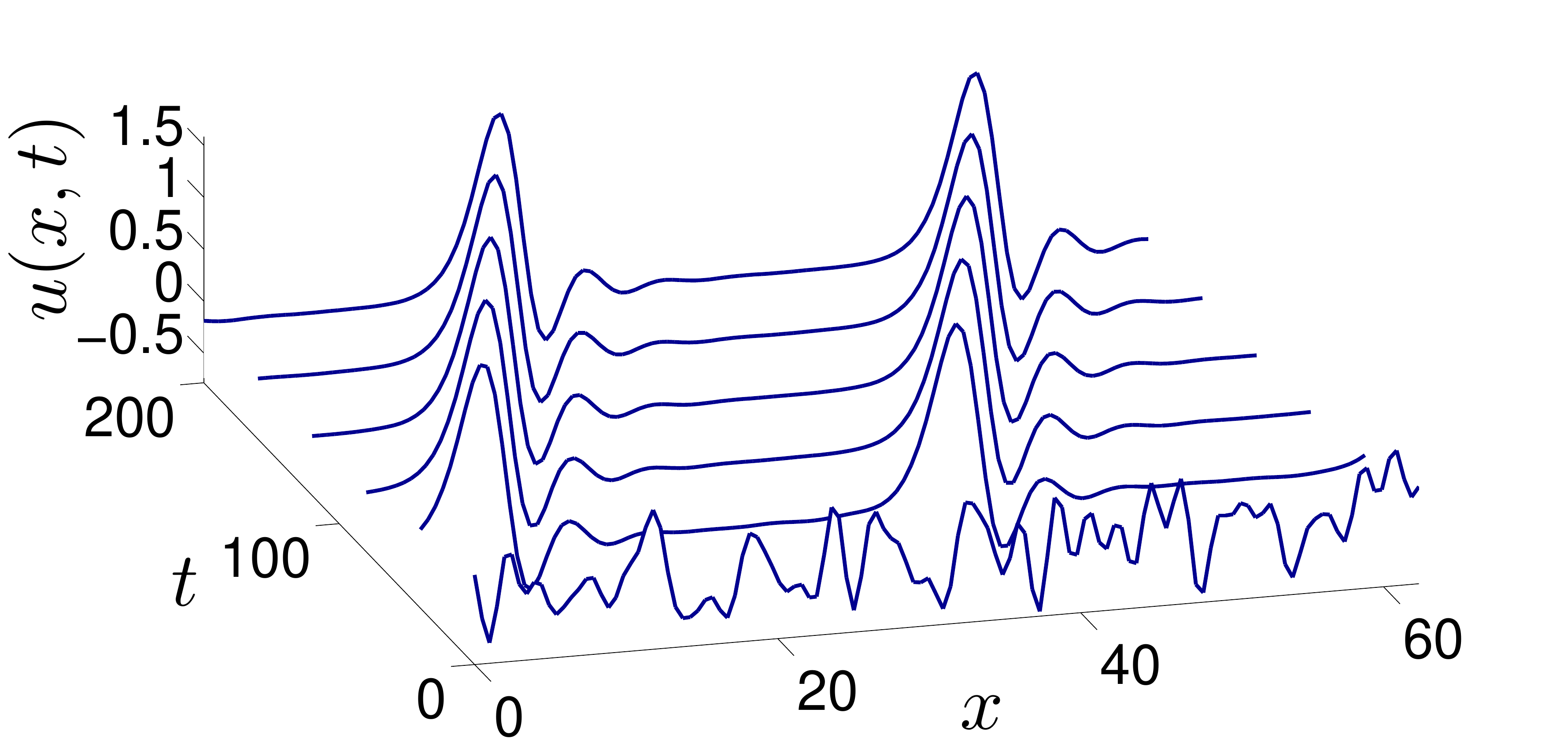}\label{fig:delta0_2pulses}}
	\subfloat[Controlled to three pulses]{\includegraphics[width=0.5\linewidth]{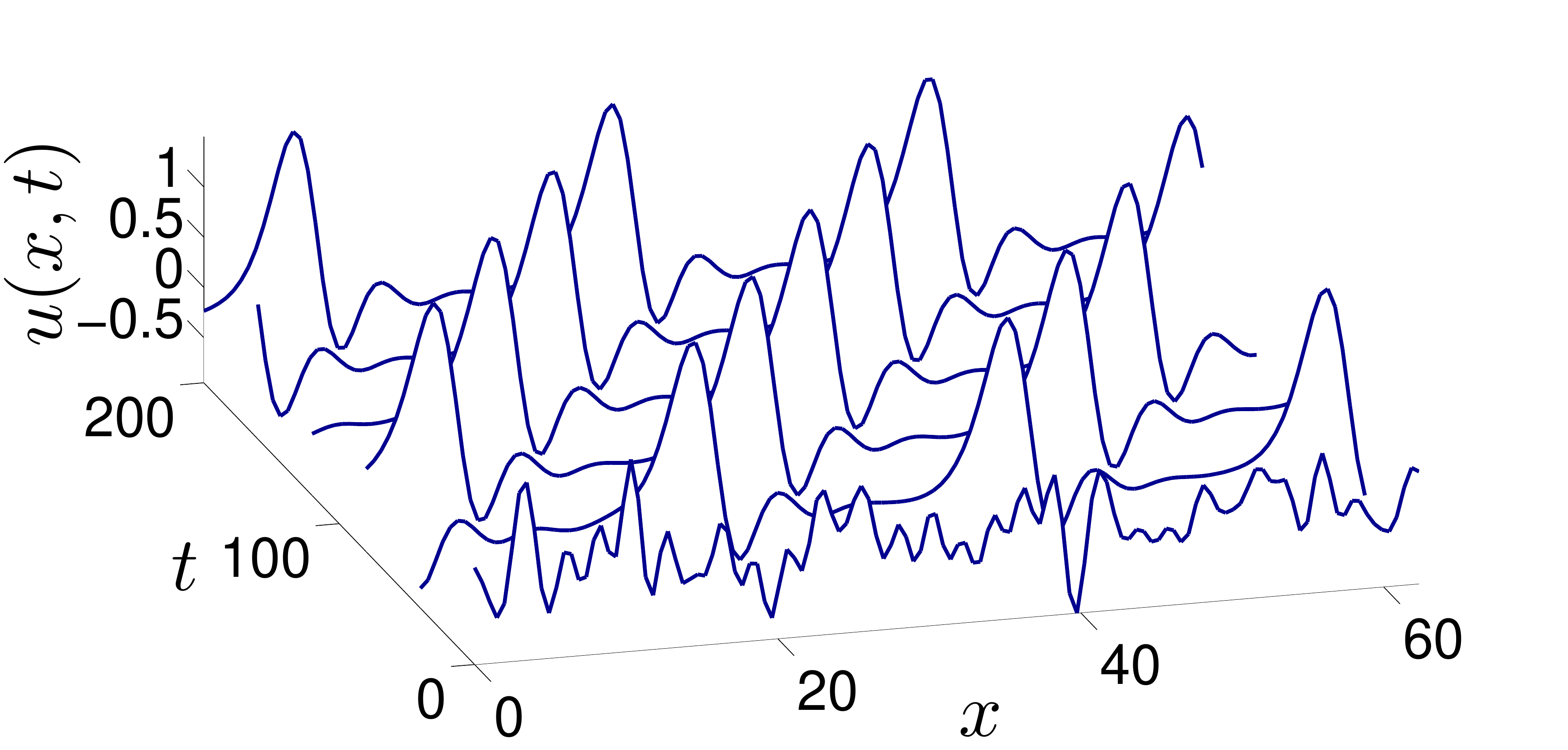}\label{fig:delta0_3pulses}}

	 \caption{Solution to the KS equation for $\nu = 0.01$ \eqref{fig:delta0_unc} with no controls, and controlled to \eqref{fig:delta0_1pulse} one solitary pulse, \eqref{fig:delta0_2pulses} two solitary pulses and \eqref{fig:delta0_3pulses} three solitary pulses.}
	 \label{fig:delta0}
 \end{figure}

The control algorithms presented and analysed here also apply to the gKS equation with $\delta>0$. The findings are similar and for brevity we do not present them here.
Detailed results and animations can be found in \cite{Gomes2015} and its supplemental material.

\section{Optimal control for the generalised Kuramoto-Sivashinsky equation}\label{sec:optimization}

In practical applications it is important to apply controls that minimise the cost associated with their use, and this leads to
considerations of an optimal control problem based on some measure of the energy cost of the controls.
In what follows we
consider the energy of the controls given by their $L^2$-norm. Since the controls decay to almost zero relatively fast in time 
(see Figure \ref{fig:nu035mu03steadystate}, for instance), we expect that minimising their $L^2$-norm should decrease their amplitude.

The objective, then,  is to achieve control of the zero solution or other
unstable steady-state or travelling wave solutions of the gKS, and to do this while spending the least energy possible. 
To that end, we consider a cost functional that includes the distance between the solution and the desired state, as well
as the $L^2$-norm of the controls used. 
Different distance norms $\| u-\bar{u}\|$ can be used and in our computations we employ the 
$L^2$, $H^1$ or $H^2$ norms. 
The reason we consider different norms is that the solutions are expected to belong to $H^2(0,2\pi)$, and we wish to analyse the effects of the regularity and the oscillations of the solutions on the cost functional.

In flow control it is possible to use point actuator functions 
\cite{Antoniades2001,Christofides2000a,Christofides2000,Dubljevic2010}, implying that we can take $b_i(x) = \delta (x-x_i)$. 
Assuming that the cost of placing a control at $x_i$ is the same for all actuator positions $x \in (0,2\pi)$, 
it makes sense to seek a solution that minimises the norms of the control functions $f_i(t)$. 
Since the delta functions in the feedback controls are not $L^2$ functions, the standard results of 
constrained optimisation for PDEs \cite{Lions1971,Troltzsch2010} do not apply. 
Because of this hurdle we will first prove existence of optimal controls for the case of general controls, $f(x,t) \in L^2(0,T;\dot{L}^2(0,2\pi))$, i.e.  mean zero spatially periodic controls in $L^2(0,2\pi)$ that are also $L^2$ functions of time, and focus on 
the case of feedback controls where we can apply standard optimisation techniques.

We consider cost functionals of the form
\begin{equation}\label{costfunc}
\cost\left(u, F\right) = \displaystyle{\frac{1}{2}\int_0^T \| u(\cdot,t)-\bar{u}\|^2 \ dt + \frac{1}{2} \| u(\cdot, T) - \bar{u}\|^2 } \displaystyle{+ \frac{\gamma}{2} \int_0^T \sum_{i=1}^m f_i(t)^2 \ dt}
\end{equation}
where $\bar{u}$ is the desired steady state, $F = [f_1 \ \cdots \ f_m]$, and the norm (e.g. $L^2$, $H^1$ or $H^2$) is left unspecified.
The choice of the parameter $\gamma$ depends on how large we are willing to allow the norm of the controls to become: 
if we need to maintain a small norm of the controls while allowing the solution to be considerably different from the steady state, 
then we use $\gamma> 1$. If, on the other hand, we have a very large amount of energy to spend on the controls and want the solution to be as close as possible to the desired steady state, then we choose $\gamma<< 1$ so that the weight of the controls does not influence 
significantly the value of the cost functional.
The terminal time term $ \frac{1}{2} \| u(\cdot, T) - \bar{u}\|^2 $ is introduced to provide us with a condition for the final-value problem obtained when solving the adjoint equation for the optimisation problem. Before proceeding to the optimisation problem it is useful
to introduce the following definitions.

\begin{defn}\label{def1}
The space of admissible contols, $F_{ad}$, is a bounded convex subset of $L^2(0,T;\dot{L}^2(0,2\pi))$.
\end{defn}

\begin{defn}\label{def2}
A control $f^* \in F_{ad}$ is said to be optimal, and $u^* = u(f^*)$ is the associated optimal state, if
$\cost(u(f^*),\bar{u},f^*) \leq \cost(u(f),\bar{u},f) \ \forall f \in F_{ad}.
$
\end{defn}
Our numerical experiments presented in Section~\ref{sec:num-optcontr} suggest that, 
given an initial condition and a desired steady state, there exists at least one optimal placement of the control actuators for every value of $\nu$ and $\mu$. 
However, here we prove existence of an optimal control in the case of an open-loop control
using the quadratic cost functional
\begin{equation}\label{costfuncquad}
\cost\left(u,f\right) = \displaystyle{\frac{1}{2}\int_0^T \| u(\cdot,t)-\bar{u}\|_{L^2}^2 \ dt + \frac{1}{2} \| u(\cdot, T) - \bar{u}\|_{L^2}^2 }
 \displaystyle{+ \frac{\gamma}{2} \int_0^T\int_0^{2\pi}  f(x,t)^2 \ dx \ dt}.
\end{equation}
The point actuated controls in the form of delta functions are not in $L^2$ and hence an analogous proof in this case
requires distribution theory which is beyond the scope of the present study.
The optimisation problem is:
\begin{subequations}\label{optmrob}
\begin{align}
\label{costfunctional} \mathrm{minimise} \quad&  \cost\left(u,f\right) \\
\label{stateequation} \mathrm{subject \ to} \quad &  \displaystyle{ u_t + \nu u_{xxxx} +  u_{xx} + uu_x = f(x,t),} \\
\label{initialcondition} &u(x,0) = u_0(x)\in \dotH{2}(0,2\pi), \\
\label{boundaryconditions} &\frac{\partial^j u}{\partial x^j}(x+2\pi) = \frac{\partial^j u}{\partial x^j}(x), \quad j = 0,1,2,3,\\
\label{restriction1}& f \in F_{ad}.
\end{align}	
\end{subequations}

The main result of this section is the following.
\begin{thm}\label{theorem}
Assume that $F_{ad}\subset L^2(0,T;\dot{L}^2(0,2\pi))$. Then~\eqref{optmrob} has at least one optimal control $f^*$ with associated optimal state $u^*$.
\end{thm}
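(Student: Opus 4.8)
The plan is to apply the direct method of the calculus of variations. The first and essential ingredient is well-posedness of the state equation \eqref{stateequation} together with \emph{finite-time} a priori estimates: for every $u_0\in\dotH{2}(0,2\pi)$ and $f\in L^2(0,T;\dot L^2(0,2\pi))$ there is a unique solution $u=u(f)\in C([0,T];\dotH{2})\cap L^2(0,T;\dotH{4})$ with $u_t\in L^2(0,T;\dot L^2)$, and a continuous nondecreasing function $\Phi$ with
\[
\sup_{0\le t\le T}\|u(t)\|_{\dotH{2}}^2+\int_0^T\|u(t)\|_{\dotH{4}}^2\,dt+\int_0^T\|u_t(t)\|^2\,dt\ \le\ \Phi\!\left(\|u_0\|_{\dotH{2}},\int_0^T\|f(t)\|^2\,dt\right).
\]
This follows from a Galerkin approximation and the standard energy bootstrap for the one-dimensional KS equation: test against $u$, then against $u_{xx}$ and $u_{xxxx}$, control the anti-diffusion $u_{xx}$ and the nonlinearity $uu_x=\tfrac12(u^2)_x$ by the dissipation $\nu u_{xxxx}$ via interpolation and Young's inequality (using $\dotH{2}\hookrightarrow C^1$ as in \eqref{inftyestimate}), and absorb the forcing into $\int_0^T\|f\|^2$. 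Since $F_{ad}$ is bounded in $L^2(0,T;\dot L^2)$, this estimate is uniform over $f\in F_{ad}$.

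Next take a minimising sequence $f_n\in F_{ad}$ with $\cost(u(f_n),f_n)\to\inf_{f\in F_{ad}}\cost(u(f),f)$. Since $F_{ad}$ is bounded, convex and closed, a subsequence (not relabelled) satisfies $f_n\rightharpoonup f^*$ weakly in $L^2(0,T;\dot L^2)$ with $f^*\in F_{ad}$ (Mazur). By the uniform estimate the states $u_n:=u(f_n)$ are bounded in $L^\infty(0,T;\dotH{2})\cap L^2(0,T;\dotH{4})$ with $\partial_t u_n$ bounded in $L^2(0,T;\dot L^2)$, so along a further subsequence $u_n\rightharpoonup u^*$ weakly in $L^2(0,T;\dotH{4})$, weakly-$*$ in $L^\infty(0,T;\dotH{2})$, and, by the Aubin--Lions--Simon lemma ($\dotH{4}\hookrightarrow\hookrightarrow\dotH{3}\hookrightarrow\dot L^2$), strongly in $L^2(0,T;\dotH{3})$ and in $C([0,T];\dot L^2)$. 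In particular $u_n(T)\to u^*(T)$ in $\dot L^2$, and since $u_n,u_{n,x}$ are bounded in $L^\infty((0,2\pi)\times(0,T))$, the strong convergence in $L^2(0,T;\dotH{1})$ gives $u_nu_{n,x}\to u^*u^*_x$ strongly in $L^2(0,T;\dot L^2)$.

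Passing to the limit in the weak formulation of \eqref{stateequation} — linear terms by weak convergence of $u_n$ and $f_n$, the nonlinearity by the strong convergence just established, the initial datum by convergence in $C([0,T];\dot L^2)$ — shows that $u^*$ solves the state equation with control $f^*$, hence $u^*=u(f^*)$ by uniqueness. The cost functional \eqref{costfuncquad} is then sequentially lower semicontinuous along $(u_n,f_n)$: the tracking terms $\tfrac12\int_0^T\|u-\bar u\|_{L^2}^2$ and $\tfrac12\|u(T)-\bar u\|_{L^2}^2$ are continuous under these convergences, and $f\mapsto\tfrac\gamma2\int_0^T\!\!\int_0^{2\pi}f^2$ is convex and norm-continuous, hence weakly lower semicontinuous. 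Therefore $\cost(u^*,f^*)\le\liminf_{n\to\infty}\cost(u_n,f_n)=\inf_{f\in F_{ad}}\cost(u(f),f)$, so $f^*$ is an optimal control with associated optimal state $u^*$.

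The main obstacle is the first step: the estimates \eqref{L2estimates}--\eqref{estimatesDispersion} recalled earlier are asymptotic ($\limsup_{t\to\infty}$) and for the \emph{unforced} equation, so one must instead derive honest finite-time energy estimates on $[0,T]$ that track $\|f\|_{L^2(0,T;\dot L^2)}$ and are uniform over the minimising sequence, climbing far enough in the Sobolev scale (up to $\dotH{2}$ in $L^\infty_t$ and $\dotH{4}$ in $L^2_t$) that the Aubin--Lions compactness is strong enough to pass to the limit in $uu_x$. The remaining steps are routine, and the argument is unchanged if the electric-field and dispersion terms $\mu\hilb[u_{xxx}]+\delta u_{xxx}$ of the full gKS equation are retained, since both are skew-adjoint on $\dot L^2$ and do not affect the energy estimates.
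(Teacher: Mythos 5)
Your argument is correct and, at the skeleton level, it is the same direct-method proof as the paper's: minimising sequence, weak convergence in the control, compactness in the state, passage to the limit in the Burgers nonlinearity, and weak lower semicontinuity of $\cost$. The execution, however, differs in the key compactness step. The paper minimises over admissible \emph{pairs} $(u,f)$ with the PDE as an equality constraint $e(u,f)=0$, obtains boundedness of the minimising sequence from the claimed coercivity of $\cost$ together with the recalled bounds \eqref{L2estimates}, and passes to the limit in $uu_x$ only \emph{weakly}, via the Volkwein-type splitting $(u^n-u^*)u^n_x+(u^n-u^*)_xu^*$ and the compact embedding $H^2\hookrightarrow L^2$; it never needs uniqueness of the state or a control-to-state map. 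You instead construct the solution operator $f\mapsto u(f)$ with finite-time energy estimates in $L^\infty(0,T;\dotH{2})\cap L^2(0,T;\dotH{4})$ uniform over $F_{ad}$, extract \emph{strong} convergence via Aubin--Lions (including $u_n(T)\to u^*(T)$ in $\dot L^2$, which handles the terminal term cleanly), and identify the limit state by uniqueness. What your route buys is rigour exactly where the paper is softest: the estimates \eqref{L2estimates} are $\limsup_{t\to\infty}$ bounds for the \emph{unforced} equation, so invoking them to bound $\|u^n_x\|$ along a controlled minimising sequence on $[0,T]$ (and the coercivity claim on the $H^1(0,T;\dotH{2})$-norm) is not justified as written; your finite-time forced estimates are what is actually needed, at the price of proving a well-posedness lemma that you only sketch (the sketch is the standard 1D KS bootstrap and is correct). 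The paper's route is lighter — no uniqueness, and weak convergence of the nonlinearity suffices for lower semicontinuity — but rests on those unproved bounds.

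One slip in your closing remark: the electric-field term $\mu\hilb[u_{xxx}]$ is \emph{not} skew-adjoint on $\dot L^2$; its Fourier symbol is real, $\mu|k|k^2$, and it is destabilising (this is precisely its role in \eqref{eq:lambda}), so $\int_0^{2\pi}\hilb[u_{xxx}]\,u\,dx\neq 0$ in general. The extension to $\mu>0$ still goes through, but because $\mu|k|^3\le\epsilon k^4+C_\epsilon$ lets this term be absorbed into the $\nu$-dissipation plus a lower-order term, not because it drops out of the energy identity; only the dispersive term $\delta u_{xxx}$ is skew-adjoint. Since the theorem as stated in \eqref{optmrob} concerns the plain KS state equation, this affects only your aside, not the proof of the statement.
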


\begin{remark}\label{rmk0}
Since the Hilbert transform and third derivative terms are linear functionals of $u$, Theorem~\ref{theorem} can be easily generalised to the case when $\mu,\delta>0$.
\end{remark}
\begin{remark}\label{rmk1}
The presence of the Burgers nonlinearity in the PDE makes the optimisation problem no longer convex. Consequently, we do not expect the solution of the optimal control problem to be unique. 
\end{remark}
The nonlinearity in our problem, defined by $\mathcal{N}(u) = uu_x$, is twice Fr\'{e}chet differentiable with respect to $u$ but 
is neither an increasing functional of $u$, nor is it globally Lipschitz continuous. 
Furthermore, it depends explicitly on the derivative $u_x$. Consequently, the well developed theory of optimal control for systems of reaction-diffusion equations~\cite{Lions1971,Troltzsch2010} does not apply to our problem; see Equation~\eqref{optmrob}. 

\noindent{\it Proof of Theorem~\ref{theorem}.} 
Let $X = H^1(0,T;\dotH{2}(0,2\pi))\times F_{ad}$ and $e(\cdot,\cdot)$ be a functional defined in $X$ by
\begin{equation}\label{e(u,f)} e(u,f) = \left[\begin{array}{c} u_t+\nu u_{xxxx} + u_{xx} + uu_x - f \\
u(\cdot,0)-u_0(x) \end{array}\right].\end{equation}
Our optimisation problem is that of minimising the cost functional $\cost$ subject to $e(u,f) = 0$, periodic boundary conditions and $(u,f)\in X$;; see Equation~\eqref{optmrob}.

Let $(u,f) \in X$ satisfy $e(u,f) = 0$. Since $\cost$ is a function of the sum of the norms of $u$ and $f$, it is clear that $\cost$ is nonnegative and 
\begin{equation}\label{infinity} \cost(u,f) \rightarrow \infty \quad \textrm{ for } \quad \|(u,f)\|_X \rightarrow \infty. \end{equation}
Therefore, there exists a constant $c\geq 0$ such that $  c = \inf_{e(u,f) = 0} \cost(u,f) = \lim_{n\rightarrow\infty} \cost(u^n,f^n)$, 
where $(u^n,f^n)$ is a minimising sequence in $X$, which exists due to the reflexivity of $L^2$. From Equation \eqref{infinity} 
we can conclude that $\left\{(u^n,f^n)\right\}_{n \in \NN}$ is bounded, and therefore there exists $(u^*,f^*)\in X$ such that $ (u^n,f^n)  \rightharpoonup (u^*,f^*) \quad \textrm{ for } \quad n\rightarrow\infty$.
This means that all the linear functionals of $u^n$ and $f^n$, and in particular their derivatives, 
also converge weakly to the same functionals of $u^*$ and $f^*$ in the appropriate space. 
Hence, we only need to prove the convergence of the nonlinearity.

Following an argument similar to that in \cite{Volkwein2000} for the Burgers equation, we notice that since for every $t\in[0,T]$
we have $u^*(\cdot,t) \in \dotH{2}(0,2\pi)$, then $u^* (\cdot,t)\in C([0,2\pi])$ and therefore if $\varphi \in X$, $(u^*\varphi)(\cdot,t) \in L^2([0,2\pi])$. Hence, $u^*\varphi \in H^1(0,T;L^2(0,2\pi))$ and 
\begin{equation}\label{firstnonl} \int_0^T\int_0^{2\pi} (u^n-u^*)_x u^*\varphi \ dx \ dt  \longrightarrow_{n\rightarrow\infty} 0, \quad \forall\varphi\in H^1(0,T;\dotH{2}(0,2\pi)).\end{equation}

Finally, from the estimates \eqref{L2estimates} we know that $\|u^n_x\|$ is bounded, and since $H^2(\Omega)$ is compactly embedded in $L^2(\Omega)$, we deduce that
\begin{multline}\label{estimate}
\int_0^T\int_0^{2\pi} (u^n-u^*)u_x^n\varphi\ dx \ dt \leq \|u^n-u^*\|\|u_x^n\|\|\varphi\|_{L^\infty(0,2\pi)} \longrightarrow_{n\rightarrow\infty} 0, \\
  \forall \varphi \in \dotH{2}(0,2\pi).
\end{multline}

Hence, by adding and subtracting appropriate terms we have
\begin{multline}\label{nonlinearity}
\int_0^T\int_0^{2\pi} (u^n u_x^n - u^*u_x^*)\varphi \ dx \ dt = \int_0^{2\pi} (u^n-u^*)u_x^n\varphi + (u^n-u^*)_xu^*\varphi \ dx \longrightarrow_{n\rightarrow\infty} 0,\\
 \forall \varphi \in \dotH{2}(0,2\pi),
\end{multline}
and therefore the nonlinearity $u^nu^n_x$ is weakly convergent to $u^*u^*_x$ in $X$.
Now, noticing that $u^*$ and $u^*_x$ are continuous in $[0,2\pi]\times[0,T]$, 
we observe that $u^*$ satisfies the periodic boundary conditions and the initial condition. If we now consider $\varphi \in X$ satisfying 
$\varphi(x,T)= 0$, and use the weak convergence of the derivatives of $u$ and equation \eqref{nonlinearity}, we conclude that $(u^*,f^*)$ is a weak solution of the state equation.
The optimality of the pair $(u^*,f^*)$ follows from the weak lower semi-continuity of $\cost$ (cf. proof of Theorem 4.15 in \cite{Troltzsch2010}).
\qed

%
%
\subsection{ Algorithm and Numerical Experiments}
\label{sec:num-optcontr}

We note that the dependence of the cost functional on the positions $x_i$, $i = 1,\dots,m$ is in the controls, since the matrix $K$ necessary to define them depends on the positions chosen. However, when defining the Lagrangian we will assume that only 
the functions $b_i(x)$ depend on $x_i$, and treat the controls $f_i(t)$ as if they were independent of the positions $x_i$. 
Under this assumption we are able to obtain very satisfactory results, as evidenced by the results presented in the tables below.
We begin by introducing the Lagrangian
\begin{equation}\label{lagrangean}
\begin{array}{ll}
\Lag\left(u,p,[x_1,x_2,\dots,x_m]^T\right) =& \displaystyle{\frac{1}{2}\int_0^T \| u(\cdot,t)-\bar{u}\|^2 \ dt + \frac{1}{2} \| u(\cdot, T) - \bar{u}\|^2}\\
&\displaystyle{ + \frac{\gamma}{2} \int_0^T \sum_{i = 1}^m f_i(t)^2 \ dt }\\
&\displaystyle{- \int_0^T\int_0^{2\pi} \left(u_t + \nu u_{xxxx} + u_{xx} + uu_x\right)p(x,t) \ dx \ dt}\\
&\displaystyle{+ \int_0^T\int_0^{2\pi}\sum_{i=1}^m \delta(x-x_i)_if_i(t)p(x,t) \ dx \ dt.}
\end{array}
\end{equation}
Integrating by parts in space and time and computing the Fr\'{e}chet derivative with respect to $u$ (with test functions $h(x,t)$ satisfying $h(x,0) = 0$), we obtain the adjoint equation
\begin{equation}\label{adjoint}
\left\{\begin{array}{rcl}
-p_t + \nu p_{xxxx} + p_{xx} - up_x &=&  \sum_{i = 1}^m \delta(x-x_i)K_{i\cdot}z^p + u - \bar{u},\\
p(x,T) &=& u(x,T),\\
\frac{\partial^j p}{\partial x^j} (x+2\pi) & = & \frac{\partial^j p}{\partial x^j} (x),
\end{array}\right.
\end{equation}
where $x \in [0,2\pi]$ and $t \in [0,T]$. This PDE is backwards in time but is well-posed since it is a final value problem. 
To solve it, we obtain the discretised ODE system $-\dot{z}^p = \mathcal{A}z^p + G^{adj}(z^p, z^u) +z^u-z^{\bar{u}}$, where the elements of $G^{adj}(z^p, z^u)$ are given by
\begin{multline*}
g^s_{n,adj} = \frac{1}{2\sqrt{\pi}} \sum_{j + k = n} k(u^s_j p^s_k - u^c_j p^c_k) + \frac{1}{2\sqrt{\pi}}\sum_{j-k = n}\left(k(u^s_j p^s_k +  u^c_j p^c_k) - j(u^s_k p^s_j + u^c_k p^c_j)\right), \\
g^c_{n,adj} = \frac{1}{2\sqrt{\pi}} \sum_{j + k = n} k(u^c_j p^s_k + u^s_j p^c_k) + \frac{1}{2\sqrt{\pi}}\sum_{j-k = n}\left(k(u^c_j p^s_k -  u^s_j p^c_k) + j(u^c_k p^s_j - u^s_k p_j^c)\right),\end{multline*}
and we have used the Fourier series representation 
$p(x,t) = \frac{p_0^c}{\sqrt{2\pi}} + \sum_{n=1}^{\infty} p^s_n(t) \frac{\sin(nx)}{\sqrt{\pi}} + \sum_{n=1}^{\infty} p^c_n(t) \frac{\cos(nx)}{\sqrt{\pi}}$.

Differentiating with respect to the positions of the control actuators, we also obtain a descent direction using the variational inequality, or first variation
\begin{equation}\label{variationalinb}
\int_0^T \left[f_1(t)p_x(\bar{x}_1,t) \cdots f_m(t)p_x(\bar{x}_m,t)\right]^T \cdot (x - \bar{x})  \ dt \geq 0, \quad \forall x = [x_1 \cdots x_m]^T,
\end{equation}
where $\bar{x} = [\bar{x}_1,\dots,\bar{x}_m]$ are the optimal positions.
To proceed with the optimisation, we will use a gradient descent method, see~\cite[Sec. 5.9]{Troltzsch2010}, and
consider $F_{ad} = (0,2\pi)^m$. The algorithm is as follows.

\begin{figure}\label{algorithm}
\begin{center}
 \fbox{
\begin{minipage}[htbf]{13cm}
\begin{center}{\bf Algorithm for optimal control for the KS equation}\end{center}
\begin{description}
\item 	Given $\nu, \, \mu, \,  \gamma, \, T, \, u_0(x), \, A, \, x^0, \,  B^0, \, \bar{u}$, compute the matrix $K^0$.
\end{description}
{\bf while} $\cost(\textrm{current iteration}) < \cost(\textrm{previous iteration})$ {\bf do} 

\begin{enumerate}
\item Solve the state equation to obtain $u^{k-1}$ and compute $\cost(u^{k-1},\bar{u},F(x^{k-1}))$;
		\item Solve the adjoint equation to obtain $p^{k-1}$;
		\item Define $P_x = \left[p^{k-1}_x(x^{k-1}_1,t) \cdots p^{k-1}_x(x^{k-1}_m,t)\right]$, \\ $P_{k-1} = \displaystyle{\int_0^T K^{k-1}(z^{u^{k-1}}-z^{\bar{u}})P_x \ dt}$ and $h_k = - P_{k-1}$;
		\item Find $s = \min_{s>0} \left\{C(u(x^{k-1} + sh_k),\bar{u},F(x^{k-1} + sh_k))\right\}$;
		\item Project $x^{k-1} + sh_k$ into $(0,2\pi)^m$, obtaining $x^k$;
		\item Compute the matrix $B^k$;
		\item Compute the matrix $K^k$ with \emph{Matlab}'s command \emph{place}.
\end{enumerate}
{\bf end}
\end{minipage}}
\end{center}
\caption{Algorithm for optimal control of the KS equation.}
\end{figure}
 
Note that as mentioned earlier we consider the following three different cost functionals:
\begin{equation}
\label{L2cost} \cost_1\left(u,\bar{u}, f\right) = \frac{1}{2}\int_0^T \| u(\cdot,t)-\bar{u}\|^2 \ dt + \frac{1}{2} \| u(\cdot, T) - \bar{u}\|^2+ \frac{\gamma}{2}  \sum_{i=1}^m \|f_i(t)\|_{L^2(0,T)}
\end{equation}
\begin{equation}\label{H1cost}
\begin{array}{rl}
\cost_2\left(u,\bar{u}, f\right) =& \frac{1}{2}\int_0^T \left(\| u(\cdot,t)-\bar{u}\|^2 + \| u_x(\cdot,t)-\bar{u}_x\|^2\right) \ dt\\
&\displaystyle{ + \frac{1}{2}\left( \| u(\cdot, T) - \bar{u}\|^2 + \| u_x(\cdot, T) - \bar{u}_x\|^2\right) }\\
&\displaystyle{ + \frac{\gamma}{2} \sum_{i=1}^m \|f_i(t)\|_{L^2(0,T)} }
\end{array}
\end{equation}
\begin{equation}\label{H2cost}
\begin{array}{rl}
\cost_3\left(u,\bar{u}, f\right) = &\frac{1}{2}\int_0^T \left(\| u(\cdot,t)-\bar{u}\|^2 +  \| u_x(\cdot,t)-\bar{u}_x\|^2 + \| u_{xx}(\cdot,t)-\bar{u}_{xx}\|^2\right) \ dt \\
& \displaystyle{+ \frac{1}{2} \left(\| u(\cdot, T) - \bar{u}\|^2 + \| u_x(\cdot, T) - \bar{u}_x\|^2 + \| u_{xx}(\cdot, T) - \bar{u}_{xx}\|^2\right) }\\
& \displaystyle{+\frac{\gamma}{2} \sum_{i=1}^m \|f_i(t)\|_{L^2(0,T)}}.
\end{array}	\end{equation}

%
%
\subsection{Numerical Experiments}
\label{sec:num-optcontr}

Computations were carried out using the algorithmpresented in Figure~\ref{algorithm} for various values of $\nu$ and $\mu$.
The number of controls used was equal to the number of unstable eigenvalues, and $m$ equidistant points
were used as an initial guess for the position of the controls. 
In all the computations the
initial condition is $u_0(x) = \frac{\sin(x)}{\sqrt{\pi}} + \frac{\cos(x)}{\sqrt{\pi}}$, 
and the final time is $T = 10$. 
Numerical results are presented in Tables \ref{tab1.1}-\ref{tab1.3} for the stabilisation of the zero solution
of the KS equation, and in Tables \ref{tab2.1}-\ref{tab2.3} for the stabilisation on non-trivial unstable steady
states as computed in the bifurcation diagram of Figure~\ref{fig:bifdiag3}.
Each entry in Tables \ref{tab1.1}-\ref{tab1.3} contains the value of $\nu$, the value of the cost functional ($\cost_1$, $\cost_2$
and $\cost_3$ for tables \ref{tab1.1}, \ref{tab1.2} and \ref{tab1.3}, respectively), the cost of the controls
$\sum_{i=1}^m \|f_i(t)\|_{L^2(0,T)}$, the number of iterations required to obtain an optimal state, and in the last
column the spatial distribution of the controls over the domain $[0,2\pi]$ - a heavy dot is placed where a control acts.
Tables \ref{tab2.1}-\ref{tab2.3} are presented in an analogous manner, with the difference that the first column provides information
on the unstable solution that is being controlled, and in particular the branch on Figure~\ref{fig:bifdiag3}
where the solution was taken from is stated along with the value of $\nu$. As the results indicate, several
distinct unstable solutions at a given value of $\nu$ are controlled (e.g. for $\nu=0.1$, three solutions are stabilised
coming from branches 1, 3 and 4 respectively).

\begin{table}[h!]
\begin{tabular}{| c | c | c | c | c | }
\hline
 $\nu$ & Cost $\cost_1$ & Cost of Controls & Iterations & Optimal Positions  \\
\hline

$0.9$  & $8.9647$	& $0.5592$ & $3$	& \includegraphics[width=0.38\linewidth]{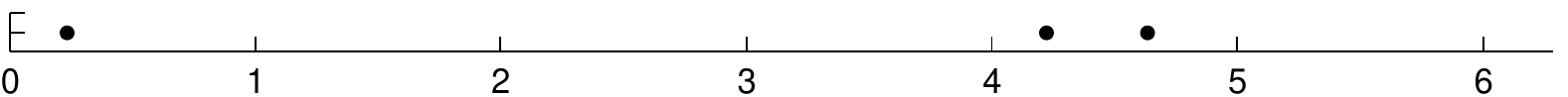} \\

$0.8$ & $6.5012$	& $1.1274$ & $6$	&  \includegraphics[width=0.38\linewidth]{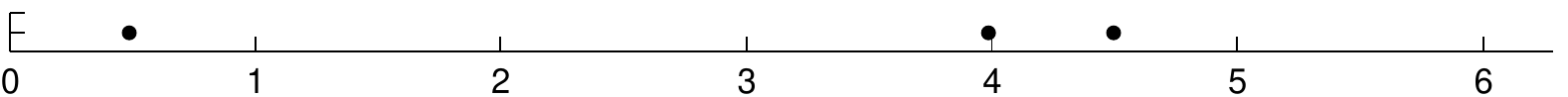}  \\

$0.7$  & $5.5760$ & $1.7204$ & $4$	& \includegraphics[width=0.38\linewidth]{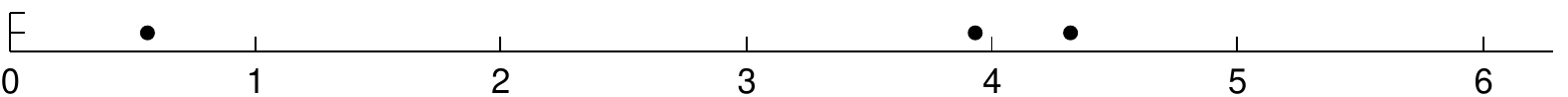} \\

$0.6$  & $5.2803$	& $2.3018$ & $3$	&  \includegraphics[width=0.38\linewidth]{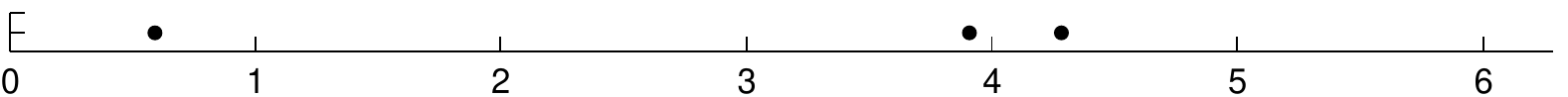}  \\

$0.5$ & $5.2230$	& $2.8204$ & $5$	&  \includegraphics[width=0.38\linewidth ]{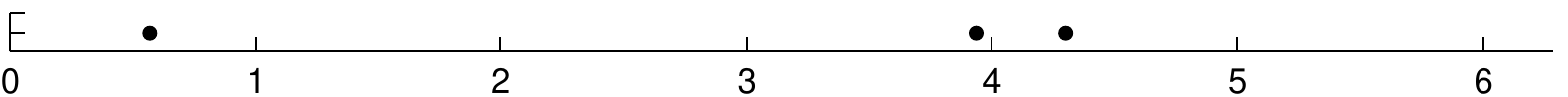} \\

$0.4$ & $5.9339$	& $3.8404$ & $5$	&  \includegraphics[width=0.38\linewidth]{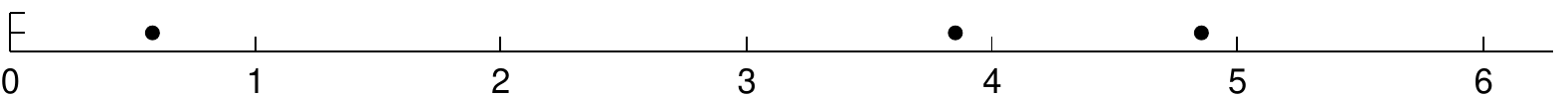}  \\

$0.3$ & $6.2152$	& $3.9813$ & $2$	&  \includegraphics[width=0.38\linewidth]{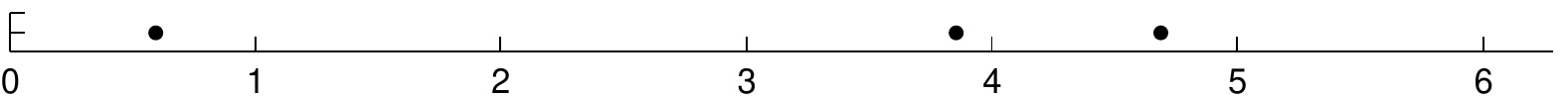} \\

$0.2$ & $6.3127$	& $4.5261$ & $2$	&  \includegraphics[width=0.38\linewidth]{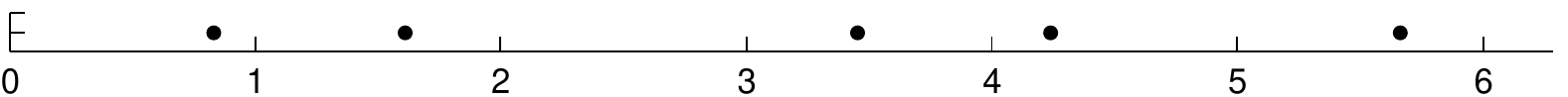} \\

$0.1$ & $7.1652$	& $5.5759$ & $2$	& \includegraphics[width=0.38\linewidth]{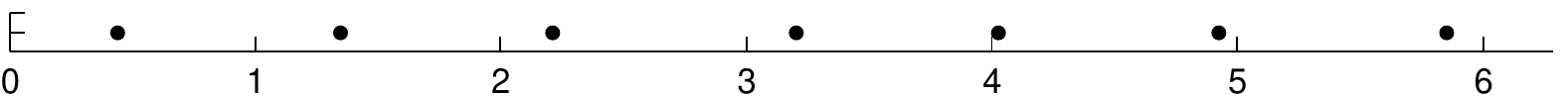} \\

\hline
\end{tabular}
\caption{Optimal positions and value of the cost functional considered in the $L^2$-norm for different values of $\nu$ when stabilising the zero solution to the KS equation.}\label{tab1.1}
\end{table}

\begin{table}[h!]
\begin{tabular}{| c | c | c | c | c | }
\hline
 $\nu$ & Cost $\cost_2$ & Cost of Controls & Iterations & Optimal Positions  \\
\hline

$0.9$ & $17.6354$ & $0.9698$ &  3 &\includegraphics[width=0.38\linewidth]{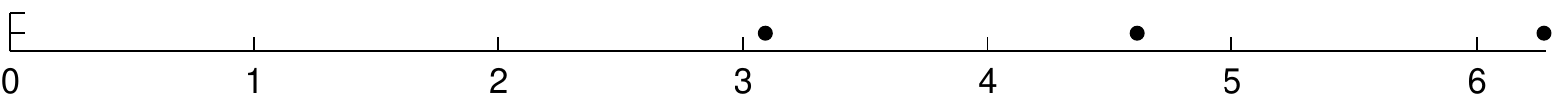}  \\

$0.8$ & $12.2553$ & $1.5486$ &  6 & \includegraphics[width=0.38\linewidth]{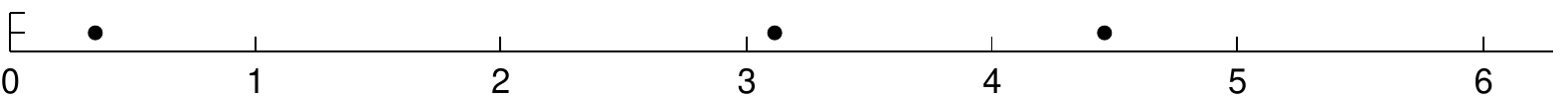}  \\

$0.7$ & $10.8270$ & $2.0016$ &  2 & \includegraphics[width=0.38\linewidth]{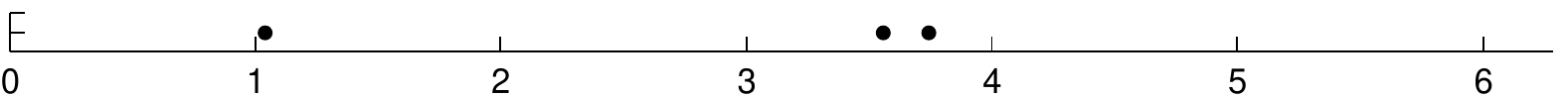}  \\

$0.6$ & $10.2340$ & $4.0181$ &  6 & \includegraphics[width=0.38\linewidth]{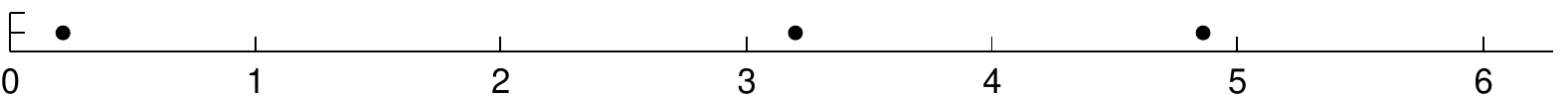}  \\

$0.5$ & $10.1517$ & $4.5407$ &  4& \includegraphics[width=0.38\linewidth]{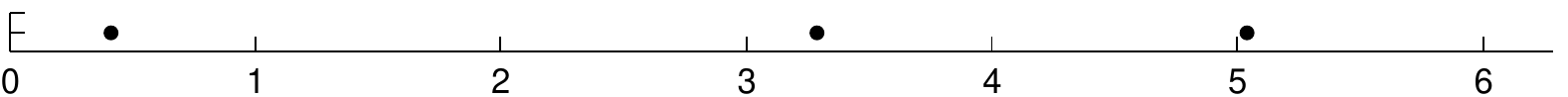}  \\

$0.4$& $9.3345$ & $4.2298$ &  2 & \includegraphics[width=0.38\linewidth]{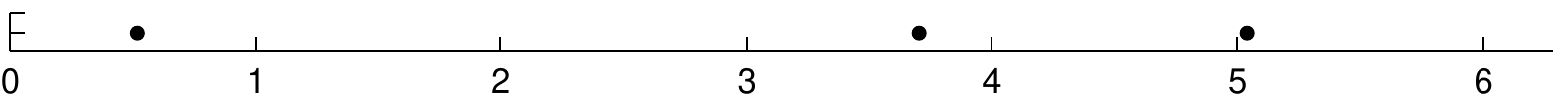}  \\

$0.3$ & $10.9671$ & $4.8110$ &  2 & \includegraphics[width=0.38\linewidth]{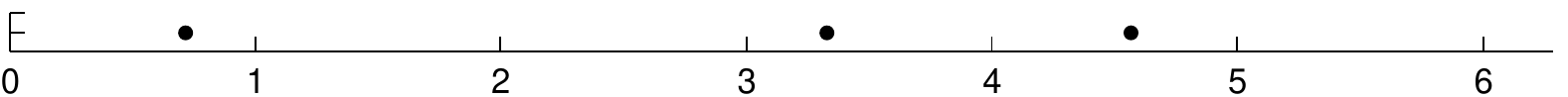}  \\

$0.2$ & $10.7154$ & $5.5308$ &  3 & \includegraphics[width=0.38\linewidth]{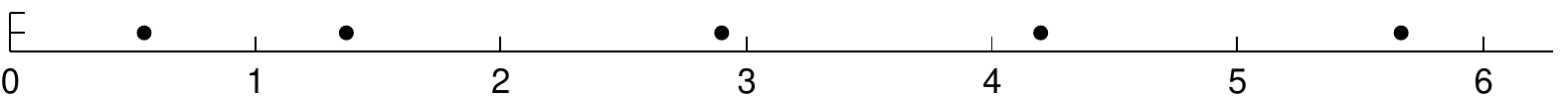}  \\

$0.1$ & $9.7088$ & $5.6463$ &  1 & \includegraphics[width=0.38\linewidth]{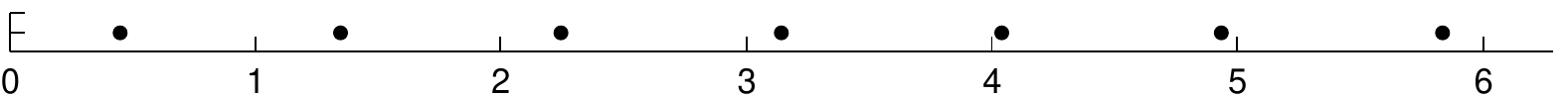}  \\

\hline
\end{tabular}
\caption{Optimal positions and value of the cost functional considered in the $H^1$-norm for different values of $\nu$ when stabilising the zero solution to the KS equation.}\label{tab1.2}
\end{table}

\begin{table}[h!]
\begin{tabular}{| c | c | c | c | c | }
\hline
 $\nu$ & Cost $\cost_3$  & Cost of Controls & Iterations & Optimal Positions  \\
\hline

$0.9$  & $27.2098$ & $1.1313$ & 3 &  \includegraphics[width=0.38\linewidth]{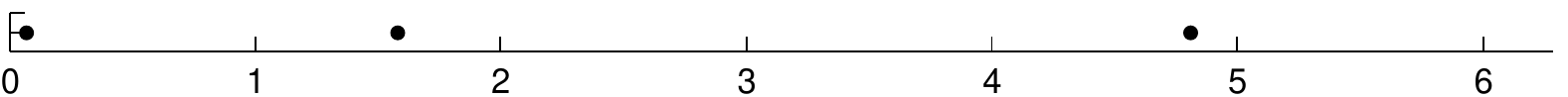}  \\

$0.8$ & $19.3431$ & $2.3490$ & 5 &  \includegraphics[width=0.38\linewidth]{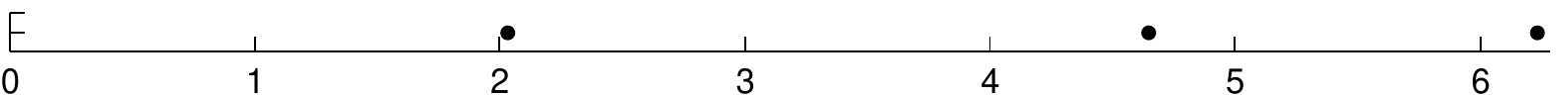}   \\

$0.7$ & $15.8522$ & $2.5815$ & 4 &  \includegraphics[width=0.38\linewidth]{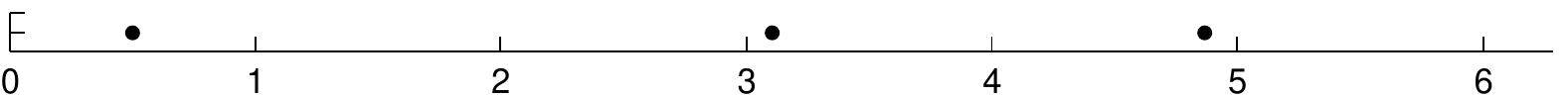}   \\

$0.6$ &  $14.0865$ & $3.3384$ & 5 &  \includegraphics[width=0.38\linewidth]{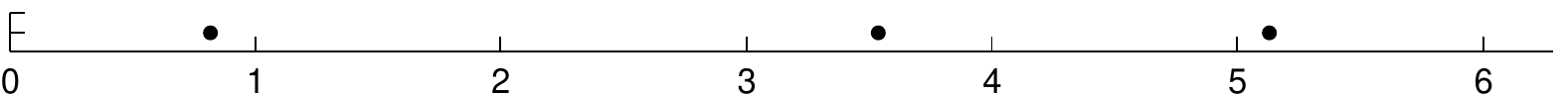}   \\

$0.5$ &  $17.0462$ & $6.4166$ & 1 &  \includegraphics[width=0.38\linewidth]{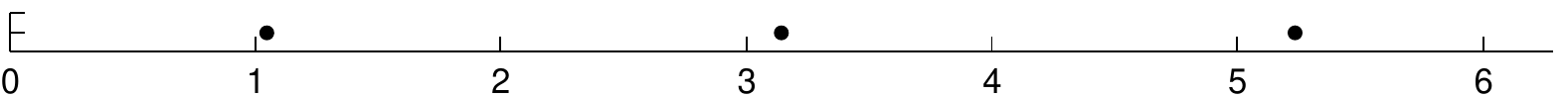}   \\

$0.4$ & $20.7720$ & $8.3217$ & 1 &  \includegraphics[width=0.38\linewidth]{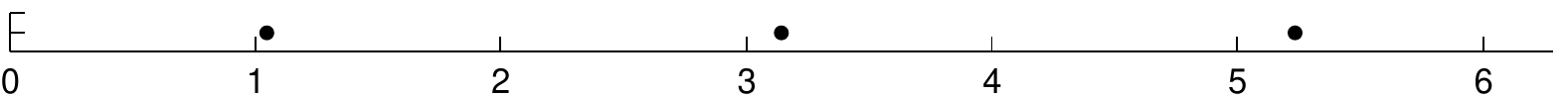}  \\

$0.3$ & $14.4393$ & $5.3865$ & 3 &  \includegraphics[width=0.38\linewidth]{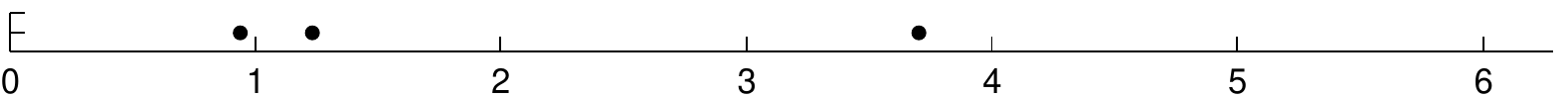}   \\

$0.2$ & $21.5856$ & $6.1456$ & 1 &  \includegraphics[width=0.38\linewidth]{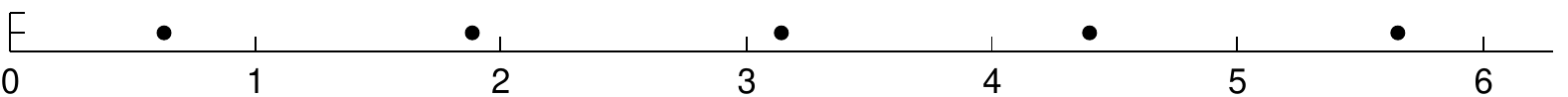}   \\

$0.1$ & $21.1636$ & $5.6463$ & 1 &  \includegraphics[width=0.38\linewidth]{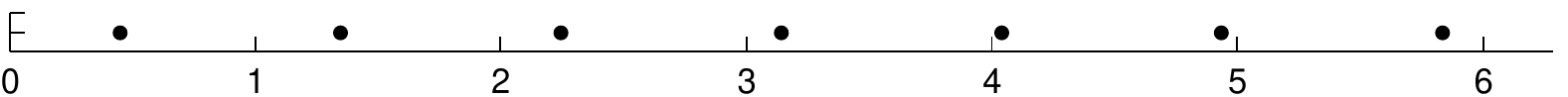}  \\
\hline
\end{tabular}
\caption{Optimal positions and value of the cost functional considered in the $H^2$-norm  for different values of $\nu$ when stabilising the zero solution to the KS equation.}\label{tab1.3}
\end{table}

As expected we observe that the value of the cost functionals $\cost_1,\,\cost_2,\,\cost_3$ given by ~\eqref{L2cost}-\eqref{H2cost}, increases as $\nu$ decreases. Furthermore, the value of the cost functional also increases as we increase the desired regularity of the solution from $L^2$ to $H^1$ to $H^2-$norms. Comparing the results and in particular
the positions of the optimal controls for the three different cost functionals in Tables \ref{tab1.1}-\ref{tab1.3}, we can
conclude that for the stabilisation of the zero steady states, 
the optimal control problem is more robust (in the sense that the optimal positions of the controls 
do not change much as $\nu$ is reduced) when the $L^2$ cost functional $\cost_1$  is used.

\begin{table}[h!]
\begin{tabular}{| c | c | c | c | c | }
\hline
 $\nu$ & Cost $\cost_1$ & Cost of Controls& Iterations & Optimal Positions  \\
\hline

$0.3$& $14.3164$	& $9.7419$ & $5$	& \includegraphics[width=0.38\linewidth]{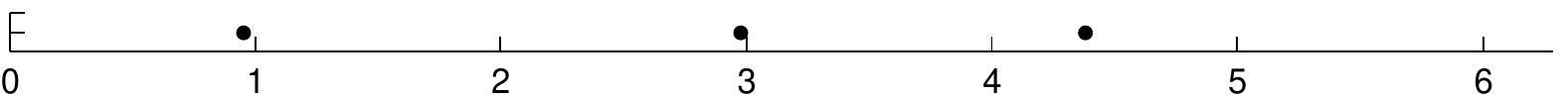} \\

$0.2$,br.1 & $30.0588$ & $21.8691$ & $1$	&  \includegraphics[width=0.38\linewidth]{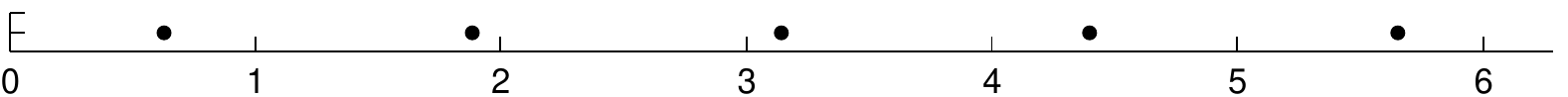} \\

$0.2$,br.3 & $24.0520$	& $16.2832$ & $2$	& \includegraphics[width=0.38\linewidth]{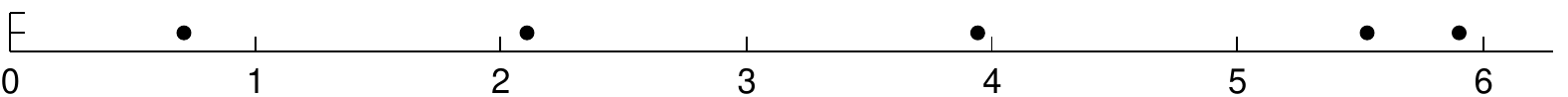} \\

$0.1$,br.1 & $28.5591$	& $32.9859$ & $2$	&  \includegraphics[width=0.38\linewidth]{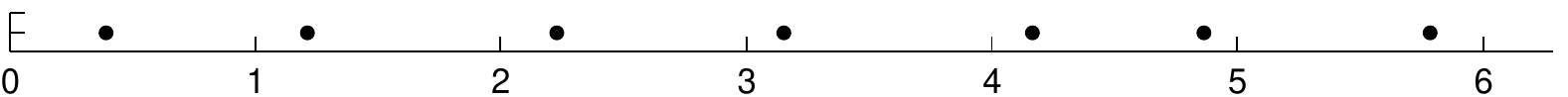}\\

$0.1$,br.3 & $37.8902$	& $32.4264$ & $1$	&  \includegraphics[width=0.38\linewidth]{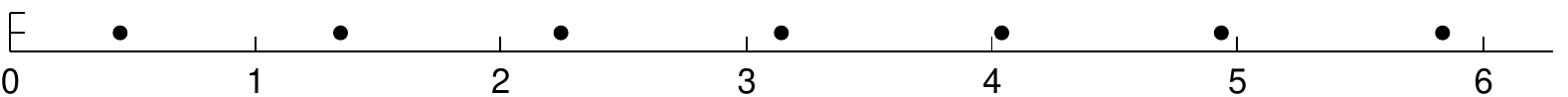}\\

$0.1$,br.4 & $62.3916$	& $51.6820$ & $4$	&  \includegraphics[width=0.38\linewidth]{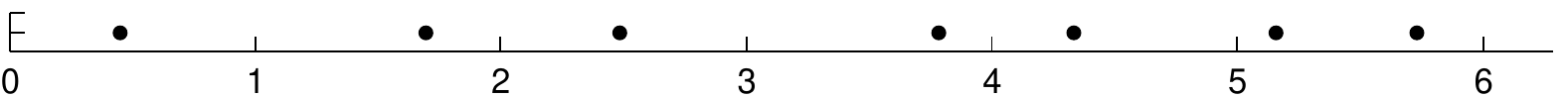} \\

\hline
\end{tabular}
\caption{Optimal positions and value of the cost functional considered in the $L^2$-norm for different values of $\nu$ when stabilising some of the nontrivial steady states from the bifurcation diagram~\ref{fig:bifdiag3}.}\label{tab2.1}
\end{table}

\begin{table}[h!]
\begin{tabular}{| c | c | c | c | c | }
\hline
 $\nu$ & Cost $\cost_2$ & Cost of Controls& Iterations & Optimal Positions  \\
\hline

$0.3$ & $24.6922$ & $9.6582$ &  3 & \includegraphics[width=0.38\linewidth]{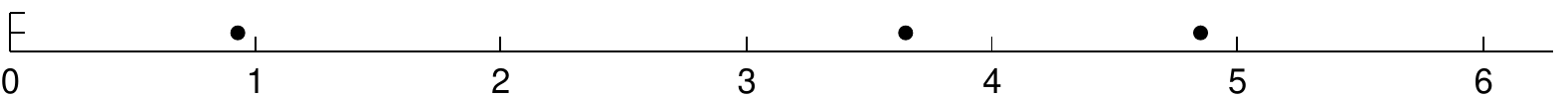}\\

$0.2$,br.1& $53.7672$ & $17.9929$ &  2 &  \includegraphics[width=0.38\linewidth]{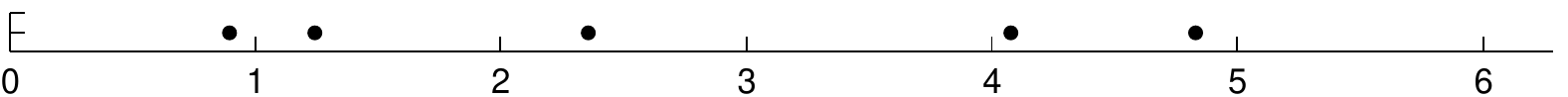} \\

$0.2$,br.3 & $52.2630$ & $15.4089$ &  3 &  \includegraphics[width=0.38\linewidth]{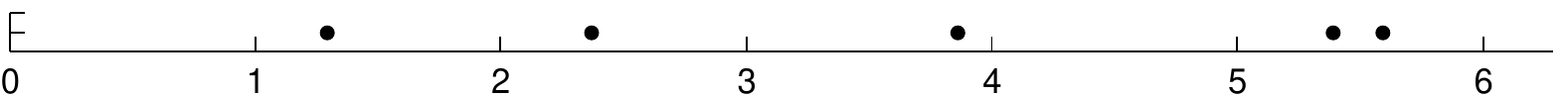} \\

$0.1$,br1 & $87.1636$ & $35.2169$ & 1&  \includegraphics[width=0.38\linewidth]{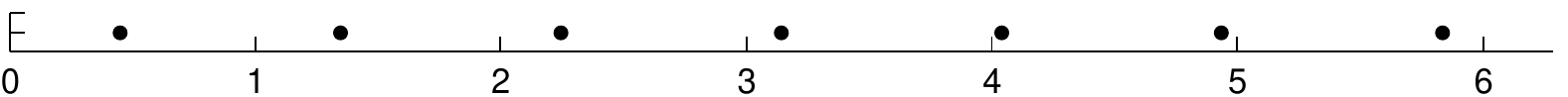}\\

$0.1$,br.3 & $83.9787$ & $33.7581$ &  2 &  \includegraphics[width=0.38\linewidth]{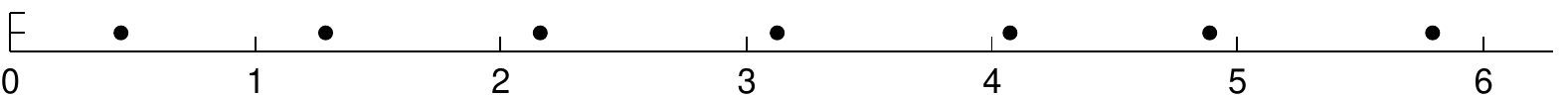}\\

$0.1$,br.4 & $171.6040$ & $62.1381$ &  2 &  \includegraphics[width=0.38\linewidth]{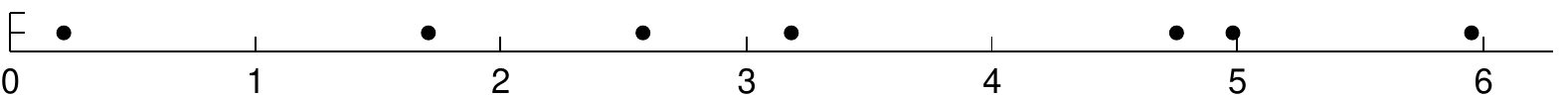} \\

\hline

\end{tabular}
\caption{Optimal positions and value of the cost functional considered in the $H^1$-norm for different values of $\nu$ when stabilising some of the nontrivial steady states from the bifurcation diagram~\ref{fig:bifdiag3}.}\label{tab2.2}
\end{table}

\begin{table}[h!]
\begin{tabular}{| c | c | c | c | c | }
\hline
 $\nu$ & Cost $\cost_3$ & Cost of Controls& Iterations & Optimal Positions  \\
\hline

$0.3$ & $54.5441$ & $13.8436$ & 2 &  \includegraphics[width=0.38\linewidth]{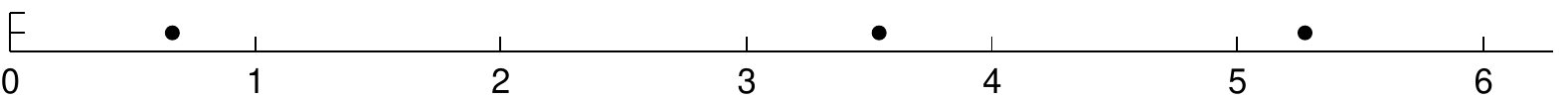} \\

$0.2$,br.1 & $262.7363$ & $21.1679$ & 3 & \includegraphics[width=0.38\linewidth]{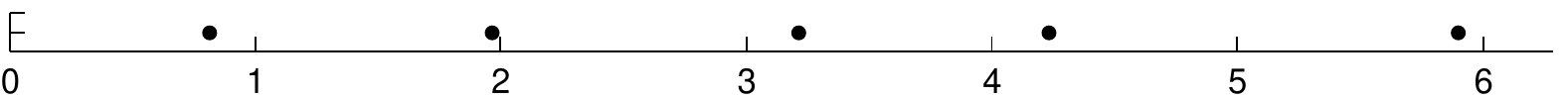}\\

$0.2$,br.3 & $266.0515$ & $32.4603$ & 3 &  \includegraphics[width=0.38\linewidth]{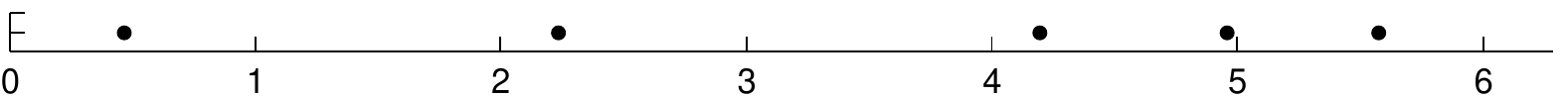} \\

$0.1$,br.1 & $702.4697$ & $35.2169$ & 1 &  \includegraphics[width=0.38\linewidth]{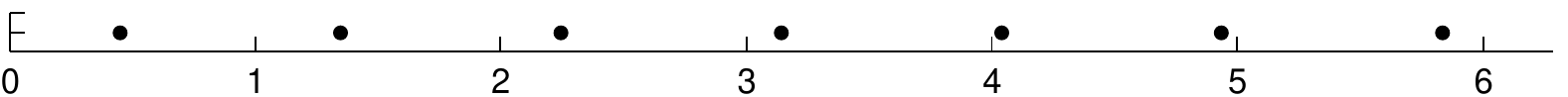} \\

$0.1$,br.3 & $745.6007$ & $32.4264$ & 1 &  \includegraphics[width=0.38\linewidth]{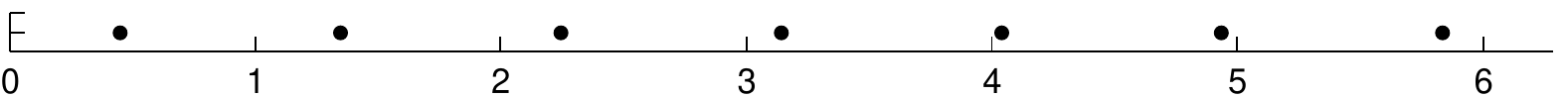} \\
 
$0.1$,br.4 & $1384.6689$ & $63.1272$ & 2 &  \includegraphics[width=0.38\linewidth]{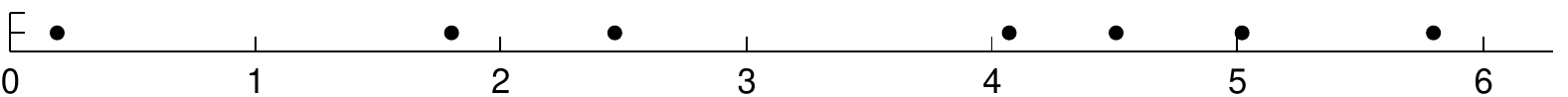} \\

\hline

\end{tabular}
\caption{Optimal positions and value of the cost functional considered inthe $H^2$-norm for different values of $\nu$ when stabilising some of the nontrivial steady states from the bifurcation diagram~\ref{fig:bifdiag3}.}\label{tab2.3}
\end{table}

Turning now to the results of Tables \ref{tab2.1}-\ref{tab2.3} that deal with the stabilisation of unstable nonuniform steady states,
we observe once again that there is an increase in the cost functionals as $\nu$ decreases. We also observe that in this case
(and in contrast to the stabilisation of the zero solution) the higher order norms give optimal controls that are more robust, with respect to changing $\nu$, in comparison to utilising the $L^2$ cost functional.

Similar numerical experiments were performed for the optimal control problem for the KS equation in the presence of an electric field, $\mu>0$. 
The results are quite similar to the ones already presented in this section and we omit providing additional graphs and tables. However, we
summarise the conclusions drawn from the nonzero electric field numerical experiments as follows:
with few exceptions, an increase in the intensity of the electric field parameter $\mu$ increases the cost of the controls. 
In addition, it is found that
the optimal controls for stabilising zero steady states are more robust, with respect to changes in $\mu$, when using the $L^2$ cost functional.
Similarly, when stabilising nontrivial steady states, more robust optimal positions for the controls arise when
the $H^1$ and $H^2$ cost functionals are used. Both of these findings are analogous to those
for the non-electrified control problem.

\begin{figure}[h!]
	\centering 
	\subfloat[Controlled solution]{	\includegraphics[width = 0.75\linewidth]{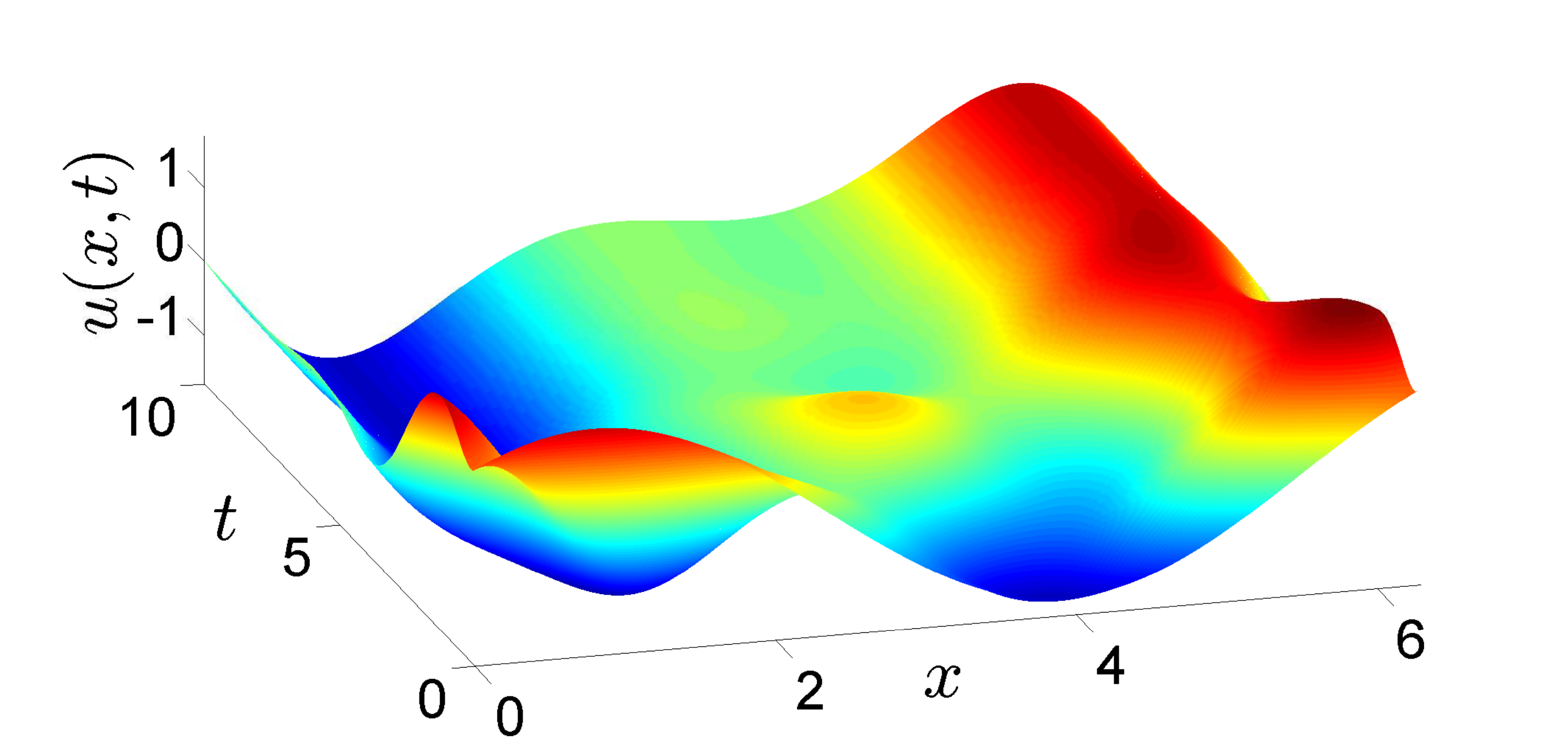}\label{fig:nu03contr}}

	\subfloat[Equidistant controls]{	\includegraphics[width = 0.5\linewidth]{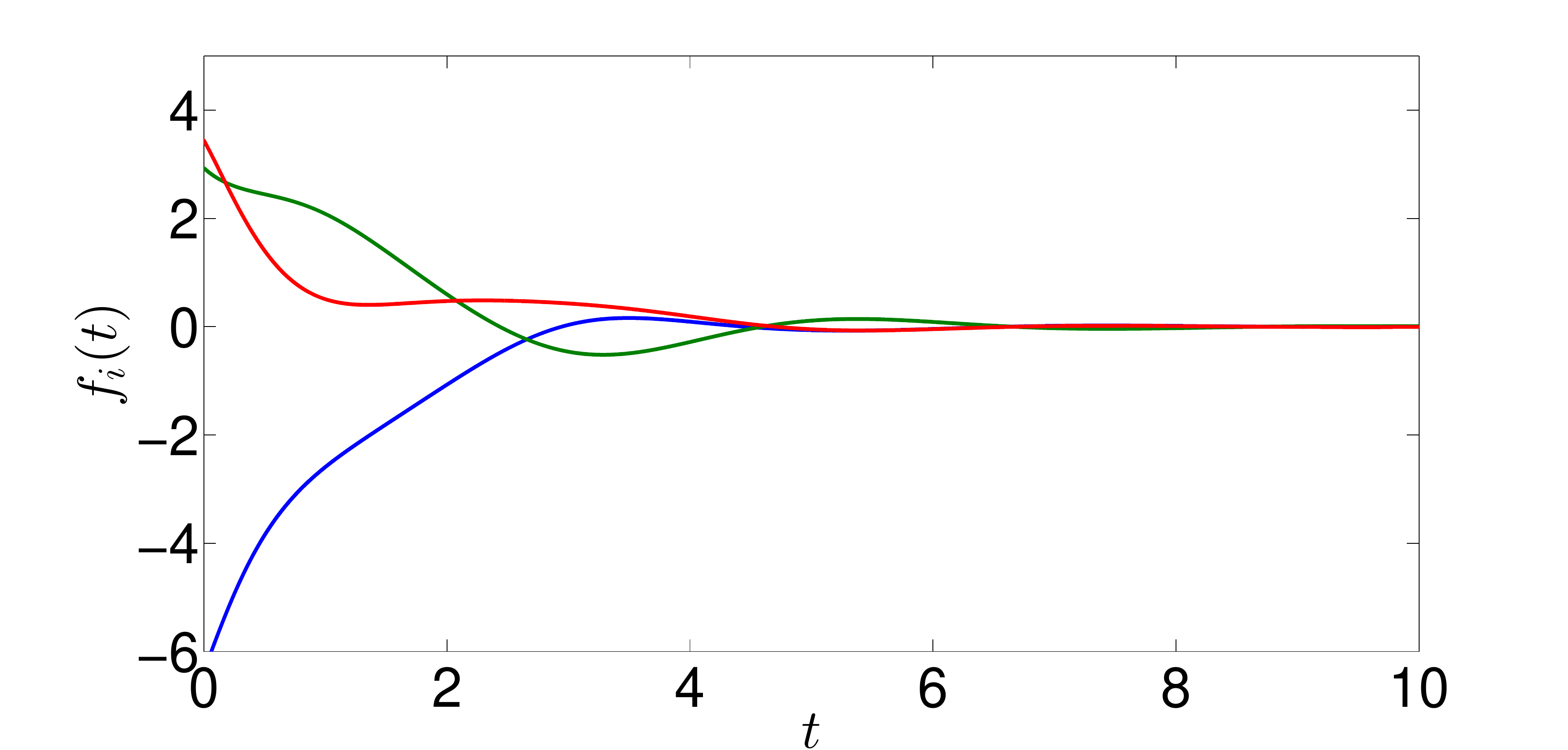}\label{fig:nu03ctrls}}
	\subfloat[Optimal controls]{	\includegraphics[width = 0.5\linewidth]{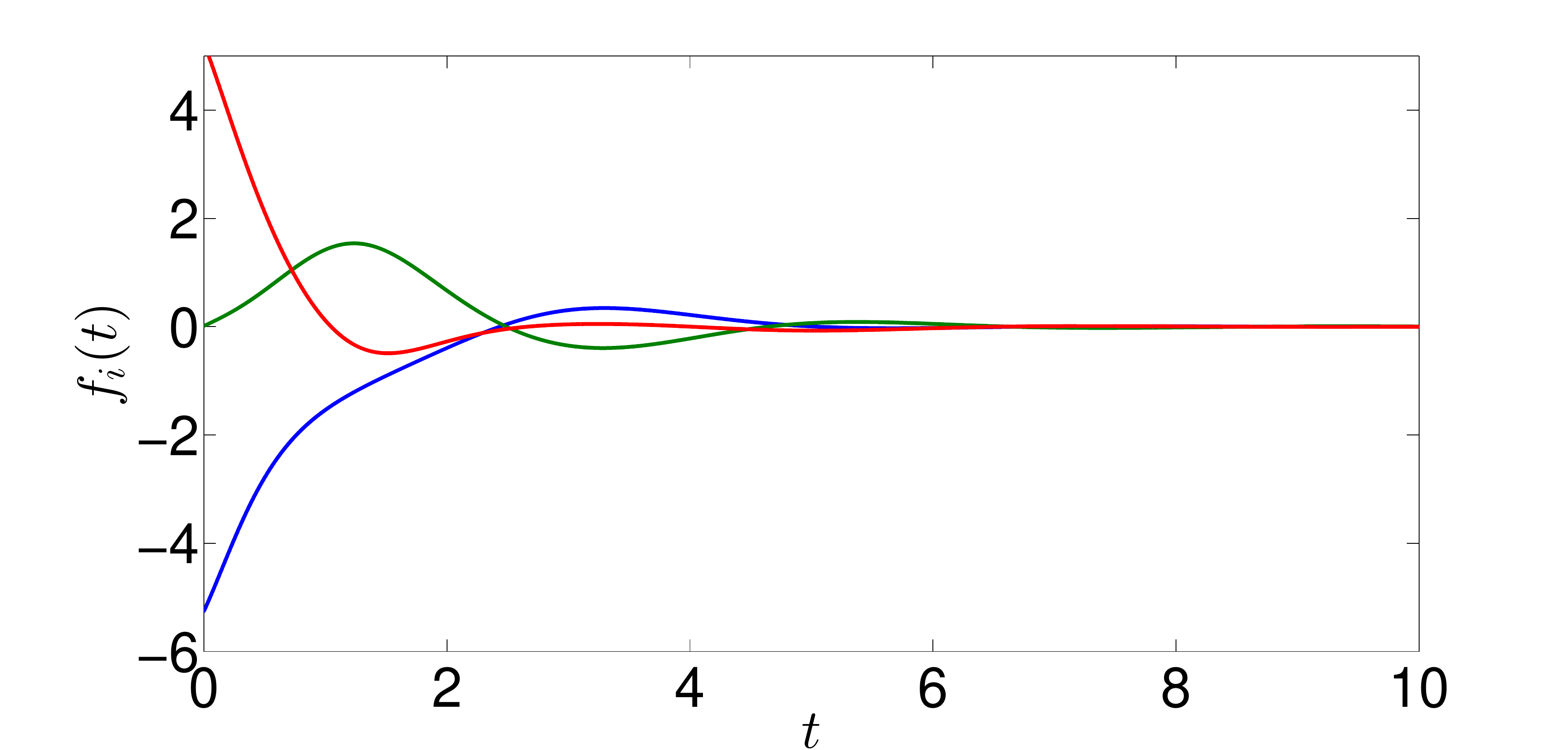}\label{fig:nu03optctrls}}
	 \caption{Controlled steady state of the KS equation for $\nu = 0.3$ (\ref{fig:nu03contr}) and controls applied: (\ref{fig:nu03ctrls}) equidistant and (\ref{fig:nu03optctrls}) optimal.}
	 \label{fig:nu03steadystate}
\end{figure}

\begin{figure}[h!]
	\centering 
	\subfloat[Controlled solution]{	\includegraphics[width = 0.75\linewidth]{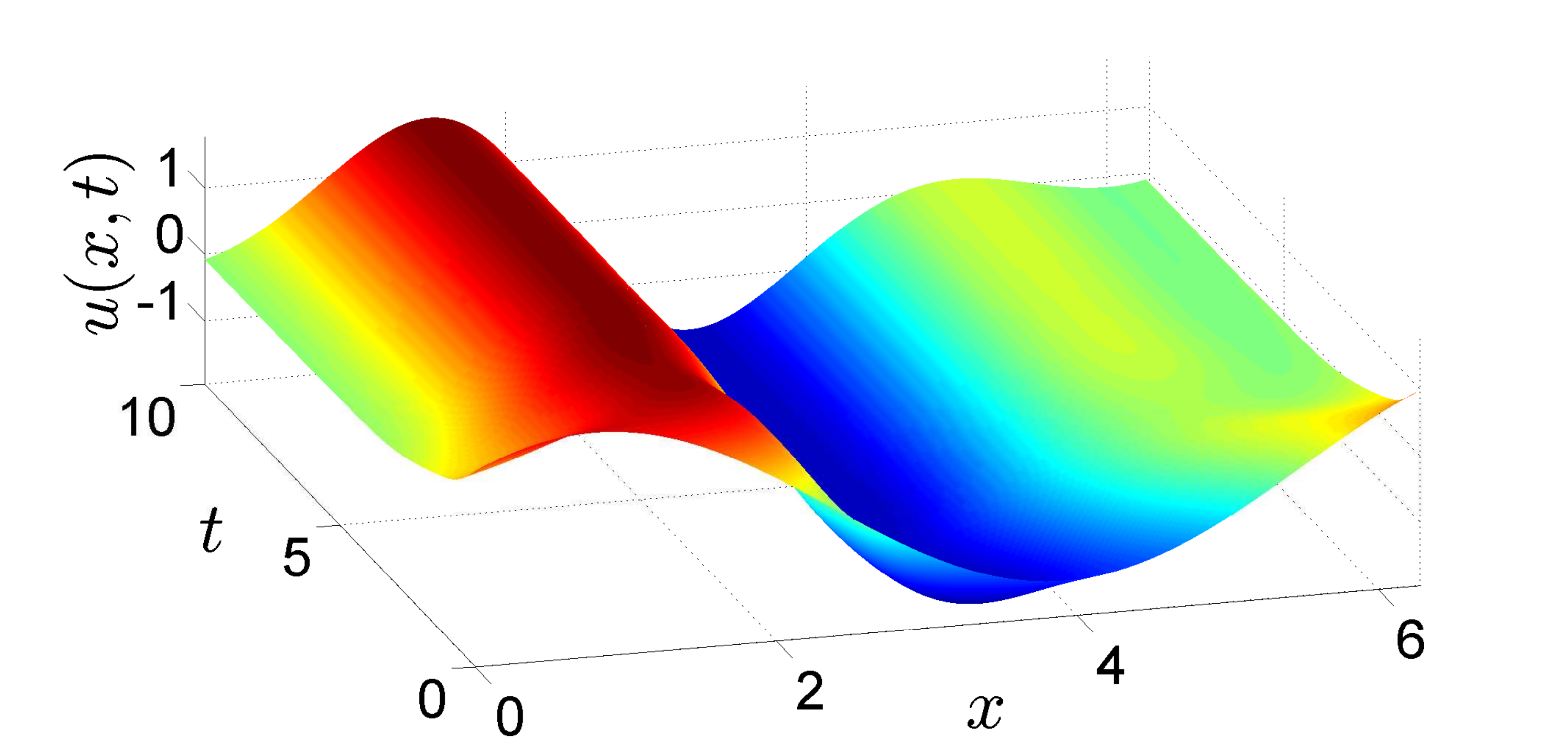}\label{fig:nu05mu04ss}}

	\subfloat[Equidistant controls]{	\includegraphics[width = 0.5\linewidth]{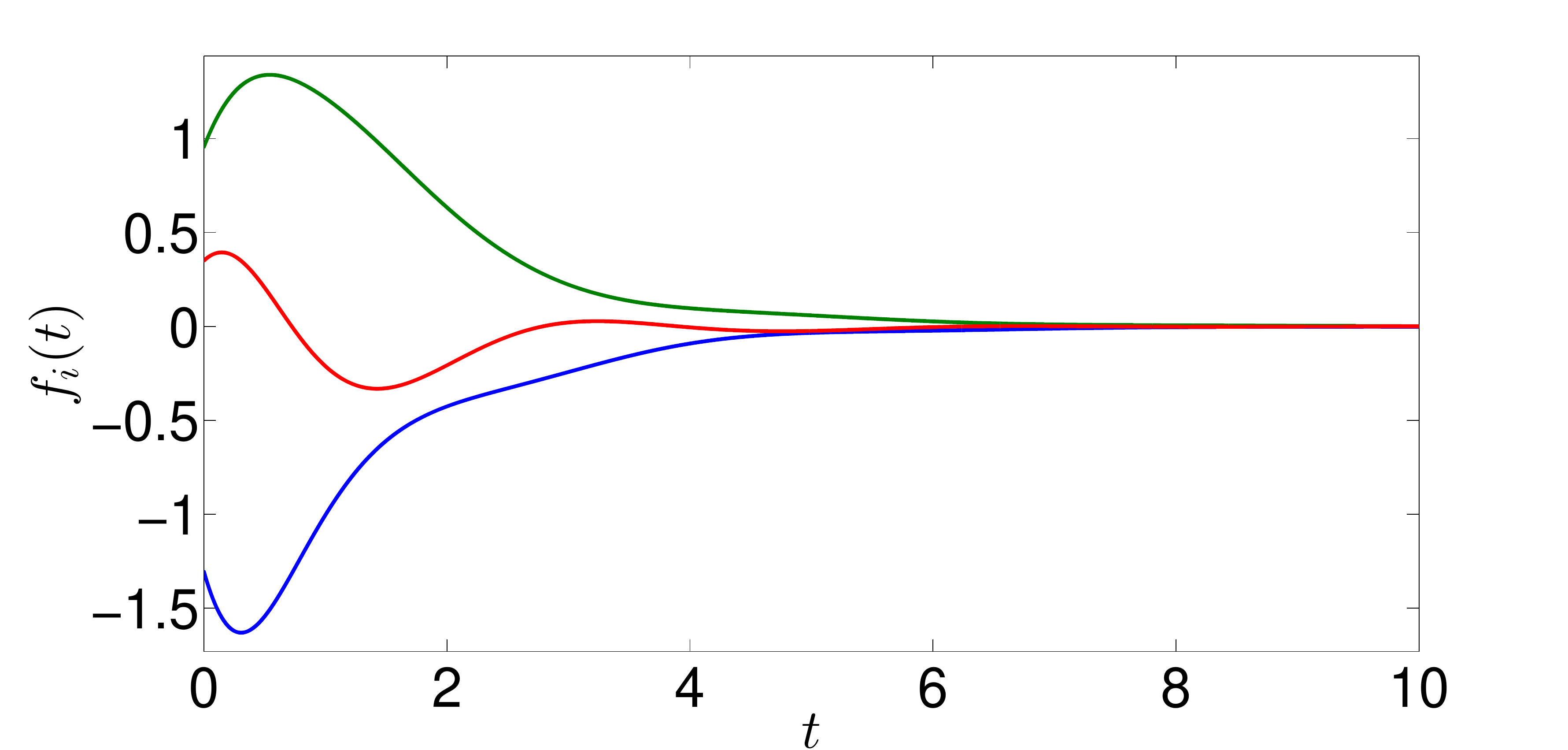}\label{fig:nu05mu04ctrls}}
	\subfloat[Optimal controls]{	\includegraphics[width = 0.5\linewidth]{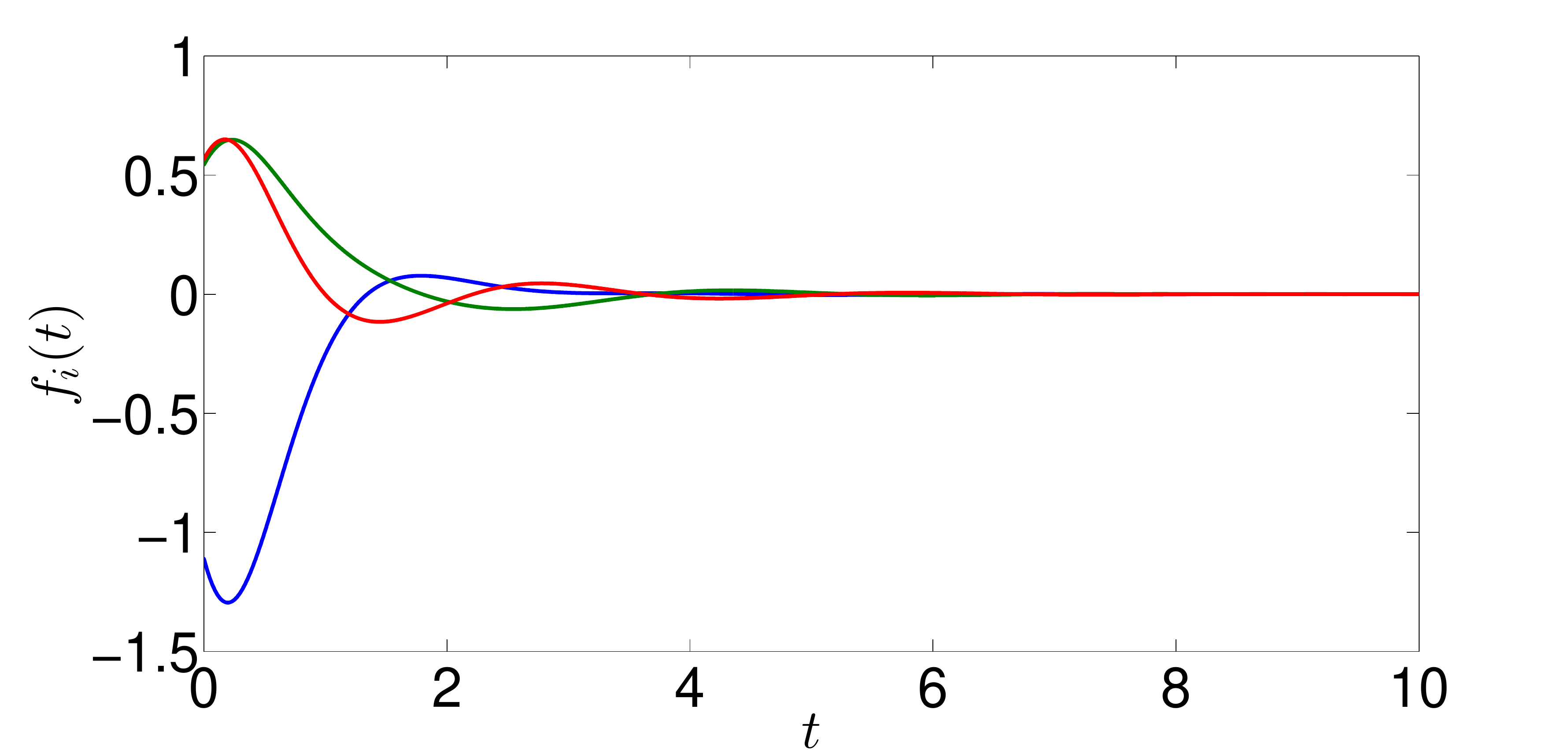}\label{fig:nu05mu04optctrls}}
	 \caption{Controlled steady state of the KS equation for $\nu = 0.5$, $\mu=0.4$ (\ref{fig:nu03contr}) and controls applied: (\ref{fig:nu05mu04ctrls}) equidistant and (\ref{fig:nu05mu04optctrls}) optimal.}
	 \label{fig:nu05mu04ss}
\end{figure}

A more detailed comparison of the energy required to control different solutions using equidistant actuators or
optimally computed positions as described above, is provided in Figures~\ref{fig:nu03steadystate} and~\ref{fig:nu05mu04ss}.
Figure~\ref{fig:nu03steadystate} shows the stabilisation of a nonuniform steady state for the KS equation, $\nu=0.3$ and $\mu=0$,
while Figure~\ref{fig:nu05mu04ss} shows analogous results but for the electrified problem with parameters $\nu=0.5$
and $\mu=0.4$ (in both cases dispersion is absent, $\delta=0$). Panel (a) shows the spatiotemporal evolution to the
desired state in the presence of controls, while panels (b) and (c) depict the evolution of the control amplitudes (there are three controls
in each case) for equidistant or optimally positioned actuators, respectively.
The results show that the amplitudes of optimally placed controls decay to zero faster than those of the equidistantly placed ones.

\subsubsection{Optimal control of travelling waves}
We also performed similar numerical experiments to find the optimal position of the control actuators when stabilising travelling waves.  We found that in most cases, we cannot do better than equidistant controls.

We believe that this is due to the following reasons. First, the length of the domain needed for the existence of (unstable) travelling waves is long, 
and therefore the number of unstable modes (and hence of the number of controls) is large (e.g., in the example of Fig.~\ref{fig:delta0} we are using $m=21$ controls); thus, shifting the position of the controls in a relatively large domain  should not have a big effect on their amplitudes. Second, 
solitary pulses on long domains necessarily have large flat regions which are susceptible to linear instabilities leading to the nonlinear wavy perturbations seen in
panels (b) and (c) in Figure~\ref{fig:compareControls}. Iit is interesting to note that there are 10 wavy structures corresponding to the number of linearly unstable modes;
thus, we expect optimality when the controls are approximately equally spaced thus guaranteeing  one control under each wavy structure.
Shifting the controls can introduce instability and nonlinear growth to a different state.

\section{Feedback and optimal control for coupled Kuramoto-Sivashinsky equations}\label{sec:coupled}

In Sections \ref{sec:stabilize}-\ref{sec:optimization} we studied analytically and computationally the feedback control
and optimal control problems for the generalised Kuramoto-Sivashinsky equation. In applications, systems of equations emerge with two or more
nonlinear coupled PDEs, and this section is concerned with the control of such systems.
As an example we refer to the system of two coupled KS equations that arises in the weakly 
nonlinear asymptotic analysis of a three layer flow
of immiscible viscous fluids stratified in a channel and driven by gravity and/or a stream wise
pressure gradient - see~\cite{Papaefthymiou2013}. The fully coupled system is a challenging PDE problem and 
questions such as existence and uniqueness of solutions, steady states and bifurcation theory are still poorly understood. Such problems are currently under investigations and our findings will be reported elsewhere.
In this Section we consider the problem of feedback and of optimal control for a system of KS equations that are coupled only through the second derivatives. Such a coupling is special but can arise in the application of three-layer flow. More generally, the nonlinearities are
also coupled and in fact the nonlinear flux functions can generically have real or complex eigenvalues implying hyperbolic elliptic
transitions thus complicating the analysis significantly; see \cite{Papaefthymiou2015} for a detailed study of such effects.

In what follows we consider the following coupled system of Kuramoto-Sivashinsky equations
\begin{equation}\label{System}
\left\{
\begin{array}{rcl}
u_{1,t} & = & -\nu u_{1,xxxx} - u_{1,xx} - u_1 u_{1,x}  - \alpha_1 u_{2,xx} \\
u_{2,t} & = & -\nu u_{2,xxxx} - u_{2,xx} - u_2 u_{2,x}  - \alpha_2 u_{1,xx} .
\end{array}
\right.
\end{equation}
We consider the equations in the interval $(0,2\pi)$ with periodic boundary conditions and initial conditions $u_1(x,0) = u_{10}(x)$ and $u_2(x,0) = u_{20}(x)$ and $u_{10}, \ u_{20} \in \dotH{2}(0,2\pi)$.

We can prove, using the background flow method~\cite{Collet1993,Nicolaenko1985,Tseluiko2007} that the solutions to the system are bounded:
\begin{prop}\label{boundsSystem}
Assume that  $u_{10}, \ u_{20} \in \dotH{2}(0,2\pi)$. Then there exists a constant $C = C(\nu, \alpha_1,\alpha_2)$ such that
\begin{equation}\label{boundSystem}
\|u_1\|_{L^2} + \|u_2\|_{L^2} \leq C.
\end{equation}
Similar bounds can be obtained for the $H^1$- and $H^2$-norm, and we can therefore conclude that the solutions to system~\eqref{System} are in $L^\infty(0,2\pi)$.
\end{prop}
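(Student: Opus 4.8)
\textit{Proof sketch.} The plan is to adapt the background-flow energy method used for the scalar Kuramoto--Sivashinsky equation in \cite{Collet1993,Nicolaenko1985,Tseluiko2007} to the coupled system \eqref{System}. First note that, testing each equation against the constant function and using periodicity, the spatial means of $u_1$ and $u_2$ are conserved, so $u_1(\cdot,t),u_2(\cdot,t)\in\dotH{2}(0,2\pi)$ for all $t>0$. Fix a smooth $2\pi$-periodic, mean-zero background function $\phi$ whose derivative $\phi_x$ is bounded below by a large constant $M>0$ outside a narrow interval, the parameters $M$ and the width of that interval being chosen, as a function of $\nu$, $\alpha_1$ and $\alpha_2$, exactly as in the scalar analysis. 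Set $w_i = u_i-\phi$ for $i=1,2$ and consider the energy $E(t)=\tfrac12\bigl(\|w_1\|^2+\lambda\|w_2\|^2\bigr)$, where $\lambda>0$ is a weight (taking $\lambda=1$ is enough here).

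Differentiating $E$ along \eqref{System}, i.e. testing equation $i$ against $w_i=u_i-\phi$ and integrating by parts, the cubic terms $\int_0^{2\pi} u_i^2 u_{i,x}\,dx$ vanish, and what survives from the scalar part of each equation is precisely the combination $-\nu\|u_{i,xx}\|^2+\|u_{i,x}\|^2-\tfrac12\int_0^{2\pi} u_i^2\phi_x\,dx$ together with terms linear in $u_i$ with bounded ($\phi$-dependent) coefficients. By the choice of $\phi$, this reproduces the scalar estimates of \cite{Collet1993,Nicolaenko1985,Tseluiko2007} and yields, for suitable positive constants $c_1,c_2,c_3$ depending only on $\nu$ (and on $\alpha_1,\alpha_2$ through $M$), a bound of the form
\[
\frac{d}{dt}\bigl(\|w_1\|^2+\lambda\|w_2\|^2\bigr)\;\le\;-c_1\bigl(\|w_1\|^2+\lambda\|w_2\|^2\bigr)-c_2\bigl(\|u_{1,xx}\|^2+\|u_{2,xx}\|^2\bigr)+c_3 .
\]

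The only genuinely new contributions are the coupling terms $-\alpha_1\int_0^{2\pi} u_{2,xx}w_1\,dx$ and $-\lambda\alpha_2\int_0^{2\pi} u_{1,xx}w_2\,dx$. Integrating by parts, the first equals $\alpha_1\int_0^{2\pi} u_{1,x}u_{2,x}\,dx+\alpha_1\int_0^{2\pi} u_2\phi_{xx}\,dx$ and the second $\lambda\alpha_2\int_0^{2\pi} u_{1,x}u_{2,x}\,dx+\lambda\alpha_2\int_0^{2\pi} u_1\phi_{xx}\,dx$. The $\phi_{xx}$ pieces are bounded by $\tfrac{c_1}{4}\bigl(\|u_1\|^2+\|u_2\|^2\bigr)+C$ since $\phi$ is fixed with bounded derivatives, and the cross term is controlled via Young's inequality and the interpolation inequality $\|u_{i,x}\|^2\le\varepsilon\|u_{i,xx}\|^2+C_\varepsilon\|u_i\|^2$, giving $\bigl|(\alpha_1+\lambda\alpha_2)\int_0^{2\pi} u_{1,x}u_{2,x}\,dx\bigr|\le\tfrac{c_2}{2}\bigl(\|u_{1,xx}\|^2+\|u_{2,xx}\|^2\bigr)+C$. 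The slope $M$ of $\phi$ is taken large enough (in terms of $\alpha_1,\alpha_2,\nu$) that the resulting $\|u_i\|^2$ contributions are reabsorbed into the $-c_1\|w_i\|^2$ terms after using $\|u_i\|^2\le 2\|w_i\|^2+2\|\phi\|^2$. Collecting everything gives $\frac{d}{dt}E\le-c_1E+c_4$, and Gronwall's inequality yields \eqref{boundSystem} with $C=C(\nu,\alpha_1,\alpha_2)$. The $H^1$ and $H^2$ bounds then follow by bootstrapping: the displayed inequality also controls $\int_t^{t+1}\bigl(\|u_{1,xx}\|^2+\|u_{2,xx}\|^2\bigr)\,ds$ uniformly in $t$; testing equation $i$ against $-u_{i,xx}$ and then against $u_{i,xxxx}$, estimating $\int_0^{2\pi} u_i u_{i,x}u_{i,xxxx}\,dx$ by the Gagliardo--Nirenberg and Agmon inequalities with the $L^2$ bound already in hand (the linear coupling terms being absorbed directly by Young's inequality), and applying the uniform Gronwall lemma gives uniform bounds on $\|u_{i,x}\|$ and $\|u_{i,xx}\|$. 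Since $H^1(0,2\pi)\hookrightarrow L^\infty(0,2\pi)$ in one dimension, we conclude $u_1,u_2\in L^\infty(0,2\pi)$. \qed

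I expect the main obstacle to be the scalar part of the argument, namely the precise construction of the background profile $\phi$ and the verification that the contribution of its negative-slope interval is dominated with enough dissipation left to spare; this is the technical heart of the estimates in \cite{Collet1993,Nicolaenko1985,Tseluiko2007}, and here it must be carried out with $M$ and the interval width tuned so as to also control the linear coupling. The coupling itself, acting linearly and only through the second derivatives, introduces no essential difficulty beyond this bookkeeping.
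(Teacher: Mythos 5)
The paper does not actually give a proof of this proposition (it states only that the result follows from the background flow method of \cite{Collet1993,Nicolaenko1985,Tseluiko2007}, with the details deferred to another publication), and your sketch follows precisely that announced route: the scalar background-profile coercivity estimate plus Young/interpolation control of the linear second-derivative coupling, with the profile's slope tuned to absorb the extra $\|u_i\|^2$ terms. Your treatment of the cross terms and the bootstrapping to $H^1$, $H^2$ and hence $L^\infty$ is sound, and you correctly identify the construction of the background profile as the genuine technical core, so the proposal is consistent with the paper's intended argument as far as it is described.
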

The proof is not included here due to space constraints and will appear elsewhere.
A similar result can be proved for the case when the coupling comes only through the fourth order derivatives 
(i.e. there is a non-diagonal negative definite fourth order viscosity matrix), and we believe that similar results can also be proved for the fully coupled system
with non-diagonal second as well fourth order viscosity matrices.

\subsection{Feedback control for the coupled KS equations}

Since Equations \eqref{System} are coupled linearly, analogous results to the ones presented earlier
for the single KS are obtained. First, we can prove that it is possible to stabilise any steady state solution 
(either the zero solution or any nontrivial steady state) for this system. 
We proceed in the same way as for the single KS equation and write the controlled system 
(for completeness we have also included coupling through the fourth order derivatives)
\begin{equation}\label{ControlledSystem}
\left\{\begin{array}{rcl}
 u_{1,t} & = & -\nu u_{1,xxxx} - u_{1,xx} - u_1 u_{1,x} - \alpha_1 u_{2,xx} + \sum_{j_1=1}^m \delta(x-x_{j_1})f_{j_1}(t),\\
 u_{2,t} & = & -\nu u_{2,xxxx} - u_{2,xx} - u_2 u_{2,x} - \alpha_2 u_{1,xx} + \sum_{j_2=1}^m \delta(x-x_{j_2})f_{j_2}(t) .
\end{array}\right.
\end{equation}
Defining
\begin{equation}\label{galerkinsystem}
U(x,t) = \left[\begin{array}{c} u_1(x,t)\\ u_2(x,t)\end{array}\right] = \sum_{n=1}^\infty\left[\begin{array}{c}  u_{1n}^s(t)\\ u_{2n}^s(t)\end{array}\right] \sin(nx) + \sum_{n = 0}^{\infty}\left[\begin{array}{c} u_{1n}^c(t)\\ u_{2n}^c(t)\end{array}\right] \cos(nx),
\end{equation}
and taking the inner product with the functions $\frac{1}{\sqrt{2\pi}}$, $\frac{\sin(nx)}{\sqrt{\pi}}$ and $\frac{\cos(nx)}{\sqrt{\pi}}$ 
yields the following infinite system of ODEs 
\begin{equation}\label{ODEsystemSystem}
\left\{
\begin{array}{rclr}
\dot{u}_{in}^s &=& \left(-\nu n^4 +  n^2\right) u_{in}^s + \alpha_i n^2 u_{jn}^s + g_{in}^s + \sum_{j_i = 1}^m b_{j_in}^sf_{j_i}(t) & n = 1,\dots,\infty, \\
\dot{u}_{in}^c &=& \left(-\nu n^4 +  n^2\right) u_{in}^c + \alpha_i n^2 u_{jn}^c + g_n^c + \sum_{j_i = 1}^m b_{j_in}^cf_{j_i}(t) & n = 0,\dots,\infty, 
\end{array}\right.
\end{equation}
where $i,j=1,2$, $i\neq j$, and the functions $b$ and $g$ are defined as in the single KS case.
We truncate the system at $N$ modes and define
$$ z^U = \left[u_{10}^c \ u_{11}^s \ u_{11}^c \ \cdots\ u_{1N}^s \ u_{1N}^c \ u_{20}^c \ u_{21}^s \ u_{21}^c \ \cdots\ u_{2N}^s \ u_{2N}^c\right]^T,$$ 
$$G =  \left[0 \ g_{11}^s \ g_{11}^c \ \cdots \ g_{1N}^s \ g_{1N}^c \ 0 \ g_{21}^s \ g_{21}^c \ \cdots \ g_{2N}^s \ g_{2N}^c\right]^T,$$ 
$$F = \left[f_{11}(t) \ f_{12}(t) \ \cdots \ f_{1m}(t) \ f_{21}(t) \ f_{22}(t) \ \cdots \ f_{2m}(t)\right]^T.$$
Next we write
\[
 A = \left[\begin{array}{cc} A_0 & A_1 \\ A_2 & A_0\end{array}\right], \quad \quad B = \left[\begin{array}{c}B_1 \\ B_2\end{array}\right],
\]
where $A_0 = \operatorname{diag}(0, -\nu + 1, -\nu + 1, \cdots ,-\nu n^4 +n^2, -\nu n^4 +n^2,\cdots)$, $A_i = \operatorname{diag}(0, -\beta_i + \alpha_i, -\beta_i + \alpha_i, \cdots ,-\beta_i n^4 +\alpha_i n^2, -\beta_i n^4 + \alpha_i n^2,\cdots)$
and 
\[
B_i = \left[\begin{array}{cccc}
b_{1_i0}^c & b_{2_i0}^c & \cdots & b_{m_i0}^c \\
b_{1_i1}^s & b_{2_i1}^s & \cdots & b_{m_i1}^s \\
b_{1_i1}^c & b_{2_i1}^c & \cdots & b_{m_i1}^c \\
\vdots & \vdots & \cdots & \vdots
\end{array}\right],\]
for $i = 1,2$. Hence the infinite system of ODEs can be written as
\begin{equation}
\dot{z}^U = Az^U + G + BF.
\end{equation}
We can prove a result similar to Proposition~\ref{prop1}.
\begin{prop}\label{prop4}
Let $\bar{U} = \left[\begin{array}{c} \bar{u}_1 \\ \bar{u}_2\end{array}\right]$ be a nontrivial (unstable) 
steady state solution of \eqref{System}, and let $l = l_1+l_2$ be the 
number of unstable eigenvalues of the linearised system, i.e.
$l_1^2 < \frac{1+\sqrt{\alpha_1\alpha_2}}{\nu} < (l_1+1)^2 \textrm{ and } l_2^2 < \frac{1-\sqrt{\alpha_1\alpha_2}}{\nu} < (l_2+1)^2$. 
If \  $m =2(l+1)$ and  there exists a matrix $K$ 
such that all of the eigenvalues of the matrix $A + BK$ have negative real part, then the state feedback controls \begin{equation}\label{controlC} \left[f_{11}(t) \ f_{12}(t) \ \cdots \ f_{1m}(t) \ f_{21}(t) \ f_{22}(t) \ \cdots \ f_{2m}(t)\right]^T = F = K(z^U-z^{\bar{U}}),\end{equation} stabilise this nontrivial steady state solution of system \eqref{System}.
\end{prop}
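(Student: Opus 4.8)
The plan is to follow the proof of Proposition~\ref{prop1} line by line, the only genuinely new feature being the (generically non-symmetric) linear coupling between the two equations. Writing $U=\bar U+V$ with $V=(v_1,v_2)^T$, substituting into the controlled system~\eqref{ControlledSystem} and using that $\bar U$ is a steady state of~\eqref{System}, one obtains the perturbation system
\begin{equation*}
\left\{\begin{array}{rcl}
v_{1,t}&=&-\nu v_{1,xxxx}-v_{1,xx}-v_1v_{1,x}-(\bar u_1 v_1)_x-\alpha_1 v_{2,xx}+\displaystyle\sum_{j_1=1}^m\delta(x-x_{j_1})f_{j_1}(t),\\
v_{2,t}&=&-\nu v_{2,xxxx}-v_{2,xx}-v_2v_{2,x}-(\bar u_2 v_2)_x-\alpha_2 v_{1,xx}+\displaystyle\sum_{j_2=1}^m\delta(x-x_{j_2})f_{j_2}(t),
\end{array}\right.
\end{equation*}
and it suffices to prove that the feedback~\eqref{controlC} makes $V\equiv 0$ a stable equilibrium of this system; the corresponding nontrivial steady state $\bar U$ of~\eqref{System} is then stabilised.

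As in Proposition~\ref{prop1} the argument splits into a linear and a nonlinear part. First, after a Galerkin truncation at $N$ modes the linearisation of the perturbation system with the controls~\eqref{controlC} reads $\dot z^V=(A+BK)z^V$; by hypothesis the spectrum of $A+BK$ lies in the open left half-plane (such a $K$ exists by the Kalman-rank argument of Remark~\ref{rmk5}, since separating stable and unstable modes makes the block of $B$ acting on the unstable modes square and of full rank when the $b_i$ are point actuators), so the zero solution of the truncated linear closed loop is exponentially stable. Denoting by $\mathcal{A}$ the linear operator sending $V$ to $(-\nu v_{1,xxxx}-v_{1,xx}-\alpha_1 v_{2,xx},\,-\nu v_{2,xxxx}-v_{2,xx}-\alpha_2 v_{1,xx})$ augmented with the feedback terms, exponential stability furnishes a constant $a>0$ with $(\mathcal{A}V,V)\le -a\|V\|^2$, where $\|V\|^2=\|v_1\|^2+\|v_2\|^2$ (the coercivity point is discussed below).

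For the nonlinear step, set $\mathcal{E}(V)=\tfrac12\int_0^{2\pi}(v_1^2+v_2^2)\,dx$, multiply the $i$-th perturbation equation by $v_i$, add, and integrate over $(0,2\pi)$. The cubic contributions $\int v_i^2 v_{i,x}\,dx$ vanish by periodicity; the coupling terms $-\alpha_1\int v_{2,xx}v_1$ and $-\alpha_2\int v_{1,xx}v_2$ have been absorbed into $(\mathcal{A}V,V)$; and, exactly as in the scalar case, each transport term satisfies $-\int_0^{2\pi}(\bar u_i v_i)_x v_i\,dx=-\tfrac12\int_0^{2\pi}\bar u_{i,x}v_i^2\,dx\le -\tfrac{1}{2}\,\inf\bar u_{i,x}\,\|v_i\|^2$, the quantities $\inf\bar u_{i,x}$ being finite by Proposition~\ref{boundsSystem} together with the Sobolev embedding $H^2\hookrightarrow C^1$. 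Summing gives $\tfrac12\frac{d}{dt}\|V\|^2\le -\bigl(a+\tfrac12\min_{i}\inf\bar u_{i,x}\bigr)\|V\|^2$, so if the eigenvalues placed in $A+BK$ are chosen (as in Proposition~\ref{prop1}, by moving the closed-loop spectrum sufficiently far to the left) so that $2a+\min_i\inf\bar u_{i,x}\ge 0$, then $\frac{d}{dt}\mathcal{E}(V)\le 0$ and $\mathcal{E}$ is a Lyapunov functional for the perturbation system, which proves stability of $V\equiv 0$ and hence of $\bar U$.

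I expect the main point requiring more care than in Proposition~\ref{prop1} to be the coercivity estimate $(\mathcal{A}V,V)\le -a\|V\|^2$: because $\alpha_1\neq\alpha_2$ in general, the diffusion-plus-coupling operator is not self-adjoint in $L^2(0,2\pi)\times L^2(0,2\pi)$. When $\alpha_1\alpha_2>0$ this is fixed by working in the equivalent weighted inner product $\langle(v_1,v_2),(w_1,w_2)\rangle=\int v_1w_1\,dx+\tfrac{\alpha_1}{\alpha_2}\int v_2w_2\,dx$, under which the coupling (which on Fourier mode $n$ is the off-diagonal matrix with entries $\alpha_1 n^2$ and $\alpha_2 n^2$) becomes symmetric and the natural Lyapunov functional is $\tfrac12\int(v_1^2+\tfrac{\alpha_1}{\alpha_2}v_2^2)\,dx$; in the general case one instead appeals to the solution $P$ of the matrix Lyapunov equation $(A+BK)^TP+P(A+BK)=-I$ for the truncated closed loop, which supplies an equivalent energy norm with the required dissipativity. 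The only other obstacle — the control-theoretic existence of a $K$ with the prescribed spectrum given the available $2m$ point actuators — is routine and handled verbatim as in Remark~\ref{rmk5}. Apart from these two points, the proof is identical to that of Proposition~\ref{prop1}.
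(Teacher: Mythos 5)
Your proposal is correct and takes essentially the same route as the paper, whose entire proof of this proposition is the single remark that it ``follows the same argument as the proof of Proposition~\ref{prop1}''; your write-up simply carries out that adaptation (perturbation system, closed-loop exponential stability of the Galerkin truncation under the hypothesis on $A+BK$, and the $L^2$ Lyapunov estimate with the transport terms bounded via $\inf\bar{u}_{i,x}$ and the choice $2a+\min_i\inf\bar{u}_{i,x}\geq 0$). Your additional care about the coercivity estimate $(\mathcal{A}V,V)\le -a\|V\|^2$ for the non-self-adjoint coupled operator (the weighted inner product when $\alpha_1\alpha_2>0$, or a Lyapunov-equation norm in general) addresses a point the paper glosses over even in the scalar case, and is a sensible strengthening rather than a different approach.
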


The proof of this result follows the same argument as the proof of Proposition~\ref{prop1}.

We present in Figures \ref{fig:system_zero_sol} and \ref{fig:system_steady_sol} the numerical results of the stabilisation of the zero solution and a steady state solution, respectively, for system \eqref{System} with $\nu = 0.5$, $\alpha_1 = 0.8$ and $\alpha_2 = 0.5$. We used $m=4$ equidistant controls to control each solution, corresponding physically to applying $4$ controls in each wall. Upper panels correspond to the uncontrolled solution, and lower panels correspond to the stabilised solution.
We clearly observe in both figures the stabilisation of the desired steady state.

\begin{figure}[h!]
	\centering \includegraphics[width =\linewidth]{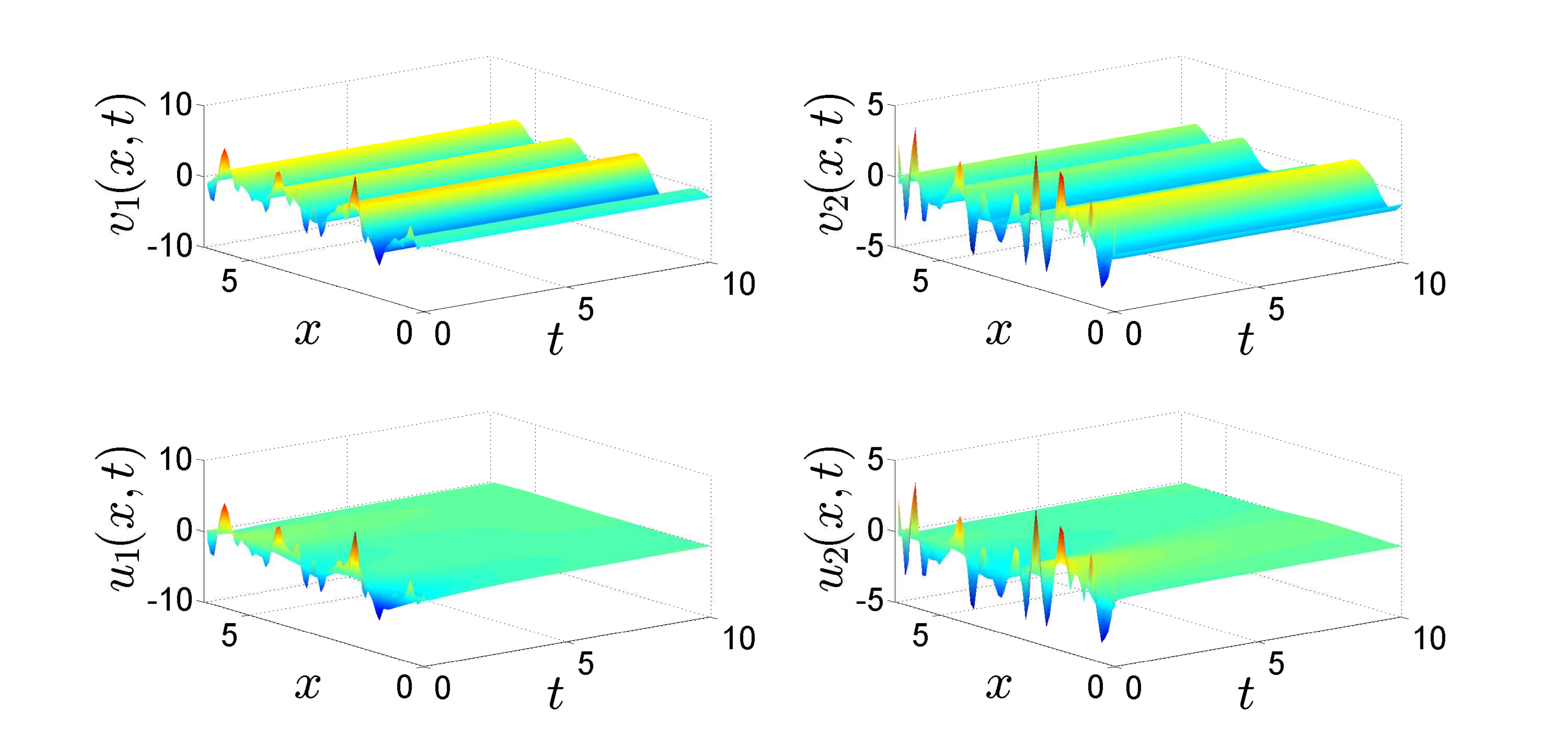}
	 \caption{Uncontrolled solution ($v_i$, $i=1,2$) and controlled zero solution ($u_i$, $i=1,2$) of the system of coupled KS equations for $\nu = 0.5$, $\alpha_1=0.8$ and $\alpha_2 = 0.5$.}
	 \label{fig:system_zero_sol}
\end{figure}

\begin{figure}[h!]
	\centering \includegraphics[width =\linewidth]{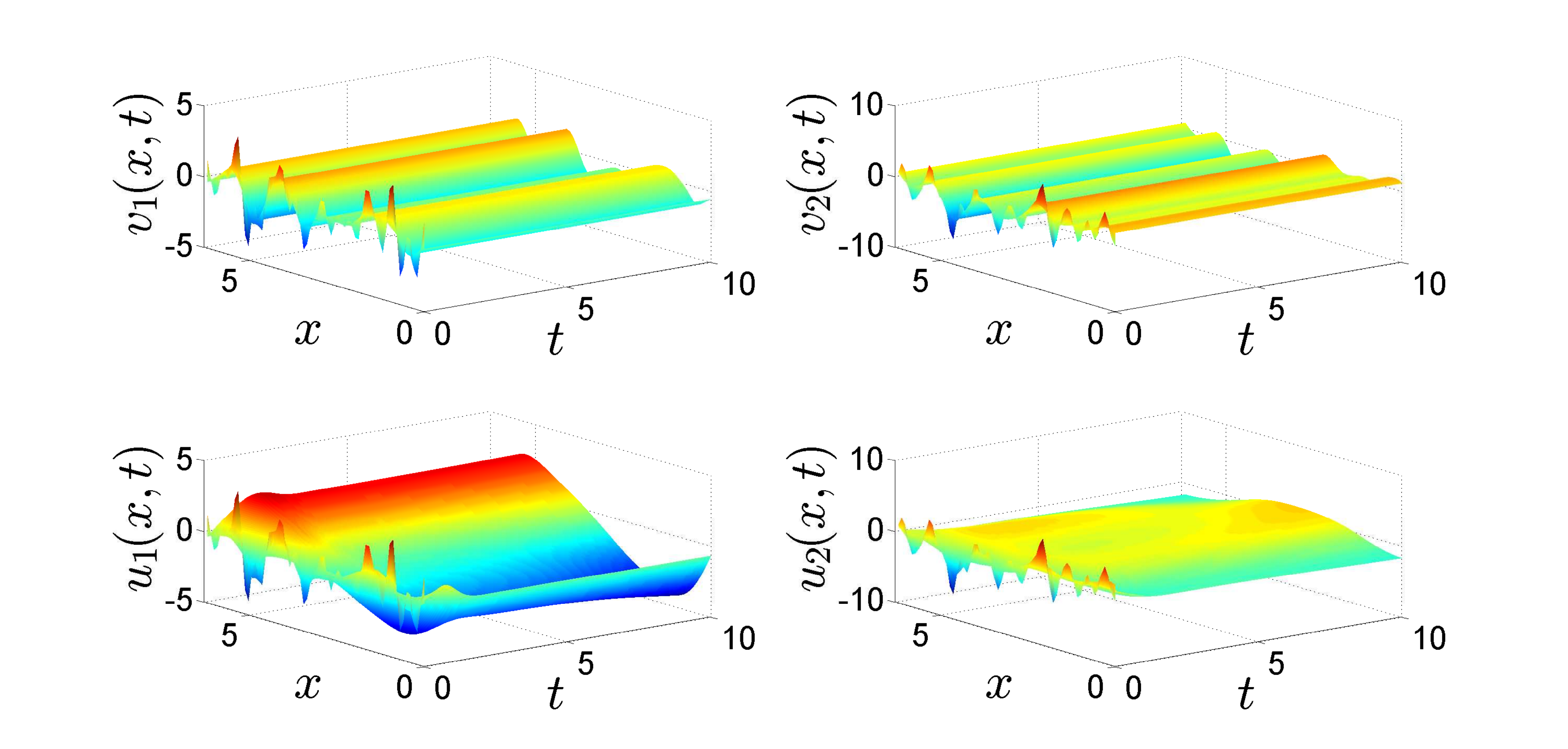}
	 \caption{Uncontrolled solution ($v_i$, $i=1,2$) and stabilised steady state solution ($u_i$, $i=1,2$) of the system of coupled KS equations for $\nu = 0.5$, $\alpha_1=0.8$ and $\alpha_2 = 0.5$.}
	 \label{fig:system_steady_sol}
\end{figure}

%
%
\subsection{Optimal control of the system of coupled KS equations}
We consider next the problem of controlling an arbitrary steady state 
$\bar{U} = \left[\begin{array}{c}\bar{u}_1 \\ \bar{u}_2\end{array}\right]$ in an optimal way. 
We introduce the cost functional
\begin{equation}\label{costfuncSystem}
\begin{array}{ll}
\cost\left(U, F \right) = &\displaystyle{\frac{1}{2}\int_0^T \left(\| u_1(\cdot,t)-\bar{u}_1\|_{L^2}^2 + \| u_2(\cdot,t)-\bar{u}_2\|_{L^2}^2\right) \ dt} \\
& \displaystyle{ +\frac{1}{2} \left(\| u_1(\cdot, T) - \bar{u}_1\|_{L^2}^2  + \| u_2(\cdot, T) - \bar{u}_2\|_{L^2}^2 \right) }\\
& \displaystyle{ +\frac{\gamma}{2} \int_0^T \left(\|f_1(x,t)\|_{L^2}^2 + \|f_2(x,t)\|_{L^2}^2\right)  \ dt}.
\end{array}
\end{equation}
The optimisation problem that we have to solve takes the form
\begin{subequations}\label{optmrobSystem}
\begin{align}
\label{costfunctionalSystem} \mathrm{minimise} \quad&  \cost\left(U,F\right) \\
	\label{stateequationSystem1} \mathrm{subject \  to} \quad &  \displaystyle{ u_{1,t} + \nu u_{1,xxxx} +  u_{1,xx} + u_1u_{1,x} + \alpha_1 u_{2,xx}=  f_1(x,t),} \\
	\label{stateequationSystem2}  &  \displaystyle{ u_{2,t} + \nu u_{2,xxxx} +  u_{2,xx} + u_2u_{2,x} + \alpha_2 u_{1,xx}=  f_2(x,t),} \\
\label{initialconditionSystem} &u_i(x,0) = u_{0,i}(x),\,\,\, i = 1,2, \\
\label{boundaryconditionsSystem} &\frac{\partial^j u_i}{\partial x^j}(x+2\pi) = \frac{\partial^j u_i}{\partial x^j}(x), \quad j = 0,1,2,3, \,\,\,i = 1,2,\\
\label{restriction1System}& f_i \in F_{ad}, \,\,\,i = 1,2.
\end{align}	
\end{subequations}
Here, $u_{0,i}\in \dotH{2}(0,2\pi)$ and $F_{ad}$ is a bounded, closed and convex subset of $L^2((0,2\pi)\times(0,T))$.

We can prove the following theorem.
\begin{thm}\label{theorem2}
If $F_{ad}\subset L^2((0,T);\dot{L}^2(0,2\pi))$, the optimal control problem \eqref{costfunctionalSystem}-\eqref{restriction1System} 
has at least one optimal control $F^* = \left[\begin{array}{c}f_1^* \\ f_2^*\end{array}\right]$ with associated 
optimal state $U^* = \left[\begin{array}{c}u_1^* \\ u_2^*\end{array}\right]$.
\end{thm}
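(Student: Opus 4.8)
\noindent\emph{Proof proposal for Theorem~\ref{theorem2}.}
The plan is to repeat the direct-method argument used in the proof of Theorem~\ref{theorem}, now in the two-component setting, the key observation being that the coupling in \eqref{System} is \emph{linear} and therefore introduces no new nonlinear terms. First I would set $X = \left(H^1(0,T;\dotH{2}(0,2\pi))\right)^2\times F_{ad}^2$ and define the constraint map
\[
e(U,F) = \left[\begin{array}{c}
u_{1,t}+\nu u_{1,xxxx}+u_{1,xx}+u_1u_{1,x}+\alpha_1 u_{2,xx}-f_1 \\
u_{2,t}+\nu u_{2,xxxx}+u_{2,xx}+u_2u_{2,x}+\alpha_2 u_{1,xx}-f_2 \\
u_1(\cdot,0)-u_{0,1} \\
u_2(\cdot,0)-u_{0,2}
\end{array}\right],
\]
so that \eqref{optmrobSystem} is the problem of minimising $\cost(U,F)$ over those $(U,F)\in X$ with $e(U,F)=0$ and periodic boundary conditions. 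As in the scalar case $\cost$ is nonnegative, and since $F_{ad}$ is bounded, along the constraint set the states are bounded in $H^1(0,T;\dotH{2}(0,2\pi))$ by the energy estimates underlying Proposition~\ref{boundsSystem}; hence $\cost(U,F)\to\infty$ as $\|(U,F)\|_X\to\infty$ along admissible pairs, the infimum is finite, and any minimising sequence $(U^n,F^n)$ is bounded in $X$. Passing to a subsequence we get $(U^n,F^n)\rightharpoonup(U^*,F^*)$ weakly in $X$, so all linear functionals of $(U^n,F^n)$ — in particular $u^n_{i,t}$, $u^n_{i,xxxx}$, $u^n_{i,xx}$, the coupling terms $\alpha_i u^n_{j,xx}$, and $f^n_i$ — converge weakly to the corresponding quantities for $(U^*,F^*)$.

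Next I would pass to the limit in \eqref{stateequationSystem1}--\eqref{stateequationSystem2} tested against $\varphi\in X$ with $\varphi(\cdot,T)=0$. The only genuinely nonlinear contributions are the two Burgers terms $u^n_1u^n_{1,x}$ and $u^n_2u^n_{2,x}$, and each is treated exactly as in the proof of Theorem~\ref{theorem}: since $u^*_i(\cdot,t)\in\dotH{2}(0,2\pi)\hookrightarrow C([0,2\pi])$ one has $u^*_i\varphi\in H^1(0,T;L^2(0,2\pi))$, giving $\int_0^T\int_0^{2\pi}(u^n_i-u^*_i)_x\,u^*_i\varphi\,dx\,dt\to 0$; and since $H^2$ embeds compactly in $L^2$ while $\|u^n_{i,x}\|$ is bounded (by the $H^1$/$H^2$ analogue of Proposition~\ref{boundsSystem}), one has $\int_0^T\int_0^{2\pi}(u^n_i-u^*_i)\,u^n_{i,x}\varphi\,dx\,dt\to 0$. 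Adding and subtracting the cross term yields weak convergence of $u^n_iu^n_{i,x}$ to $u^*_iu^*_{i,x}$ for $i=1,2$. Because $u^*_i$ and $u^*_{i,x}$ are continuous on $[0,2\pi]\times[0,T]$, $U^*$ satisfies the periodic boundary conditions and the initial conditions, and combining the weak limits above shows $(U^*,F^*)$ is a weak solution of the coupled state equations; hence it is admissible.

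Finally, optimality of $(U^*,F^*)$ follows from the weak lower semicontinuity of $\cost$ in \eqref{costfuncSystem}: every term there is the square of a norm of a quantity depending linearly on $(U,F)$ (the tracking terms on $U$, the penalty on $F$), hence convex and continuous, hence weakly lower semicontinuous, so $\cost(U^*,F^*)\le\liminf_{n}\cost(U^n,F^n)=\inf\cost$, which forces equality (cf. the proof of Theorem~4.15 in~\cite{Troltzsch2010}). I expect the only real work to be the derivative-dependent, non-Lipschitz nonlinearity — exactly the obstacle already met in Theorem~\ref{theorem} — which is overcome by the compact embedding $H^2\hookrightarrow\hookrightarrow L^2$ together with uniform bounds on $\|u^n_{i,x}\|$; the linear coupling through the second derivatives is harmless. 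One point to flag is that the bounds in Proposition~\ref{boundsSystem} are stated asymptotically ($\limsup_{t\to\infty}$), whereas here one needs them (in $L^2$, $H^1$ and $H^2$) on the finite interval $[0,T]$; this follows from the same background-flow energy estimates but should be recorded explicitly. \qed
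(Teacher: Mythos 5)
Your proposal follows essentially the same route as the paper's own (sketched) proof: the same product state space $X$, the same constraint map $e$, weak convergence of a minimising sequence, the identical treatment of the two Burgers nonlinearities via the compact embedding $H^2\hookrightarrow L^2$ together with the bounds of Proposition~\ref{boundsSystem}, and weak lower semicontinuity of $\cost$ to conclude. It is correct, and your closing remark about needing the bounds on $[0,T]$ rather than asymptotically is a reasonable point of care but not a departure from the paper's argument.
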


{\bf Sketch of the proof.}
The proof follows the same steps as that of Theorem \ref{theorem} for the single Kuramoto-Sivashinsky equation. 
For the coupled system, we need to consider a different state space $X = \left(H^1(0,T;\dotH{2}(0,2\pi))\right)^2\times\left( F_{ad}\right)^2$, 
and redefine $e(\cdot,\cdot;\cdot,\cdot)$:
\begin{equation}\label{e(u,f)system} e(u_1,u_2;f_1,f_2) = \left[\begin{array}{c} u_{1,t} + \nu u_{1,xxxx} +  u_{1,xx} + u_1u_{1,x} + \alpha_1 u_{2,xx}-  f_1(x,t) \\
	 u_{2,t} + \nu u_{2,xxxx} +  u_{2,xx} + u_2u_{2,x} + \alpha_2 u_{1,xx}-  f_2(x,t) \\
u_1(\cdot,0)-u_{0,1}(x) \\ u_2(\cdot,0)-u_{0,2}(x)\end{array}\right].\end{equation}
The rest of the proof follows Theorem \ref{theorem}, but accounting for the fact that
for every $t\in[0,T]$ we have $U^*(\cdot,t) \in \left(\dotH{2}(0,2\pi)\right)^2$, 
and then $U^* (\cdot,t)\in \left(C([0,2\pi])\right)^2$ and therefore if $\varphi_i \in X$, $(u_i^*\varphi_i)(\cdot,t) \in L^2([0,2\pi])$ for $ i=1,2$.

Finally, we also need the estimates in Proposition~\ref{boundsSystem} to establish that $\|u_{i,x}^n\|_{L^2}$ is bounded, and since $H^2$ is compactly embedded in $L^2$, we deduce that
\begin{equation}\label{estimateSystem}
\int_0^T\int_0^{2\pi} (u_i^n-u_i^*)u_{i,x}^n\varphi_i\ dx \ dt \leq \|u_i^n-u_i^*\|_{L^2}\|u_{i,x}^n\|_{L^2}\|\varphi_i\|_{L^\infty} \longrightarrow_{n\rightarrow\infty} 0, \  \forall \varphi_i \in \dotH{2}(\Omega).
\end{equation}

\section{Conclusions}\label{sec:conclusion}

In this paper we studied the problem of controlling and stabilising solutions to the generalised Kuramoto-Sivashinsky equation. We studied both feedback and optimal control problems. For the optimal control problem, we proved existence of an optimal control and we investigated numerically the problem of optimal actuator placement. By extending earlier work by Christofides \emph{et al.} \cite{Armaou2000a,Armaou2000,Christofides1998,Christofides2000} we showed rigorously that we can control arbitrary nontrivial steady states of the Kuramoto-Sivashinsky equation, including travelling wave solutions, using only a finite number of point actuators. The number of point actuators needed is related to the number of unstable modes. We also investigated the robustness of the controllers with respect to changing the parameters in the equation. In particular, we showed that our proposed control methodology can be used in the presence of uncertainty. Our results can be extended to coupled systems of Kuramoto-Sivashinsky equations. 
 
We have not discussed about the practical implementation of the control methodologies studied in this paper. For example, we have assumed that complete information about the solution of the KS equation is available. This, and other issues related to the implementation of the control algorithm are discussed in \cite{Alice}.

We considered the case where the entire solution to the KS equation is available to us. It is straightforward, however, to apply our results to the case when only a finite number of observations is available, using the techniques presented in~\cite{Armaou2000a}. For brevity of exposition we have not done this for the KS equation. In a forthcoming paper \cite{Alice} we study the feedback control problem when only a finite number of observations is available to us for a more complicated PDE, the quasilinear Benney equation arising in falling film problems
(see \cite{Kalliadasis2012} and references therein).

There are several directions in which the results presented in this paper can be extended. First, the KS equation is a simplified model for thin film flows obtained using weakly nonlinear analysis and valid close to criticality~\cite{Craster2009,Kalliadasis2012}. We can apply the control methodologies studied in this paper to other simplified models that are closer to the full 2D Navier-Stokes dynamics, such as the Benney equation and the weighted residual model. These equations are more complicated since they are quasilinear and they can include additional degrees of freedom. It is possible to extend our results to such models~\cite{Alice}. Furthermore, the control of unstable travelling waves for the KS equation with dispersion can be analysed in detail, leading to an efficient and robust algorithm. This problem is studied further in~\cite{Gomes2015}. Finally, our techniques can be extended so that they apply to the noisy KS equation. Given that the noise itself can sometimes stabilise linearly unstable solutions~\cite{Pradas2012,Pradas2011}, the interaction between noise and controls can lead to very interesting dynamic phenomena. We think that this is a particularly interesting direction for further research, since the noisy KS equation is closely related to the noisy KPZ equation,  which is a universal model for weakly asymmetric processes~\cite{Hairer2013}. 

\section*{Acknowledgments}
We acknowledge financial support from Imperial College through a Roth PhD studentship, Engineering and Physical Sciences Research Council of the UK through Grants No. EP/H034587, EP/J009636, EP/K041134/1,  EP/L020564, EP/L025159/1 and EP/L024926.
We thank Professor Serafim Kalliadasis, Dr Marc Pradas and Dr Alice Thompson for useful discussions.

\nocite{Antoniades2001,Borzi2012,Rudin1974,Christofides2000a,Gong2014,Chu2001,Chu1986,Dubljevic2010,Frisch1986,Kunisch1999,Papageorgiou1993,Papageorgiou1991,Pradas2012,Pradas2011,Tseluiko2007,Zabczyk1992,Volkwein2000}

\bibliographystyle{plain}
 \bibliography{biblio}

\end{document}